\newtheorem{thm}{Theorem}[section]
\newtheorem{prop}[thm]{Proposition}
\newtheorem{cor}[thm]{Corollary}
\newtheorem{lemma}[thm]{Lemma}
\theoremstyle{definition}
\newtheorem{example}[thm]{Example}
\newtheorem{defn}[thm]{Definition}
\newtheorem{quest}[thm]{Question}
\theoremstyle{remark}
\newtheorem{remark}[thm]{Remark}
\newcommand{\Z}{\mathbb{Z}}
\newcommand{\x}{\mathbf{x}}
\newcommand{\C}{\mathbb{C}}
\newcommand{\QSym}{\mathrm{QSym}}
\newcommand{\KP}{K_P(\x)}
\newcommand{\KPomega}{K_{(P, \omega)}(\x)}
\newcommand{\KPmin}{\widetilde{K}_P(\x)}
\newcommand{\KQ}{K_Q(\x)}
\newcommand{\des}{\mathrm{des}}
\newcommand{\Pomega}{(P, \omega)}
\newcommand{\lm}{\lambda / \mu}
\newcommand{\vc}[1]{\rule[-3ex]{0pt}{0pt}\rule[4ex]{0pt}{0pt} \raisebox{\dimexpr-.5\height+.5\ht\strutbox\relax}{#1}}
\DeclareMathOperator{\rev}{rev}
\DeclareMathOperator{\co}{co}
\DeclareMathOperator{\sh}{sh}
\DeclareMathOperator{\wt}{wt}
\DeclareMathOperator{\height}{ht}
\DeclareMathOperator{\type}{type}
\DeclareMathOperator{\amb}{amb}
\DeclareMathOperator{\sign}{sign}
\DeclareMathOperator{\comp}{wt}
\newcommand{\V}{V}
\DeclareMathOperator{\MinOne}{\eta}
\DeclareMathOperator{\MaxOne}{\tilde{\eta}}
\title{$P$-Partitions and Quasisymmetric Power Sums}
\author{Ricky Ini Liu}
\address{Department of Mathematics, North Carolina State University, Raleigh, NC}
\email{riliu@ncsu.edu}
\author{Michael Weselcouch}
\address{Department of Mathematics, North Carolina State University, Raleigh, NC}
\email{mweselc@ncsu.edu}
\thanks{R. I. Liu and M. Weselcouch were partially supported by National Science Foundation grant DMS-1700302.}
\date{\today}
\begin{document}
\maketitle

\begin{abstract}
The \emph{$\Pomega$-partition generating function} of a labeled poset $\Pomega$ is a quasisymmetric function enumerating certain order-preserving maps from $P$ to $\Z^+$.  We study the expansion of this generating function in the recently introduced type 1 quasisymmetric power sum basis $\{\psi_\alpha\}$.  Using this expansion, we show that connected, naturally labeled posets have irreducible $P$-partition generating functions.  We also show that series-parallel posets are uniquely determined by their partition generating functions.  We conclude by giving a combinatorial interpretation for the coefficients of the $\psi_\alpha$-expansion of the $\Pomega$-partition generating function akin to the Murnaghan-Nakayama rule.
\end{abstract}

\section{Introduction}
For a finite poset $P=(P, \prec)$ with a labeling $\omega\colon P \to [n]$, the \emph{$\Pomega$-partition generating function} $\KPomega$ is a quasisymmetric function enumerating certain order-preserving maps from $P$ to $\Z^+$. In this paper, we study the expansion of $\KPomega$ in the type 1 quasisymmetric power sum basis  $\{\psi_\alpha\}$ introduced by Ballantine, Daugherty, Hicks, Mason, and Niese in \cite{BallantineEtc}. Such expansions have previously been considered in the naturally labeled case by Alexandersson and Sulzgruber in \cite{AlexanderssonSulzgruber}.

It is well known that the $\Pomega$-partition generating function of a disconnected poset is reducible since it can be expressed as a product over the connected components of $P$. McNamara and Ward \cite{McNamaraWard} asked whether the converse is true, namely whether the $\Pomega$-partition generating function of a connected poset is always irreducible in $\QSym$. We answer this question affirmatively in the naturally labeled case. In particular, we relate part of the $\psi_\alpha$-expansion of $\KP$ to certain \emph{zigzag labelings} of $P$, which exist only for connected posets. We then use this to deduce that connected, naturally labeled posets have irreducible $P$-partition generating functions in Section 4. It remains open whether this is true for all connected labeled posets $(P, \omega)$. (See \cite{LamPylyavskyy} for some discussion of irreducibility in $\QSym$.)

An important object in this study is a pair of linear functionals $\MinOne$ and $\MaxOne$ on $\QSym$ (defined by the present authors in \cite{LiuWeselcouch}) that can be used to determine if a poset has exactly $1$ minimal or $1$ maximal element. These functionals also have the property that they send any reducible element of $\QSym$ to $0$. In Section $3$, we express $\MinOne$ and $\MaxOne$ in terms of the basis $\{\psi_\alpha\}$ and use this to describe the action of various involutions of $\QSym$ on this basis.

In \cite{LiuWeselcouch}, the present authors studied the question of when two distinct (naturally labeled) posets can have the same $P$-partition generating function.  This question has been studied extensively in the case of skew Schur functions \cite{BTvW, MvW, RSvW}, by McNamara and Ward \cite{McNamaraWard} for general labeled posets, and by Hasebe and Tsujie \cite{HasebeTsujie} for rooted trees (and more generally $(N, \bowtie)$-free posets). Hasebe and Tsujie ask in 
\cite{HasebeTsujie} whether series-parallel posets can be distinguished by their partition generating functions. 
(A poset is \emph{series-parallel} if it 
can be built from one-element posets using ordinal sum and disjoint union operations, or alternatively, if it is $N$-free.)
We use the previously stated result on irreducibility to give a complete, affirmative answer to this question in Section 5.

Alexandersson and Sulzgruber \cite{AlexanderssonSulzgruber} show that when $\Pomega$ is naturally labeled, $\KP = \sum_\alpha c_{\alpha}\psi_\alpha$ is $\psi$-positive, and they give a combinatorial interpretation for the coefficients $c_{\alpha}$.  In Section 6, we extend this work and give a (signed) combinatorial interpretation for the coefficients in the $\psi_{\alpha}$-expansion of $\KPomega$ for any labeled poset. This interpretation generalizes the Murnaghan-Nakayama rule for computing the expansion of a skew Schur function in terms of power sum symmetric functions.

In summary, in Section 2 we will give some preliminary information; in Section 3 we describe $\MinOne$, $\MaxOne$, and various involutions of $\QSym$ in terms of the type 1 quasisymmetric power sum basis $\{\psi_\alpha\}$; in Section 4 we show that connected, naturally labeled posets have irreducible $P$-partition generating functions; in Section 5 we show that series-parallel posets are uniquely determined by their partition generating function; and in Section 6 we give a combinatorial interpretation for the coefficients in the $\psi_{\alpha}$-expansion of $\KPomega$ akin to the Murnaghan-Nakayama rule.



\section{Preliminaries}
We begin with some preliminaries about compositions, posets, quasisymmetric functions, and Hopf algebras.
For more information, see \cite{GrinbergReiner, LiuWeselcouch, McNamaraWard, Stanley2}. 

\subsection{Compositions and partitions}
A \textit{composition} $\alpha= (\alpha_1, \alpha_2, \dots, \alpha_k)$ of $n$ is a finite sequence of positive integers summing to $n$.  (When it is clear from context, we will remove the parentheses and commas when writing a composition.) 
A \emph{weak composition} of $n$ is a finite sequence of nonnegative integers summing to $n$.  A \emph{partition} of $n$ is a composition of $n$ whose parts are in weakly decreasing order.  Given a composition $\alpha$ and partition $\lambda$, we write $\alpha \sim \lambda$ if $\lambda$ is formed by rearranging the parts of $\alpha$ into weakly decreasing order.  We use the notation $\alpha \vDash n$ if $\alpha$ is a composition of $n$ and $\lambda \vdash n$ if $\lambda$ is a partition of $n$.

We will use the shorthand $1^k$ to denote the composition $(\underbrace{1, 1, \dots, 1}_k)$.
The \emph{reverse of $\alpha$}, denoted $\alpha^{\textrm{rev}}$, is the composition formed by reversing the order of $\alpha$.  The \emph{length of $\alpha$}, denoted $l(\alpha)$, is the number of parts of $\alpha$.

The compositions of $n$ are in bijection with the subsets of $[n-1]$ in the following way: for any composition $\alpha$, define \[D(\alpha) = \{\alpha_1, \quad \alpha_1 + \alpha_2, \quad \dots, \quad \alpha_1 + \alpha_2 +\dots +\alpha_{k-1}\} \subseteq [n-1].\]
Likewise, for any subset $S = \{s_1, s_2, \dots, s_{k-1}\}\subseteq [n-1]$ with $s_1<s_2<\dots < s_{k-1}$, we can define the composition
\[\co(S) = (s_1, \quad s_2-s_1, \quad s_3-s_2, \quad \dots, \quad s_{k-1}-s_{k-2}, \quad n-s_{k-1}).\]

Given two nonempty compositions $\alpha = (\alpha_1, \alpha_2, \dots, \alpha_k)$ and $\beta = (\beta_1, \beta_2, \dots, \beta_l)$, their \emph{concatenation} is
\[\alpha \cdot \beta = (\alpha_1, \alpha_2, \dots, \alpha_k, \beta_1, \beta_2, \dots, \beta_l),\]
and their \emph{near-concatenation} is
\[ \alpha \odot \beta = (\alpha_1, \alpha_2, \dots, \alpha_k + \beta_1, \beta_2, \dots, \beta_m). \]
Observe that if $\alpha \vDash n$ and $\beta \vDash m$, then both $\alpha \cdot \beta \vDash (n+m)$ and $\alpha \odot \beta \vDash (n+m)$.

For a partition $\lambda$, define $z_\lambda = 1^{m_1}m_1! \cdot 2^{m_2}m_2!\cdots $, with $m_i$ being the multiplicity of $i$ in $\lambda$.  This number is the size of the centralizer of a group element $g \in S_n$ whose cycle type is $\lambda$ \cite{Stanley2}.  For a composition $\alpha$ where $\alpha \sim \lambda$, we define $z_\alpha =z_\lambda$.


\subsubsection{Refinement}

If $\alpha$ and $\beta$ are both compositions of $n$, then we say that $\alpha$ \emph{refines} $\beta$ (equivalently, $\beta$ \emph{coarsens} $\alpha$), denoted $\alpha \preceq \beta$, if
\[ \beta = (\alpha_1 + \cdots + \alpha_{i_1}, \quad \alpha_{i_1+1} + \cdots + \alpha_{i_1+i_2}, \quad \dots,\quad  \alpha_{i_1+ \cdots + i_{k-1} + 1} + \cdots + \alpha_{i_1+\cdots+i_k}),\]
for some $i_1, i_2, \dots, i_k$ summing to $l(\alpha)$. Equivalently, $\alpha \preceq \beta$ if and only if $D(\beta) \subseteq D(\alpha)$.

The following function will be important in a number of combinatorial formulas.
\begin{defn}
Given a refinement $\alpha$ of $\beta$, let $\alpha^{(i)}$ be the composition consisting of the parts of $\alpha$ that combine to form $\beta_i$, so $\alpha^{(i)} \vDash \beta_i$.  We then define
\[ \pi(\alpha) = \prod_{i=1}^{l(\alpha)}\sum_{j=1}^i\alpha_j \text{\hspace{.5 cm} and \hspace{.5 cm}} \pi(\alpha, \beta) = \prod_{i=1}^{l(\beta)}\pi(\alpha^{(i)}). \]
\end{defn}

Observe that $\pi(\alpha) = \pi(\alpha, (n))$.

\begin{example}
If $\alpha = (1,1,4,2,1)$ and $\beta = (2,7)$, then $\alpha \preceq \beta$, and 
\[\pi(\alpha, \beta) = (1\cdot(1+1))(4\cdot(4+2)\cdot(4+2+1))=336.\]
\end{example}

This function has the following combinatorial interpretation. Given compositions $\alpha, \beta \vDash n$ with $\alpha \preceq \beta$, let
\begin{align*}
D(\alpha) &= \{s_1, s_2, \dots, s_{l-1}\},& 0 &=s_0 <s_1 < s_2 < \dots < s_{l-1}<s_l=n,\\
D(\beta) &= \{s_{i_1}, s_{i_2}, \dots, s_{i_{k-1}}\}, & 0 &= i_0 < i_1 < i_2 < \cdots < i_{k-1} < i_k = l.
\end{align*}
Let $\Pi(\alpha,\beta)$ (denoted $\textrm{Cons}_{\alpha \preceq \beta}$ in \cite{BallantineEtc}) be the set of permutations $\sigma \in S_n$ such that  $\sigma_j \leq \sigma_{s_i}$ for all $s_{i_{m-1}} < j \leq s_i \leq s_{i_{m}}$, where $1 \leq m \leq k$. Alternatively, these are the permutations that satisfy $\sigma_{s_i} = \max_{j \in (s_{i_{m-1}},  s_i]} \sigma_j$ whenever $i_{m-1} < i \leq i_m$. In other words, suppose that the letters in $\sigma$ are broken into blocks with sizes given by the parts of $\beta$ and then into subblocks with sizes given by the parts of $\alpha$. Then the maximum of any subblock must occur at the end of that subblock, and these maximum entries must increase within any block.

\begin{example}
Let $\alpha = (1,1,4,2,1)$ and $\beta = (2,7)$. It follows that $D(\alpha) = \{1, 2, 6, 8 \}$ and $D(\beta) = \{2 \}$.  Then (using bars to indicate the composition $\beta$ and spaces to indicate the refinement given by $\alpha$)
\[\sigma = 2\,3\,|\,6417\,58\,9 \in \Pi(\alpha, \beta),\]
but 
\[\sigma' = 1\,4\,|\,2738\,56\,9 \notin \Pi(\alpha, \beta)\]
because $\sigma_8 \neq \max\{\sigma_3, \dots, \sigma_8\}$.
\end{example}

The following result is Lemma 3.7 from \cite{BallantineEtc}; we sketch a proof here for completeness.

\begin{lemma}\label{counting pi alpha beta}
For compositions $\alpha \preceq \beta$,
\[|\Pi(\alpha,\beta)| = \frac{n!}{\pi(\alpha,\beta)}.\]
\end{lemma}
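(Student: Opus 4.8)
The plan is to factor the count over the blocks determined by $\beta$ and then reduce each block to a clean statement about records (left-to-right maxima) in a random permutation.

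First I would observe that the defining condition of $\Pi(\alpha, \beta)$ never compares entries lying in different $\beta$-blocks: within each block $(s_{i_{m-1}}, s_{i_m}]$ the requirement only involves the relative order of the entries there, namely that each subblock maximum sits at the end of its subblock and that these maxima increase across the block. Hence a permutation in $\Pi(\alpha,\beta)$ is specified by first distributing the values $[n]$ among the $k$ blocks (in $\binom{n}{\beta_1,\dots,\beta_k}$ ways) and then independently arranging each block in a valid relative order. Writing $N(\gamma)$ for the number of valid relative orders of a single block whose subblock sizes are the parts of a composition $\gamma \vDash m$, this gives
\[
|\Pi(\alpha,\beta)| = \frac{n!}{\beta_1!\cdots\beta_k!}\prod_{i=1}^k N\!\left(\alpha^{(i)}\right),
\]
so it suffices to prove $N(\gamma) = m!/\pi(\gamma)$ for a single block.

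For the single-block statement I would first rewrite the condition in an equivalent form. Letting $p_j = \gamma_1 + \cdots + \gamma_j$ denote the $j$-th subblock boundary, the requirement ``each subblock maximum lies at its end and the maxima increase'' is equivalent to demanding $\sigma_{p_j} = \max\{\sigma_1, \dots, \sigma_{p_j}\}$ for every $j$; that is, each position $p_j$ is a left-to-right maximum of $\sigma$. The key input is the classical fact that for a uniformly random permutation of $[m]$, the events that position $i$ is a left-to-right maximum (for $i=1,\dots,m$) are mutually independent, and position $i$ is a left-to-right maximum with probability $1/i$. This is seen most easily via the independent uniform relative ranks $R_i \in \{1, \dots, i\}$, since position $i$ is a left-to-right maximum precisely when $R_i = i$. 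Therefore the positions $p_1, \dots, p_r$ are all left-to-right maxima with probability $\prod_{j=1}^r 1/p_j = 1/\pi(\gamma)$, and multiplying by $m!$ yields $N(\gamma) = m!/\pi(\gamma)$.

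Substituting back, the $\beta_i!$ factors cancel and we obtain $|\Pi(\alpha,\beta)| = n!/\prod_i \pi(\alpha^{(i)}) = n!/\pi(\alpha,\beta)$, as desired. The only genuinely nontrivial step is the single-block count; there the main thing to get right is the equivalence between the stated subblock condition and the left-to-right-maximum condition, after which the independence of records finishes the argument immediately. An alternative to the probabilistic step would be a short induction on the number of subblocks $r$, peeling off the last subblock whose maximum is forced to be the global maximum at position $m$, but I expect the records formulation to be the most transparent.
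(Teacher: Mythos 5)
Your proof is correct and takes essentially the same approach as the paper's: both compute, for a uniformly random permutation, the probability that each subblock boundary carries the maximum of its block-prefix (namely $1/(s_i - s_{i_{m-1}})$) and multiply these probabilities using independence. The only difference is presentational --- you first factor over $\beta$-blocks with a multinomial coefficient and then justify the independence of the record events via relative ranks, whereas the paper asserts this independence directly on the full permutation.
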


\begin{proof}
Choose $i_{m-1} < i \leq i_{m}$. For a random permutation $\sigma \in S_n$, the probability that $\sigma_{s_i} = \max_{j \in (s_{i_{m-1}},  s_i]} \sigma_j$ is \[\frac{1}{s_i - s_{i_{m-1}}} = \frac{1}{\alpha^{(m)}_1 + \cdots + \alpha^{(m)}_{i-i_{m-1}}}.\]
It is easy to check that these probabilities are independent, and taking the product over all $i$ gives $\frac{1}{\pi(\alpha,\beta)}$.
\end{proof}



Using Lemma~\ref{counting pi alpha beta}, one can prove the following identity, which we will need later.

\begin{lemma}\label{pi sum}  Let $\alpha$ be a composition of $n$.  Then
\[ \frac{n!}{\pi(\alpha^{\textrm{rev}})} = \sum_{\beta \succeq \alpha} (-1)^{l(\alpha)-l(\beta)} \frac{n!}{\pi(\alpha, \beta)} .\]
\end{lemma}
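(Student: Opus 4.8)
The plan is to prove the identity combinatorially, interpreting every term through Lemma~\ref{counting pi alpha beta}. First I would rewrite the right-hand side as $\sum_{\beta \succeq \alpha} (-1)^{l(\alpha) - l(\beta)} |\Pi(\alpha, \beta)|$, and the left-hand side, using $\pi(\alpha^{\mathrm{rev}}) = \pi(\alpha^{\mathrm{rev}}, (n))$, as $|\Pi(\alpha^{\mathrm{rev}}, (n))|$. The key structural observation is that, for every $\beta \succeq \alpha$, the set $\Pi(\alpha,\beta)$ sits inside $\Pi(\alpha,\alpha)$: a permutation lies in $\Pi(\alpha,\alpha)$ exactly when the maximum of each $\alpha$-subblock occurs at its right end, and $\Pi(\alpha,\beta)$ is cut out from $\Pi(\alpha,\alpha)$ by the extra requirement that these subblock maxima increase within each $\beta$-block. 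Writing $m_1, \dots, m_l$ for the subblock maxima of $\sigma \in \Pi(\alpha,\alpha)$ and letting $\mathrm{Des}(\sigma) \subseteq [l-1]$ record the dividers where $m_i > m_{i+1}$, the increasing condition on each $\beta$-block says precisely that every non-retained divider is an ascent; equivalently $\mathrm{Des}(\sigma) \subseteq D(\beta)$, identifying $D(\beta)$ with its set of divider indices.

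With this in hand, I would identify coarsenings $\beta \succeq \alpha$ with the subsets $T = D(\beta) \subseteq [l-1]$ of retained dividers, so that $l(\beta) = |T| + 1$ and the sign becomes $(-1)^{(l-1) - |T|}$. Substituting and exchanging the order of summation,
\[ \sum_{\beta \succeq \alpha} (-1)^{l(\alpha)-l(\beta)} |\Pi(\alpha,\beta)| = \sum_{\sigma \in \Pi(\alpha,\alpha)} \sum_{T \supseteq \mathrm{Des}(\sigma)} (-1)^{(l-1)-|T|}. \]
The inner alternating sum over all $T$ with $\mathrm{Des}(\sigma) \subseteq T \subseteq [l-1]$ is a standard alternating binomial sum, equal to $1$ if $\mathrm{Des}(\sigma) = [l-1]$ and $0$ otherwise. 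Hence the entire right-hand side counts exactly those $\sigma \in \Pi(\alpha,\alpha)$ whose subblock maxima strictly decrease, $m_1 > m_2 > \dots > m_l$.

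Finally I would match this count with the left-hand side by a block-reversal bijection. Given such a $\sigma$, reverse the order of its $\alpha$-subblocks while keeping each subblock's internal arrangement intact; the result is a single-blocked permutation whose subblocks have sizes $\alpha_l, \dots, \alpha_1$, with each subblock maximum still at its right end and now strictly increasing across subblocks, that is, an element of $\Pi(\alpha^{\mathrm{rev}}, (n))$, and this map is evidently a bijection. Therefore the right-hand side equals $|\Pi(\alpha^{\mathrm{rev}}, (n))| = n!/\pi(\alpha^{\mathrm{rev}})$ by Lemma~\ref{counting pi alpha beta}, as desired. I expect the main obstacle to be the bookkeeping in the structural step: verifying carefully that the ``increasing within each block'' condition defining $\Pi(\alpha,\beta)$ translates exactly into the containment $\mathrm{Des}(\sigma) \subseteq D(\beta)$, so that the inclusion–exclusion collapses cleanly, and keeping the exponent $l(\alpha) - l(\beta) = (l-1) - |T|$ consistent throughout.
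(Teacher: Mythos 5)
Your proof is correct, and it shares the paper's overall skeleton: both arguments interpret every term of the identity through Lemma~\ref{counting pi alpha beta} and reduce the alternating sum to counting the permutations in $\Pi(\alpha,\alpha)$ whose subblock maxima strictly decrease. The differences lie in the two supporting steps. For the cancellation, the paper constructs an explicit sign-reversing involution on pairs $(\sigma,\beta)$: it finds the smallest $j$ with $\sigma_{s_j} < \sigma_{s_{j+1}}$ and toggles $s_j$ in or out of $D(\beta)$, leaving exactly the strictly decreasing permutations (paired with $\beta = \alpha$) as fixed points. Your route --- exchanging the order of summation and collapsing the alternating sum over supersets of $\mathrm{Des}(\sigma)$ via the binomial identity --- achieves exactly the same collapse; the two mechanisms are interchangeable, the involution being the bijective counterpart of your inclusion--exclusion. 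For the left-hand side, the paper re-runs the probabilistic argument from the proof of Lemma~\ref{counting pi alpha beta} to show directly that $n!/\pi(\alpha^{\mathrm{rev}})$ counts the surviving permutations, whereas you invoke that lemma as a black box to write the left side as $|\Pi(\alpha^{\mathrm{rev}},(n))|$ and then supply an explicit block-reversal bijection. Your version has the modest advantage of never repeating the probability computation, at the cost of the extra bookkeeping (which you handle correctly) identifying coarsenings $\beta \succeq \alpha$ with subsets of divider indices and verifying that membership in $\Pi(\alpha,\beta)$ is equivalent to $\mathrm{Des}(\sigma) \subseteq D(\beta)$. Both arguments are complete.
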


\begin{proof}
Let $l(\alpha) = l$ and $D(\alpha) = \{s_1, s_2, \dots, s_{l-1}\}$ with $0 = s_0< s_1 < s_2 < \dots < s_{l-1}< s_l = n$.

By an argument similar to the one used in the proof of Lemma \ref{counting pi alpha beta}, the left hand side counts the number of permutations $\sigma$ such that for all $i = 1, \dots, l$,  $\max\{\sigma_{s_{i-1}+1}, \dots, \sigma_n\} = \sigma_{s_i}$.  In other words, out of the last $\alpha_i + \dots +\alpha_l$ values of $\sigma$, the largest of those values is in position $s_i$.  Observe that these are the permutations in $\Pi(\alpha, \alpha)$ such that $\sigma_{s_1} > \sigma_{s_2} > \dots > \sigma_{s_l}$.

The right hand side counts pairs $(\sigma, \beta)$ such that $\sigma \in \Pi(\alpha, \beta) \subseteq \Pi(\alpha, \alpha)$ with a sign depending on the length of $\beta$.  We will describe a sign-reversing involution whose fixed points are the permutations that are counted by the left hand side.
Let $j$ be the smallest positive number such that $\sigma_{s_{j}} < \sigma_{s_{j+1}}$. Then $\sigma \in \Pi(\alpha, \beta')$, where
\[D(\beta') = \begin{cases}
D(\beta) \cup \{s_j\} & \text{if $s_j \notin D(\beta)$,}\\
D(\beta) \setminus \{s_j\} & \text{if $s_j \in D(\beta)$.}
\end{cases}\]
%
In either case, $l(\beta') = l(\beta) \pm 1$, so $(\sigma, \beta) \mapsto (\sigma, \beta')$ is a sign-reversing involution. The fixed points are exactly the permutations in $\Pi(\alpha, \alpha)$ where $\sigma_{s_1} > \sigma_{s_2} > \dots > \sigma_{s_l}$, which are counted by the left hand side.
\end{proof}

\subsubsection{Shuffles}

An important notion when working with compositions and quasisymmetric functions is that of shuffles.

\begin{defn}
Let $\alpha = (\alpha_1, \alpha_2, \dots, \alpha_k)$ and $\beta = (\beta_1, \beta_2, \dots, \beta_l)$. 
The \emph{multiset of shuffles} $\alpha \shuffle \beta$ consists of all $\binom{k+l}{k}$ compositions of length $k+l$ that contain $\alpha$ and $\beta$ as disjoint subsequences, counted with multiplicity.
\end{defn}

\begin{example}
Let $\alpha = (4, 2, 1)$ and $\beta = (3, 1)$, then the multiset of shuffles of $\alpha$ and $\beta$ is:
\[ (4, 2, 1, 3, 1), (4, 2, 3, 1, 1), (4, 2, 3, 1, 1), (4, 3, 2, 1, 1), (4, 3, 2, 1, 1), \]
\[(4, 3, 1, 2, 1), (3, 4, 2, 1, 1), (3, 4, 2, 1, 1), (3, 4, 1, 2, 1), (3, 1, 4, 2, 1). \]
\end{example}

The shuffle operator $\shuffle$ is commutative, meaning $\alpha \shuffle \beta = \beta \shuffle \alpha$ as multisets.

\subsection{Posets and $P$-partitions}
Let $P = (P, \prec)$ be a finite poset of size $n$. A \emph{labeling} of $P$ is a bijection $\omega \colon P \to \{1, 2, \dots, n\}$.
\begin{defn} For a labeled poset $(P, \omega)$, a \emph{$(P, \omega)$-partition} is a map $\theta \colon P \to \Z^+$ that satisfies the following:
\begin{enumerate}[(a)]
\item If $x \preceq y$, then $\theta(x) \leq \theta(y)$.
\item If $x \preceq y$ and $\omega(x) > \omega(y)$, then $\theta(x) < \theta(y)$.
\end{enumerate}
\end{defn}


\begin{defn} The \emph{$(P, \omega)$-partition generating function} $K_{(P, \omega)}(x_1, x_2, \dots)$ for a labeled poset $(P, \omega)$ is given by 
\[K_{(P, \omega)}(\mathbf x) = K_{(P, \omega)}(x_1, x_2, \dots ) = \sum_{(P, \omega)\text{-partition } \theta}x_1^{|\theta^{-1}(1)|}x_2^{|\theta^{-1}(2)|}\cdots,\]
where the sum ranges over all $(P, \omega)$-partitions $\theta$.
\end{defn}
Note that $K_{(P, \omega)}(x_1, x_2, \dots )$ depends only on the relative order of $\omega(x)$ and $\omega(y)$ when $y$ covers $x$.  In the Hasse diagram of $\Pomega$, we will use a bold edge (or \emph{strict edge}) to represent when $x \prec y$ but $\omega(x) > \omega(y)$, while we will use a plain edge (or \emph{natural edge}) when $x \prec y$ and $\omega(x) < \omega(y)$.  We will replace the nodes of a labeled poset by circles with the label inside. 

\begin{example}
The following is a depiction of a labeled poset $(P, \omega)$ for $n=3$. Note that there is one strict edge and one natural edge.
 \begin{center}
\begin{tikzpicture}
 [auto,
 vertex/.style={circle,draw=black!100,fill=black!100,thick,inner sep=0pt,minimum size=1mm}]
\node (v1) [draw,circle,minimum size=.45cm,inner sep=0pt] at (0,0) {$3$};
\node (v2) [draw,circle,minimum size=.45cm,inner sep=0pt] at (1,1) {$2$};
\node (v3) [draw,circle,minimum size=.45cm,inner sep=0pt] at (2,0) {$1$};
\draw [very thick] [-] (v1) to (v2);
\draw [-] (v2) to (v3);
\end{tikzpicture}
\end{center} 
\end{example}
If $\omega$ is order-preserving, then $K_{(P, \omega)}(x_1, x_2, \dots )$ depends only on the structure of $P$. In this case, we call $P$ \emph{naturally labeled} and write $K_{P}(\mathbf x) = K_{(P, \omega)}(\mathbf x)$.

\begin{remark} \label{Disconnect posets}
If $P$ is a disconnected poset, then $K_{(P,\omega)}(\mathbf x)$ can be written as a product of $K_{(P_i, \omega_i)} (\mathbf x)$, where the $P_i$ are the connected components of $P$ (and $\omega_i$ is obtained by appropriately restricting $\omega$ to $P_i$).
\end{remark}


A \emph{linear extension} of a labeled poset $(P, \omega)$ 
is a permutation $\pi$ of $[n]$ that respects the relations in $(P, \omega)$, that is, if $x \preceq y$, then $\pi^{-1}(\omega(x)) \leq \pi^{-1}(\omega(y))$.  The set of all linear extensions of $(P, \omega)$ is denoted $\mathcal{L}\Pomega$. Note that $|\mathcal L\Pomega|$ is the coefficient of $x_1x_2\cdots x_n$ in $\KPomega$.

\subsection{Quasisymmetric Functions}
A \emph{quasisymmetric function} in the variables $x_1, x_2, \dots$ (with coefficients in $\C$) is a formal power series $f(\mathbf{x}) \in \C[[\mathbf{x}]]$ of bounded degree such that, for any composition $\alpha$, the coefficient of $x_1^{\alpha_1}x_2^{\alpha_2}\cdots x_k^{\alpha_k}$ equals the coefficient of $x_{i_1}^{\alpha_1}x_{i_2}^{\alpha_2}\cdots x_{i_k}^{\alpha_k}$ whenever $i_1 < i_2 < \dots < i_k$.  We denote the algebra of quasisymmetric functions by $\QSym = \bigoplus_{n \geq 0} \QSym_n$, graded by degree.

We will consider the following bases for $\QSym$: the monomial basis $\{M_\alpha\}$, the fundamental quasisymmetric function basis $\{L_\alpha\}$, and the (unnormalized) type 1 quasisymmetric power sum basis $\{\psi_{\alpha}\}$ introduced in \cite{BallantineEtc}. 

The \textit{monomial quasisymmetric function basis} $\{M_\alpha\}$, indexed by compositions $\alpha$, is given by 
\[M_\alpha = \sum_{1\leq i_1<i_2<\dots <i_k}  x_{i_1}^{\alpha_1}x_{i_2}^{\alpha_2}\cdots x_{i_k}^{\alpha_{k}}.\]

The \textit{fundamental quasisymmetric function basis} $\{L_\alpha\}$, also indexed by compositions $\alpha$, is given by
\[L_\alpha = \sum_{\substack{i_1 \leq \dots \leq i_n\\i_s < i_{s+1} \text{ if } s \in D(\alpha)}} x_{i_1}x_{i_2} \cdots x_{i_n}.\]
In terms of the monomial basis,
$L_\alpha = \sum_{\beta \preceq \alpha}M_\beta$,
where the sum runs over all refinements $\beta$ of $\alpha$.  
By M\"obius inversion, this implies that $M_\alpha = \sum_{\beta \preceq \alpha} (-1)^{l(\beta) - l(\alpha)} L_\beta$.

For any labeled poset $\Pomega$, $\KPomega$ is a quasisymmetric function, and we can express it in terms of the fundamental basis $\{L_\alpha\}$ using the linear extensions of $\Pomega$. For any linear extension $\pi \in \mathcal L\Pomega$, define the \emph{descent set} of $\pi$ to be $\des(\pi) = \{i \mid \pi(i) > \pi(i+1)\}$. Abbreviating $\co(\des(\pi))$ by $\co(\pi)$, we then have the following result.

\begin{thm}[\cite{Gessel, Stanley3}]\label{L Expansion}
Let $(P, \omega)$ be a labeled poset.  Then
\[ \KPomega = \sum_{\pi \in \mathcal{L}\Pomega}L_{\co(\pi)}.\]
\end{thm}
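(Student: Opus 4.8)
The plan is to establish the classical \emph{fundamental lemma of $P$-partitions}: the set of all $(P,\omega)$-partitions decomposes as a disjoint union, indexed by the linear extensions $\pi \in \mathcal{L}\Pomega$, of the maps ``compatible'' with the total order recorded by $\pi$. First I would view a linear extension concretely as a listing of the elements of $P$ as $p_1, p_2, \dots, p_n$ with $p_i \preceq p_j \Rightarrow i \le j$; reading off the labels gives the permutation $\pi$ with $\pi(i) = \omega(p_i)$, and a descent $i \in \des(\pi)$ is precisely the event $\omega(p_i) > \omega(p_{i+1})$. To this listing I attach the set of maps $\theta \colon P \to \Z^+$ with $\theta(p_1) \le \theta(p_2) \le \dots \le \theta(p_n)$ and with $\theta(p_i) < \theta(p_{i+1})$ whenever $i \in \des(\pi)$ (strict increases at non-descents being permitted but not required).

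The heart of the argument is to produce, for an arbitrary $(P,\omega)$-partition $\theta$, a linear extension with which it is compatible, and to show this extension is unique. For existence I would sort the elements of $P$ so that their $\theta$-values weakly increase, breaking ties among elements of equal $\theta$-value by increasing $\omega$-label. Two checks are then needed. First, this total order extends $\prec$: if $x \prec y$ then $\theta(x) \le \theta(y)$, and when $\theta(x) = \theta(y)$ condition (b) in the definition of a $(P,\omega)$-partition forbids $\omega(x) > \omega(y)$, so $\omega(x) < \omega(y)$ and the tie-break still places $x$ before $y$. Second, $\theta$ is compatible with the resulting chain: at a descent $i$, if $\theta(p_i) = \theta(p_{i+1})$ then the tie-break would force $\omega(p_i) < \omega(p_{i+1})$, contradicting $i \in \des(\pi)$; hence $\theta(p_i) < \theta(p_{i+1})$. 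For uniqueness, note that in any compatible listing the elements of a given $\theta$-value form a contiguous block inside which no strict increase --- and therefore no descent --- can occur, so the labels must increase across that block; thus the listing is forced to coincide with the sort-then-tie-break order and is completely determined by $\theta$.

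With the decomposition established, the remaining step is bookkeeping. For a fixed $\pi$, set $\alpha = \co(\pi)$, so that $D(\alpha) = \des(\pi)$. Summing the monomial $\prod_j x_j^{|\theta^{-1}(j)|} = x_{\theta(p_1)} \cdots x_{\theta(p_n)}$ over all $\theta$ compatible with $\pi$ reproduces, under the substitution $i_s = \theta(p_s)$, the defining sum of $L_{\co(\pi)}$ exactly. Summing over all $\pi \in \mathcal{L}\Pomega$ then yields $\KPomega = \sum_{\pi} L_{\co(\pi)}$. The one genuinely delicate point is the uniqueness/compatibility interplay in the middle paragraph: the convention of breaking ties by increasing label is precisely what makes the strict-edge condition (b) correspond to descents, and verifying that this pins down a single linear extension for each $\theta$ (no double-counting, nothing omitted) is the crux; the generating-function identity for a single chain and the final summation are then immediate.
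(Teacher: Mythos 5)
Your overall strategy is the right one: the paper itself gives no proof of this theorem (it cites Gessel and Stanley), and the cited argument is precisely the fundamental lemma of $\Pomega$-partitions that you set out to prove. The parts you do prove are proved correctly: the sort-by-$\theta$-value, break-ties-by-increasing-label construction, the check that it produces a linear extension with which $\theta$ is compatible, the uniqueness argument via contiguous blocks of constant $\theta$-value, and the final bookkeeping identifying the generating function of the maps compatible with a fixed $\pi$ with $L_{\co(\pi)}$ are all sound.

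However, there is a genuine gap: you never verify the reverse inclusion, namely that every map $\theta$ compatible with a linear extension $\pi$ (i.e., satisfying $\theta(p_1) \le \cdots \le \theta(p_n)$ with strict inequality at each descent) is itself a $\Pomega$-partition. Your existence-and-uniqueness argument shows only that the set of $\Pomega$-partitions sits inside the disjoint union $\bigsqcup_\pi A_\pi$ of the compatibility classes, with each partition in exactly one class; it does not rule out that some $A_\pi$ contains extraneous maps that fail to be $\Pomega$-partitions. Without this, what you have established yields only that $\sum_\pi L_{\co(\pi)}$ dominates $\KPomega$ coefficientwise, not equality, since every monomial of $\sum_\pi L_{\co(\pi)}$ must be accounted for by an actual $\Pomega$-partition. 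The missing step is short but not purely formal: given $\theta \in A_\pi$ and $x \prec y$ with $\omega(x) > \omega(y)$, write $x = p_i$ and $y = p_j$; since the listing is a linear extension, $i < j$, and since $\omega(p_i) > \omega(p_j)$, the labels cannot increase at every step from position $i$ to position $j$, so there is a descent $k$ with $i \le k < j$, whence compatibility gives $\theta(x) \le \theta(p_k) < \theta(p_{k+1}) \le \theta(y)$, verifying condition (b) of the definition (condition (a) follows directly from the weak chain of inequalities). Adding this observation completes your proof.
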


\subsubsection{Quasisymmetric power sums}
Formally, the \emph{type 1 quasisymmetric power sum basis}, as defined in \cite{BallantineEtc}, is the basis $\{\Psi_\alpha\}$ of $\QSym$ that satisfies $\langle \Psi_\alpha, \mathbf{\Psi}_\beta \rangle =z_\alpha\delta_{\alpha, \beta}$, where $\mathbf{\Psi}_\beta$ is the noncommutative power sum of the first kind (introduced in \cite{NSym}).
The type 1 quasisymmetric power sum basis refines the power sum symmetric functions (see Section \ref{Murnaghan-Nakayama rule section}) as
\[p_{\lambda} = \sum_{\alpha \sim \lambda}\Psi_\alpha,\]
where the sum runs over all compositions $\alpha$ that rearrange to the partition $\lambda$.
We will consider the unnormalized version of the type 1 quasisymmetric power sum basis with basis elements $\{\psi_\alpha\}$ given by 
$\psi_\alpha = \frac{\Psi_\alpha}{z_\alpha}$.
From now on, when we refer to the type 1 quasisymmetric power sum basis, we are referring to the unnormalized version $\psi_\alpha$ unless stated otherwise.

We can express $\psi_\alpha$ in terms of the monomial basis:

\[ \psi_\alpha= \sum_{\beta \succeq \alpha} \frac{1}{\pi(\alpha, \beta)}M_\beta, \]
where the sum runs over all coarsenings $\beta$ of $\alpha$. This was proven in \cite{BallantineEtc}, but for the purposes of this paper, this can be taken as the definition of $\psi_\alpha$.

For more information on the type 1 quasisymmetric power sum basis, see \cite{AlexanderssonSulzgruber, BallantineEtc}.   


\subsubsection{Length}
The type 1 quasisymmetric power sum basis has the following multiplicative property as shown in \cite{BallantineEtc}: 
\[\psi_\alpha \psi_\beta = \sum\limits_{\gamma \in \alpha \shuffle \beta} \psi_\gamma.\]
Every composition $\gamma$ in the $\psi$-support of $\psi_\alpha \psi_\beta$ satisfies $l(\gamma) = l(\alpha) + l(\beta)$.  We can then use the $\psi$-basis to refine $\QSym_n$:
\[\QSym_n = \bigoplus_{\lambda \vdash n} \QSym_{\lambda},\]
where $\QSym_{\lambda}$ is spanned by $\{\psi_\alpha \colon \alpha \sim \lambda \}$.  Observe that if $f \in \QSym_{\lambda}$ and $g \in \QSym_{\mu}$, then $f \cdot g \in \QSym_{\nu}$, where $\nu$ is the partition formed by combining and rearranging the parts of $\lambda$ and $\mu$.

We define $\QSym_{n, m}$ to be 
\[ \QSym_{n, m} := \bigoplus_{\substack{\lambda \vdash n \\ l(\lambda) = m}} \QSym_{\lambda}. \]
If $f \in \QSym_{n, m}$, then we say the \emph{length} of $f$ is $m$.  This gives a grading for $\QSym_n$,
\[\QSym_n = \bigoplus_{m \geq 0} \QSym_{n, m}.\]
If $f \in \QSym_{n_1, m_1}$ and $g \in \QSym_{n_2, m_2}$, then $f \cdot g \in \QSym_{n_1+n_2, m_1+m_2}$.

\subsubsection{Hopf Algebra}

The ring of quasisymmetric functions is a Hopf algebra; in particular, it is equipped with a coproduct.  For more details on combinatorial Hopf algebras, see \cite{ABS, GrinbergReiner, Schmitt}.

The coproduct  is defined on the fundamental quasisymmetric function basis (see Proposition 5.2.15 in \cite{GrinbergReiner}) by
\[\Delta(L_{\alpha}) := \sum_{\substack{(\beta, \gamma) \\ \alpha = \beta \cdot \gamma \text{ or } \beta \odot \gamma}} L_{\beta} \otimes L_{\gamma}.\]
The coproduct on the type 1 quasisymmetric power sum basis satisfies
\[ \Delta (\psi_\alpha) = \sum_{\beta \cdot \gamma = \alpha} \psi_{\beta} \otimes \psi_\gamma.\]
This follows from the definition of the $\psi_\alpha$ in \cite{BallantineEtc}: the $\psi_\alpha$ are defined to be dual to the noncommutative power sums $\mathbf{\Psi}_\alpha$, whose multiplication is given by concatenation of compositions.


We define the \textit{graded comultiplication} $\Delta_{\alpha}(\psi_\beta)$ to be
\[\Delta_{\alpha}(\psi_{\beta}) := \sum_{\substack{\gamma^{(1)} \cdots \gamma^{(l)} = \beta \\ \gamma^{(i)} \vDash \alpha_i}} \psi_{\gamma^{(1)}} \otimes \dots \otimes \psi_{\gamma^{(l)}},\]
where each $\gamma^{(i)}$ is a composition of $\alpha_i$.  (Therefore $\Delta_{\alpha}(\psi_{\beta}) = 0$ unless $\alpha \succeq \beta$.)  In other words, the graded coproduct gives one graded component of the iterated coproduct.  We define the graded comultiplication on the fundamental quasisymmetric function basis similarly.

From Theorem~\ref{L Expansion} and the coproduct on the fundamental basis described above, one can derive the following comultiplication formula for a $(P, \omega)$-partition generating function:
\[\Delta(\KPomega) = \sum_{I} K_{(I, \omega)}(\x)\otimes K_{(P \setminus I, \omega)}(\x),\]
where $I$ is an order ideal of $P$ (we abuse notation slightly by writing $\omega$ for the appropriate restriction of $\omega$ to either $I$ or $P \setminus I$).
Iterating and applying the graded comultiplication then gives
\begin{equation} \label{alpha coproduct} \tag{$*$}
\Delta_{\alpha}(\KPomega) = \sum K_{(P_1, \omega)}(\x) \otimes \cdots \otimes K_{(P_{l}, \omega)}(\x),
\end{equation}
where for each $i$, $|P_i| = \alpha_i$; $P_1, P_2, \dots, P_{l}$ partition $P$; and $P_1 \cup \dots \cup P_i$ is an order ideal of $P$.  

%
%
%
%

\section{Operations on $\QSym$}
In this section, we will express some useful linear functionals in terms of the type 1 quasisymmetric power sum basis.  We will then see how some well known automorphisms of $\QSym$ act on this basis.
\subsection{The $\MinOne$ and $\MaxOne$ functionals}
In \cite{LiuWeselcouch}, 
the present authors showed the existence of linear functionals $\MinOne$ and $\MaxOne$ on $\QSym$ that satisfy 
\[
\MinOne(K_P(\mathbf{x})) = \begin{cases}
            1 & \text{if $P$ has exactly 1 minimal element,} \\
            0 & \text{otherwise,}
        \end{cases}
\]
and
\[
\MaxOne(K_P(\mathbf{x})) = \begin{cases}
            1 & \text{if $P$ has exactly 1 maximal element,} \\
            0 & \text{otherwise,}
        \end{cases}
\]
whenever $P$ is a naturally labeled poset.
(These functions were denoted $\min_1$ and $\max_1$ in \cite{LiuWeselcouch}.) 
On the fundamental quasisymmetric function basis, these functions act as follows:
\[
\MinOne(L_{\alpha}) = \begin{cases}
            (-1)^k & \text{if } \alpha = (1^{k}, n-k) \text{ for } 0 \leq k < n, \\
            0 & \text{otherwise,}
        \end{cases}
\]
and
\[
\MaxOne(L_{\alpha}) = 
	\begin{cases}
            (-1)^{k} &\text{if } \alpha = (n-k, 1^{k}) \text{ for } 0 \leq k < n, \\
            0 &\text{otherwise.}
        \end{cases}
\]

These functions can do more than just determine if a poset has exactly one minimal element or one maximal element: they can be used to test if a quasisymmetric function is irreducible.

\begin{lemma}\label{Max One product zero}
For all non-constant, homogeneous $f, g \in \QSym$, $\MinOne(f\cdot g) = \MaxOne(f \cdot g) = 0$.
\end{lemma}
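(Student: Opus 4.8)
The plan is to prove the two statements $\MinOne(f\cdot g)=0$ and $\MaxOne(f\cdot g)=0$ simultaneously by reducing to the case where $f$ and $g$ are basis elements and then exploiting the multiplicative structure of the $\psi$-basis. Since $\MinOne$ and $\MaxOne$ are linear functionals, it suffices by bilinearity to verify the claim when $f=\psi_\alpha$ and $g=\psi_\beta$ for nonempty compositions $\alpha,\beta$. The key structural fact I would invoke is the shuffle product formula stated earlier in the excerpt, namely $\psi_\alpha\psi_\beta=\sum_{\gamma\in\alpha\shuffle\beta}\psi_\gamma$, so that the problem becomes one of evaluating $\MinOne$ and $\MaxOne$ on a sum of $\psi_\gamma$'s indexed by shuffles. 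The heart of the argument is therefore to understand $\MinOne(\psi_\gamma)$ and $\MaxOne(\psi_\gamma)$ for a single composition $\gamma$.

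First I would compute $\MinOne(\psi_\gamma)$ explicitly. Using the monomial expansion $\psi_\gamma=\sum_{\delta\succeq\gamma}\frac{1}{\pi(\gamma,\delta)}M_\delta$ together with the change of basis $M_\delta=\sum_{\epsilon\preceq\delta}(-1)^{l(\epsilon)-l(\delta)}L_\epsilon$, I can write $\psi_\gamma$ in the fundamental basis and then apply the known evaluation $\MinOne(L_\alpha)=(-1)^k$ when $\alpha=(1^k,n-k)$ and $0$ otherwise. Because $\MinOne$ only detects the fundamental compositions of the very special ``hook-like'' shape $(1^k,n-k)$, this evaluation should collapse to a clean closed form; I expect $\MinOne(\psi_\gamma)$ to be nonzero only for compositions $\gamma$ of a restricted type (and symmetrically for $\MaxOne$, governed by $(n-k,1^k)$). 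The alternative, and I suspect cleaner, route is to establish a direct formula such as $\MinOne(\psi_\gamma)\ne 0$ only when $\gamma$ is a single part, i.e. $\gamma=(n)$, which would follow from the observation that $\psi_{(n)}=p_n/n=M_{(n)}$ corresponds at the symmetric level to a power sum and is the only $\psi_\gamma$ whose fundamental expansion lands on the required hooks with the right coefficients.

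Granting such a formula, the conclusion follows quickly. The shuffle multiset $\alpha\shuffle\beta$ consists only of compositions $\gamma$ with $l(\gamma)=l(\alpha)+l(\beta)\ge 2$, since $\alpha$ and $\beta$ are each nonempty (nonconstant and homogeneous forces each to have a genuine composition index, so length at least one). Consequently every $\gamma$ appearing in $\psi_\alpha\psi_\beta$ has length at least two, which places it outside the support on which $\MinOne$ and $\MaxOne$ are nonzero if, as I anticipate, those functionals vanish on all $\psi_\gamma$ with $l(\gamma)\ge 2$. Summing over the shuffle multiset then gives $\MinOne(f\cdot g)=\MaxOne(f\cdot g)=0$ term by term.

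The main obstacle I foresee is the explicit evaluation of $\MinOne(\psi_\gamma)$: the double sum arising from composing the $\psi$-to-$M$ and $M$-to-$L$ change of bases, weighted by the $\frac{1}{\pi(\gamma,\delta)}$ coefficients, is combinatorially delicate, and I must pin down exactly which $\gamma$ survive and show the surviving contributions force $l(\gamma)=1$. I would try to sidestep the brute-force computation by arguing directly that $\MinOne$ (respectively $\MaxOne$) is, up to normalization, the functional extracting the coefficient of $\psi_{(n)}$ in the length grading; since the length grading is additive under multiplication ($\QSym_{n_1,m_1}\cdot\QSym_{n_2,m_2}\subseteq\QSym_{n_1+n_2,m_1+m_2}$ as noted in the preliminaries) and $f\cdot g$ with both factors nonconstant lives entirely in length $\ge 2$, it has zero component in $\QSym_{n,1}=\mathrm{span}\{\psi_{(n)}\}$, giving the vanishing immediately. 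Reconciling the explicit $L$-basis formula for $\MinOne$ with this length-grading interpretation is the crux, and I expect that verifying $\MinOne(\psi_\gamma)=0$ for $l(\gamma)\ge 2$ is precisely the content that makes the clean argument rigorous.
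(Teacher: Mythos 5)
Your $\MinOne$ half is correct in outline, and it is a genuinely different route from the paper's: you reduce by bilinearity to $f=\psi_\alpha$, $g=\psi_\beta$, invoke the shuffle product, note that every $\gamma\in\alpha\shuffle\beta$ has $l(\gamma)=l(\alpha)+l(\beta)\geq 2$, and then need only the fact that $\MinOne(\psi_\gamma)=0$ whenever $l(\gamma)\geq 2$. That fact is true --- it is exactly the second half of Lemma~\ref{MaxOne expansion}, proved there by pairing the coarsenings $\gamma\cdot(\alpha_l)$ and $\gamma\odot(\alpha_l)$, whose contributions cancel --- and its proof does not rely on the present lemma, so there is no circularity; with it, $\MinOne$ is precisely the coefficient-of-$\psi_{(n)}$ functional and your length-grading argument closes the $\MinOne$ case. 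The paper argues differently: it expands $f$ and $g$ in the spanning set of $P$-partition generating functions of naturally labeled posets and observes that a product of two such functions is the generating function of a disconnected poset, which has more than one minimal element and is therefore killed by $\MinOne$.

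However, the $\MaxOne$ half of your proposal has a genuine gap: the ``symmetric'' statement you anticipate, that $\MaxOne(\psi_\gamma)=0$ for all $l(\gamma)\geq 2$, is false. Concretely, $\psi_{21}=\tfrac{1}{2}M_{21}+\tfrac{1}{6}M_{3}$, and since $\MaxOne(M_\alpha)=(-1)^{l(\alpha)-1}\alpha_1$ (Lemma~\ref{Mon Basis Expansion}) this gives $\MaxOne(\psi_{21})=-1+\tfrac{1}{2}=-\tfrac{1}{2}\neq 0$; Lemma~\ref{MaxOne expansion} and the example following it (where $\MaxOne(\psi_{3421})=\tfrac{4}{189}$) make the same point. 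The asymmetry comes from the monomial expansion $\psi_\gamma=\sum_{\delta\succeq\gamma}\pi(\gamma,\delta)^{-1}M_\delta$, which is not invariant under reversal, so $\MaxOne$ is \emph{not} the coefficient-of-$\psi_{(n)}$ functional. The vanishing of $\MaxOne$ on products is a cancellation across the whole shuffle multiset rather than term by term: e.g.\ $\psi_{(1)}\psi_{(2)}=\psi_{12}+\psi_{21}$ and $\MaxOne(\psi_{12})=+\tfrac{1}{2}$ cancels $\MaxOne(\psi_{21})=-\tfrac{1}{2}$. The cleanest repair inside your framework is to note that $\MinOne\circ\rho=\MaxOne$ (immediate from $\rho(L_\alpha)=L_{\alpha^{\mathrm{rev}}}$ and the $L$-basis formulas for the two functionals) and that $\rho$ is an algebra automorphism of $\QSym$, so $\MaxOne(f\cdot g)=\MinOne(\rho(f)\cdot\rho(g))=0$ by your $\MinOne$ case; alternatively, adopt the paper's poset argument, where a disconnected poset fails to have a unique maximal element just as it fails to have a unique minimal one.
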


\begin{proof}
The $P$-partition generating functions of naturally labeled posets span $\QSym$ \cite{KPSpanQSym}. 
Therefore we can express $f$ and $g$ as a linear combination of these partition generating functions.  The product of the partition generating functions of any two naturally labeled posets gets sent to $0$ by $\MinOne$ since no disconnected poset has exactly $1$ minimal element.

A similar proof shows that $\MaxOne(f \cdot g) = 0$.

(One can also easily prove this result using the fundamental basis.)
\end{proof}

Note that if $f$ is a (non-constant, homogeneous) quasisymmetric function and $\MaxOne(f) \neq 0$ (or similarly if $\MinOne(f) \neq 0)$, then Lemma \ref{Max One product zero} tells us that $f$ is irreducible (and in fact does not lie in the span of homogeneous reducible elements of $\QSym$).

It is straightforward to evaluate $\MinOne$ and $\MaxOne$ on the monomial basis.
\begin{lemma}\label{Mon Basis Expansion}
On the monomial basis $\{M_\alpha\}$,
\[\MaxOne(M_\alpha) = (-1)^{l(\alpha)-1}\alpha_1 \quad\text{ and }\quad
\MinOne(M_\alpha) = (-1)^{l(\alpha)-1}\alpha_{l(\alpha)}.\]
\end{lemma}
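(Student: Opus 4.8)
The plan is to compute $\MaxOne$ and $\MinOne$ on the monomial basis by expanding $M_\alpha$ in the fundamental basis and then applying the known values of the functionals on $\{L_\beta\}$. Recall that the excerpt gives $M_\alpha = \sum_{\beta \preceq \alpha}(-1)^{l(\beta)-l(\alpha)}L_\beta$ via M\"obius inversion, so by linearity
\[
\MaxOne(M_\alpha) = \sum_{\beta \preceq \alpha}(-1)^{l(\beta)-l(\alpha)}\MaxOne(L_\beta).
\]
Since $\MaxOne(L_\beta)$ vanishes unless $\beta = (n-k,1^k)$ for some $0 \le k < n$, the only surviving terms come from refinements $\beta$ of $\alpha$ that have the special hook shape $(n-k,1^k)$. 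I would first identify exactly which such $\beta$ refine $\alpha$, which is the crux of the computation.

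The key observation is that a hook composition $\beta=(n-k,1^k)$ refines $\alpha=(\alpha_1,\dots,\alpha_l)$ precisely when all but the first part of $\alpha$ consists of $1$'s that are split off at the end: concretely, $\beta\preceq\alpha$ forces $D(\alpha)\subseteq D(\beta)=\{n-k,n-k+1,\dots,n-1\}$. Writing $D(\alpha)=\{\alpha_1,\alpha_1+\alpha_2,\dots\}$, this containment holds iff each partial sum $\alpha_1+\cdots+\alpha_j$ lies in the top run $\{n-k,\dots,n-1\}$; working backwards, the valid hooks are exactly those obtained by refining $\alpha$ so that the trailing parts $\alpha_2,\dots,\alpha_l$ are each broken completely into $1$'s while $\alpha_1$ is left intact. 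Thus the admissible $\beta$ are indexed by whether one additionally splits the block $\alpha_1$; more carefully, I expect the surviving hooks to be $(\alpha_1, 1^{\,n-\alpha_1})$ together with finer hooks where part of the first block is also peeled off, and a short case analysis will pin down the sign and count. I anticipate the sum telescopes: all but one term cancel in pairs under the alternating sign $(-1)^{l(\beta)-l(\alpha)}$, leaving a single contribution of magnitude $\alpha_1$ with sign $(-1)^{l(\alpha)-1}$.

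The main obstacle will be the bookkeeping in this cancellation — correctly enumerating which hook refinements $\beta$ of $\alpha$ occur and verifying that the signed sum collapses to exactly $(-1)^{l(\alpha)-1}\alpha_1$ rather than a larger partial sum. A clean way to organize this is to separate the index $k$ in $(n-k,1^k)$ into the case where the hook arm boundary $n-k$ coincides with a partial sum of $\alpha$ versus where it splits the first part $\alpha_1$; the former forces $\beta$ to refine only the tail of $\alpha$ into ones (one refinement per choice, contributing alternating signs that nearly cancel), while the latter enumerates the $\alpha_1$ ways to cut inside the first block. After establishing the $\MaxOne$ formula, the $\MinOne$ case follows by the symmetric argument using $\MinOne(L_\beta)=(-1)^k$ for $\beta=(1^k,n-k)$, where now the surviving hooks refine the \emph{head} of $\alpha$ into ones and leave the last part $\alpha_{l(\alpha)}$ intact, yielding $(-1)^{l(\alpha)-1}\alpha_{l(\alpha)}$; alternatively one can invoke the reversal symmetry $\alpha\mapsto\alpha^{\mathrm{rev}}$ that interchanges the two functionals.
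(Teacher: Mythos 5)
Your setup matches the paper's proof exactly: expand $M_\alpha = \sum_{\beta\preceq\alpha}(-1)^{l(\beta)-l(\alpha)}L_\beta$, apply $\MaxOne$, and note that only hook refinements $\beta=(n-k,1^k)$ survive. Your identification of the surviving hooks is also (after your self-correction) right: $\beta\preceq\alpha$ means $D(\alpha)\subseteq D(\beta)=\{n-k,n-k+1,\dots,n-1\}$, which holds if and only if $n-k\le\alpha_1$, so the admissible hooks are exactly $\beta_i=(\alpha_1-i,\,1^{n-\alpha_1+i})$ for $i=0,1,\dots,\alpha_1-1$, giving $\alpha_1$ of them.

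The genuine gap is in your sign analysis, which is the heart of the lemma. You anticipate that ``all but one term cancel in pairs\ldots leaving a single contribution of magnitude $\alpha_1$.'' This cannot happen: each surviving term $(-1)^{l(\beta_i)-l(\alpha)}\MaxOne(L_{\beta_i})$ equals $\pm 1$, so a single surviving term has magnitude $1$, not $\alpha_1$ --- the mechanism you propose is internally inconsistent. What actually happens is the opposite of cancellation. For $\beta_i=(\alpha_1-i,1^{n-\alpha_1+i})$ one has $l(\beta_i)=n-\alpha_1+i+1$ and $\MaxOne(L_{\beta_i})=(-1)^{n-\alpha_1+i}$, so
\[
(-1)^{l(\beta_i)-l(\alpha)}\MaxOne(L_{\beta_i})
=(-1)^{\,n-\alpha_1+i+1-l(\alpha)}\cdot(-1)^{\,n-\alpha_1+i}
=(-1)^{l(\alpha)-1},
\]
independent of $i$: the M\"obius sign and the value of the functional compensate each other exactly. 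All $\alpha_1$ terms carry the same sign and add up, yielding $(-1)^{l(\alpha)-1}\alpha_1$; there is no telescoping and no case analysis splitting ``boundary at a partial sum'' from ``boundary inside the first block.'' Once you replace the conjectured cancellation with this one-line computation, your argument coincides with the paper's, and the $\MinOne$ formula indeed follows by the symmetric argument on hooks $(1^k,n-k)$, as you say.
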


\begin{proof}
Expanding $M_{\alpha}$ in the $\{L_\alpha\}$ basis and applying $\MaxOne$ gives
\[\MaxOne(M_{\alpha}) = \sum_{\beta \preceq \alpha}(-1)^{l(\beta) -l(\alpha)}\MaxOne(L_\beta).\]
Since $\MaxOne(L_{\beta}) = 0$ unless $\beta = (n-k, 1^k)$ we have
\[\MaxOne(M_{\alpha}) = \sum_{i=0}^{\alpha_1 -1}(-1)^{n-\alpha_1+i+1-l(\alpha)}\MaxOne(L_{(\alpha_1-i, 1^{n-\alpha_1+i})}).\]
But $\MaxOne(L_{(\alpha_1-i, 1^{n-\alpha_1+i})}) = (-1)^{n-\alpha_1+i}$, therefore $\MaxOne(M_\alpha) = (-1)^{l(\alpha)-1}\alpha_1$ as desired.

A similar argument shows that $\MinOne(M_{\alpha}) = (-1)^{l(\alpha)-1}\alpha_{l(\alpha)}$.
\end{proof}

We can now evaluate the functions $\MaxOne$ and $\MinOne$ on the type 1 quasisymmetric power sum basis.

\begin{lemma} \label{MaxOne expansion}
On the type 1 quasisymmetric power sum basis $\{\psi_\alpha\}$,
\[\MaxOne(\psi_\alpha) = \sum_{i=1}^{l(\alpha)} \frac{(-1)^{l(\alpha)-i}}{\pi(\alpha_1\cdots\alpha_{i-1})\cdot \pi((\alpha_{i+1}\cdots\alpha_{l(\alpha)})^{\textrm{rev}})},\]
and
\[
\MinOne(\psi_{\alpha}) = \begin{cases}
            1 & \text{if } \alpha = (n), \\
            0 & \text{otherwise.}
        \end{cases}
\]
\end{lemma}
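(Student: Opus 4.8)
The plan is to prove both identities by expanding $\psi_\alpha$ in the monomial basis via $\psi_\alpha = \sum_{\beta\succeq\alpha}\frac{1}{\pi(\alpha,\beta)}M_\beta$, applying the functional termwise using Lemma~\ref{Mon Basis Expansion}, and then reorganizing the resulting sum over coarsenings $\beta$ according to a distinguished block of $\beta$. Since $\MaxOne(M_\beta)$ depends only on the first part $\beta_1$ and $\MinOne(M_\beta)$ only on the last part $\beta_{l(\beta)}$, the natural move is to group the coarsenings by their first block. The workhorse is Lemma~\ref{pi sum}, which after dividing by $n!$ reads $\frac{1}{\pi(\delta^{\mathrm{rev}})} = \sum_{\beta'\succeq\delta}(-1)^{l(\delta)-l(\beta')}\frac{1}{\pi(\delta,\beta')}$, equivalently $\sum_{\beta'\succeq\delta}(-1)^{l(\beta')}\frac{1}{\pi(\delta,\beta')} = (-1)^{l(\delta)}\frac{1}{\pi(\delta^{\mathrm{rev}})}$. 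This is precisely the alternating sum that appears once a boundary block is fixed.

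For $\MaxOne$, I would write each coarsening as $\beta = (\beta_1)\cdot\beta''$ with $\beta_1 = \alpha_1 + \cdots + \alpha_i$, so that $\beta''$ ranges over all coarsenings of $\delta = (\alpha_{i+1},\dots,\alpha_l)$. Multiplicativity of $\pi$ over the blocks of $\beta$ gives $\pi(\alpha,\beta) = \pi((\alpha_1,\dots,\alpha_i))\cdot\pi(\delta,\beta'')$, and $l(\beta) = 1 + l(\beta'')$. Substituting and pulling the $i$-dependent factor out front, the inner sum over $\beta''$ collapses by Lemma~\ref{pi sum} to $(-1)^{l-i}/\pi(\delta^{\mathrm{rev}})$. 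Finally one simplifies $\frac{\alpha_1+\cdots+\alpha_i}{\pi((\alpha_1,\dots,\alpha_i))} = \frac{1}{\pi((\alpha_1,\dots,\alpha_{i-1}))}$ by cancelling the top partial-sum factor, which yields exactly the claimed expression. I would verify the extreme terms $i=1$ and $i=l$, where an empty composition appears and $\pi$ of the empty composition is taken to be $1$.

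For $\MinOne$ I would induct on $l = l(\alpha)$. The base case $\alpha=(n)$ is immediate, since $\psi_{(n)} = \frac1n M_{(n)}$ gives $\MinOne(\psi_{(n)}) = \frac1n\cdot n = 1$. For the inductive step, again write $\beta = (\beta_1)\cdot\beta''$ with $\beta_1 = \alpha_1+\cdots+\alpha_i$; now $\MinOne(M_\beta)$ reads off the last part of $\beta''$, so for $i<l$ the inner sum over $\beta''$ is exactly $-\MinOne(\psi_\delta)$ with $\delta=(\alpha_{i+1},\dots,\alpha_l)$, while $i=l$ contributes $n/\pi(\alpha)$. This produces the recursion
\[
\MinOne(\psi_\alpha) = \frac{n}{\pi(\alpha)} - \sum_{i=1}^{l-1}\frac{\MinOne(\psi_{(\alpha_{i+1},\dots,\alpha_l)})}{\pi((\alpha_1,\dots,\alpha_i))}.
\]
By the inductive hypothesis every summand vanishes except $i=l-1$, where $(\alpha_{i+1},\dots,\alpha_l)=(\alpha_l)$ has value $1$; since $\pi(\alpha) = n\cdot\pi((\alpha_1,\dots,\alpha_{l-1}))$, the two surviving terms cancel, giving $\MinOne(\psi_\alpha)=0$ for $l\geq2$.

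The only genuine obstacle is bookkeeping: keeping the signs $(-1)^{l(\beta)-1}$ aligned with the length shift $l(\beta)=1+l(\beta'')$, and applying Lemma~\ref{pi sum} to the tail composition $\delta$ (of size strictly less than $n$) rather than to $\alpha$ itself, including the degenerate empty-tail case. Once the block factorization of $\pi$ and the reindexing are arranged carefully, both identities fall out with no new idea beyond Lemmas~\ref{Mon Basis Expansion} and~\ref{pi sum}.
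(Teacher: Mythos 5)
Your proposal is correct, and your $\MaxOne$ computation is essentially identical to the paper's proof: expand in the monomial basis, group coarsenings by their first block, use the factorization $\pi(\alpha,(\alpha_1+\cdots+\alpha_i)\cdot\gamma)=\pi((\alpha_1,\dots,\alpha_i))\cdot\pi(\alpha^{i},\gamma)$ together with the cancellation $\frac{\alpha_1+\cdots+\alpha_i}{\pi((\alpha_1,\dots,\alpha_i))}=\frac{1}{\pi((\alpha_1,\dots,\alpha_{i-1}))}$, and collapse the inner sum via Lemma~\ref{pi sum}. Where you diverge is the $\MinOne$ half. The paper groups coarsenings by their \emph{last} block: for each $\gamma\succeq(\alpha_1,\dots,\alpha_{l-1})$ it pairs the two coarsenings $\gamma\cdot(\alpha_l)$ and $\gamma\odot(\alpha_l)$, and the identities $\pi(\alpha,\gamma\cdot(\alpha_l))=\pi((\alpha_1,\dots,\alpha_{l-1}),\gamma)\cdot\alpha_l$ and $\pi(\alpha,\gamma\odot(\alpha_l))=\pi((\alpha_1,\dots,\alpha_{l-1}),\gamma)\cdot(\gamma_{l(\gamma)}+\alpha_l)$ make each pair cancel outright, with no induction. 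You instead group by the first block, which turns $\MinOne(M_\beta)$'s dependence on the last part into the recursion
\[
\MinOne(\psi_\alpha)=\frac{n}{\pi(\alpha)}-\sum_{i=1}^{l-1}\frac{\MinOne(\psi_{(\alpha_{i+1},\dots,\alpha_l)})}{\pi((\alpha_1,\dots,\alpha_i))},
\]
and then induct on length; the only surviving term $i=l-1$ cancels against $n/\pi(\alpha)=1/\pi((\alpha_1,\dots,\alpha_{l-1}))$. Both arguments are sound and of comparable length; the paper's pairing is a one-shot sign-reversing cancellation (and implicitly explains \emph{why} the sum vanishes term by term), while your recursion has the small advantage of reusing the same first-block decomposition for both functionals, making the two halves of the lemma structurally parallel.
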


\begin{proof}
We first calculate $\MaxOne(\psi_\alpha)$. 
Expanding  $\psi_\alpha$ in the monomial basis and applying $\MaxOne$ using Lemma~\ref{Mon Basis Expansion} gives
\[ \MaxOne(\psi_{\alpha}) = \sum_{\beta \succeq \alpha} \frac{1}{\pi(\alpha, \beta)}\MaxOne(M_\beta) = \sum_{\beta \succeq \alpha} \frac{(-1)^{l(\beta)-1}\beta_1}{\pi(\alpha, \beta)}.\]

Let $l(\alpha) = l$.  For each $\beta$ that coarsens $\alpha$, $\beta_1 = \alpha_1 + \cdots + \alpha_i$ for some value of $i$, and the composition $\gamma = (\beta_2, \dots, \beta_{l(\beta)})$ coarsens $\alpha^{i}:=(\alpha_{i+1}, \dots, \alpha_l)$.  We can then group together the compositions $\beta$ by their first component:
\begin{align*}
\MaxOne(\psi_\alpha) &= \sum_{i=1}^{l} \sum_{\gamma \succeq \alpha^{i}} \frac{(-1)^{l(\gamma)} (\alpha_1 + \cdots + \alpha_i)}{\pi(\alpha, (\alpha_1+ \cdots + \alpha_i) \cdot \gamma)}\\
&= \sum_{i=1}^{l} \sum_{\gamma \succeq \alpha^{i}} \frac{(-1)^{l(\gamma)}}{\pi(\alpha_1 \cdots \alpha_{i-1}) \cdot \pi(\alpha^{i}, \gamma)}.
\end{align*}
By Lemma \ref{pi sum}, we have that, for all $i$,
\begin{align*}
 \sum_{\gamma \succeq \alpha^{i}} \frac{(-1)^{l(\gamma)}}{\pi(\alpha_1 \cdots \alpha_{i-1})\cdot \pi(\alpha^{i} , \gamma)} &= \frac{(-1)^{l(\alpha^{i})}}{\pi(\alpha_1\cdots \alpha_{i-1})}\sum_{\gamma \succeq \alpha^{i}} \frac{(-1)^{l(\alpha^{i})-l(\gamma)}}{\pi(\alpha^{i}, \gamma)}\\
&= \frac{(-1)^{l-i}}{\pi(\alpha_1\cdots\alpha_{i-1})\cdot \pi((\alpha_{i+1}\cdots\alpha_{l})^{\textrm{rev}})},
 \end{align*}
as desired.

We will now show that $\MinOne(\psi_{\alpha})= 0$ unless $\alpha = (n)$, in which case $\MinOne(\psi_{(n)})= 1$.
As before, we will express $\psi_\alpha$ in terms of the monomial basis, and evaluate $\MinOne$ on both sides.  This gives 
\[ \MinOne(\psi_{\alpha}) = \sum_{\beta \succeq \alpha} \frac{1}{\pi(\alpha, \beta)}\MinOne(M_\beta) = \sum_{\beta \succeq \alpha} \frac{(-1)^{l(\beta)-1}\beta_{l(\beta)}}{\pi(\alpha, \beta)}.\]
When $l(\alpha) > 1$, for each $\gamma$ that coarsens $(\alpha_1, \dots, \alpha_{l-1})$, both $\beta = \gamma \cdot (\alpha_l)$ and $\beta' = \gamma \odot (\alpha_l)$ are coarsenings of $\alpha$.
Therefore 
\[ \sum_{\beta \succeq \alpha} \frac{(-1)^{l(\beta)-1}\beta_{l(\beta)}}{\pi(\alpha, \beta)} = \sum_{\gamma \succeq (\alpha_1, \dots, \alpha_{l-1})} \left(\frac{(-1)^{l(\gamma)}\alpha_l}{\pi(\alpha, \gamma \cdot (\alpha_l))} + \frac{(-1)^{l(\gamma)-1}(\gamma_{l(\gamma)}+\alpha_l)}{\pi(\alpha, \gamma \odot (\alpha_l))}\right).\]
But for all $\gamma$, 
\[\pi(\alpha, \gamma \cdot (\alpha_l)) = \pi((\alpha_1, \dots, \alpha_{l-1}), \gamma)\cdot \alpha_l,\] 
and 
\[\pi(\alpha, \gamma \odot (\alpha_l)) = \pi((\alpha_1, \dots \alpha_{l-1}), \gamma)\cdot (\gamma_{l(\gamma)} + \alpha_l).\]
Therefore each term in the sum vanishes, so $\MinOne(\psi_\alpha) = 0$ when $l(\alpha) > 1$.  When $\alpha = (n)$, we have
\[\MinOne(\psi_{(n)})= \frac{n}{\pi((n), (n))} = 1.\qedhere\]
\end{proof}

\begin{example}
Let $\alpha = 3421$.  We have 
\[\MaxOne(\psi_{3421}) = \frac{(-1)^3}{(1 \cdot 3 \cdot 7)} + \frac{(-1)^2}{(3)(1 \cdot 3)} + \frac{(-1)^1}{(3\cdot 7)(1)} + \frac{(-1)^0}{(3 \cdot 7 \cdot 9)} = \frac{4}{189}.\]
\end{example}

Due to the simplicity of the behavior of $\MinOne$ on $\psi_\alpha$, we can now compute the coefficients in the $\psi_\alpha$-expansion of any quasisymmetric function.  
\begin{thm}\label{coeff of psi alpha}
Suppose $f \in \QSym$ and $f = \sum_\alpha c_\alpha \psi_\alpha$.  Then
\[c_\alpha = \MinOne^{\otimes l(\alpha)}(\Delta_{\alpha}f).\]
\end{thm}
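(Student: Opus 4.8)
The plan is to verify the identity on the basis $\{\psi_\beta\}$ and extend by linearity. Writing $f = \sum_\beta c_\beta \psi_\beta$ and setting $l = l(\alpha)$, the linearity of the graded comultiplication $\Delta_\alpha$ and of the functional $\MinOne^{\otimes l}$ reduces the claim to showing that
\[\MinOne^{\otimes l}(\Delta_\alpha \psi_\beta) = \delta_{\alpha, \beta}\]
for every composition $\beta$, since this immediately yields $\MinOne^{\otimes l(\alpha)}(\Delta_\alpha f) = \sum_\beta c_\beta \delta_{\alpha,\beta} = c_\alpha$. In this sense, the theorem asserts that $\MinOne^{\otimes l(\alpha)} \circ \Delta_\alpha$ is precisely the coordinate functional dual to $\psi_\alpha$.

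The next step is to unfold the definition of the graded comultiplication and then evaluate $\MinOne$ factor by factor. By definition,
\[\Delta_\alpha \psi_\beta = \sum_{\substack{\gamma^{(1)} \cdots \gamma^{(l)} = \beta \\ \gamma^{(i)} \vDash \alpha_i}} \psi_{\gamma^{(1)}} \otimes \cdots \otimes \psi_{\gamma^{(l)}},\]
so applying $\MinOne$ in each tensor slot gives
\[\MinOne^{\otimes l}(\Delta_\alpha \psi_\beta) = \sum_{\substack{\gamma^{(1)} \cdots \gamma^{(l)} = \beta \\ \gamma^{(i)} \vDash \alpha_i}} \prod_{i=1}^{l} \MinOne(\psi_{\gamma^{(i)}}).\]
The heart of the argument is the evaluation of $\MinOne$ on the type 1 power sum basis from Lemma~\ref{MaxOne expansion}: each factor $\MinOne(\psi_{\gamma^{(i)}})$ equals $1$ when $\gamma^{(i)}$ is a single part and vanishes otherwise. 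Hence a term survives only when every block $\gamma^{(i)}$ consists of a single part; since $\gamma^{(i)} \vDash \alpha_i$, this forces $\gamma^{(i)} = (\alpha_i)$ for all $i$, and therefore $\beta = \gamma^{(1)} \cdots \gamma^{(l)} = (\alpha_1, \dots, \alpha_l) = \alpha$. Conversely, when $\beta = \alpha$ this all-singletons decomposition is the unique decomposition into $l$ nonempty blocks, and it contributes $1$, so $\MinOne^{\otimes l}(\Delta_\alpha \psi_\beta) = \delta_{\alpha,\beta}$.

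I do not expect a serious obstacle: the only point requiring care is the combinatorial bookkeeping showing that forcing every block to be a single part pins down $\beta = \alpha$ uniquely, which follows because a composition $\beta$ with a block decomposition $\gamma^{(i)} \vDash \alpha_i$ into exactly $l$ one-part blocks must coincide with $\alpha$. Implicitly this also relies on the fact, noted after the definition of $\Delta_\alpha$, that $\Delta_\alpha \psi_\beta = 0$ unless $\beta \preceq \alpha$, so no extraneous compositions $\beta$ can contribute to the sum over the $\psi$-support of $f$.
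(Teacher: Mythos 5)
Your proposal is correct and follows essentially the same route as the paper's proof: reduce by linearity to showing $\MinOne^{\otimes l(\alpha)}(\Delta_\alpha \psi_\beta) = \delta_{\alpha,\beta}$, expand the graded coproduct on $\psi_\beta$, and use the evaluation $\MinOne(\psi_\gamma) = \delta_{l(\gamma),1}$ from Lemma~\ref{MaxOne expansion} to see that only the all-singleton decomposition survives, which forces $\beta = \alpha$. The only difference is presentational: you spell out the converse direction ($\beta = \alpha$ contributes exactly $1$) and the support restriction $\beta \preceq \alpha$ explicitly, which the paper leaves implicit.
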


\begin{proof}
Since $\MinOne$ is linear, we can prove this by showing that  $\MinOne^{\otimes l(\alpha)}(\Delta_{\alpha}\psi_\beta) = \delta_{\alpha, \beta}$.
Recall that 
\[\Delta_{\alpha}(\psi_{\beta}) = \sum_{\substack{\gamma^{(1)} \cdots \gamma^{(l)} = \beta \\ \gamma^{(i)} \vDash \alpha_i}} \psi_{\gamma^{(1)}} \otimes \dots \otimes \psi_{\gamma^{(l)}}\]
and $\MinOne(\psi_\alpha) = \delta_{l(\alpha), 1}$.  The only way compositions with length $1$ can concatenate to $\beta$ is if the compositions are $(\beta_1)$, $(\beta_2), \dots$.  Since then $\beta_i = \alpha_i$ for all $i$, it follows that $\MinOne^{\otimes l(\alpha)}(\Delta_{\alpha}\psi_\beta) = \delta_{\alpha, \beta}$.
\end{proof}

It should be noted that Theorem \ref{coeff of psi alpha} follows immediately from the combinatorial description of the coefficient of $\psi_\alpha$ in $\KP$ given by Alexandersson and Sulzgruber \cite{AlexanderssonSulzgruber}. Indeed, consider the following definition of a pointed $P$-partition.

\begin{defn}
Let $P$ be a naturally labeled poset. A $P$-partition $\theta$ is \emph{pointed} if $\theta$ is surjective onto $[k]$ for some $k$, and $\theta^{-1}(i)$ has a unique minimal element for all $i \in [k]$.
\end{defn}

We say that the \emph{weight} of a pointed $\Pomega$-partition $\theta$ is the composition \[\wt(\theta) = (|\theta^{-1}(1)|, |\theta^{-1}(2)|, \dots ).\] It is shown in \cite{AlexanderssonSulzgruber} that the coefficient of $\psi_\alpha$ in $\KP$ is the number of pointed $P$-partitions with weight $\alpha$.  This is the same as evaluating $\MinOne$ on each factor in the graded coproduct of $\KP$.  We will state this as a corollary to Theorem \ref{coeff of psi alpha}.


\begin{cor}[\cite{AlexanderssonSulzgruber}, Theorem 5.4]\label{Naturally Labeled Expansion}
Let $P$ be a naturally labeled poset.  Then
\[\KP = \sum_\theta \psi_{\wt(\theta)},\]
where the sum runs over all pointed $P$-partitions $\theta$.
\end{cor}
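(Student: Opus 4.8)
The plan is to compute the coefficient of $\psi_\alpha$ in $\KP$ directly from Theorem~\ref{coeff of psi alpha}, which gives $c_\alpha = \MinOne^{\otimes l(\alpha)}(\Delta_\alpha \KP)$, and then to match this quantity against the number of pointed $P$-partitions of weight $\alpha$. First I would apply the graded comultiplication formula \eqref{alpha coproduct} to expand $\Delta_\alpha \KP$ as a sum of tensors $K_{P_1}(\x) \otimes \cdots \otimes K_{P_{l}}(\x)$, where $(P_1, \dots, P_l)$ ranges over all ordered partitions of $P$ with $|P_i| = \alpha_i$ and each initial union $P_1 \cup \cdots \cup P_i$ an order ideal of $P$. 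Since $P$ is naturally labeled, the restriction of $\omega$ to each induced subposet $P_i$ is again order-preserving, so every $K_{P_i}(\x)$ is the generating function of a naturally labeled poset, and the defining property of $\MinOne$ applies to each factor.

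Applying $\MinOne^{\otimes l(\alpha)}$ and using multiplicativity of the tensor functional then yields
\[ c_\alpha = \sum_{(P_1,\dots,P_l)} \prod_{i=1}^{l} \MinOne\bigl(K_{P_i}(\x)\bigr), \]
where each factor equals $1$ if the induced subposet $P_i$ has a unique minimal element and $0$ otherwise. Hence $c_\alpha$ counts precisely those ordered partitions $(P_1,\dots,P_l)$ in which every block $P_i$ has a unique minimal element (and whose initial unions are order ideals).

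Next I would identify these ordered partitions with pointed $P$-partitions of weight $\alpha$ via the map $\theta \colon P \to \Z^+$ defined by $\theta(x) = i$ exactly when $x \in P_i$. The requirement that each initial union $P_1 \cup \cdots \cup P_i = \theta^{-1}([i])$ be an order ideal is equivalent to $\theta$ being order-preserving, i.e.\ a $P$-partition (condition (b) is vacuous for a naturally labeled poset); surjectivity onto $[l]$ follows from each $\alpha_i \geq 1$; and the condition that $P_i = \theta^{-1}(i)$ have a unique minimal element is exactly the pointedness condition. Since $\wt(\theta) = (|P_1|, \dots, |P_l|) = \alpha$, this is a weight-preserving bijection, so $c_\alpha$ equals the number of pointed $P$-partitions of weight $\alpha$. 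Summing over all $\alpha$ gives $\KP = \sum_\theta \psi_{\wt(\theta)}$.

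The computation is essentially routine once these pieces are assembled; the main point requiring care is verifying that the order-ideal filtration appearing in \eqref{alpha coproduct} corresponds exactly to the order-preserving condition defining a $P$-partition, and that the ``unique minimal element of the induced subposet $P_i$'' coincides with the ``unique minimal element of $\theta^{-1}(i)$'' demanded by pointedness. Both equivalences are immediate from the definitions, so I do not expect a substantial obstacle: this corollary is really a repackaging of Theorem~\ref{coeff of psi alpha} together with the combinatorial meaning of the functional $\MinOne$ on naturally labeled posets.
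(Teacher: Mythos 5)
Your proposal is correct and follows essentially the same route as the paper's own proof: both apply Theorem~\ref{coeff of psi alpha}, expand $\Delta_\alpha \KP$ via the graded comultiplication formula \eqref{alpha coproduct}, evaluate $\MinOne$ on each tensor factor using its defining property on naturally labeled posets, and identify the surviving ordered partitions with pointed $P$-partitions of weight $\alpha$. The extra verifications you flag (order-ideal filtration $\leftrightarrow$ order-preserving map, vacuousness of condition (b) under natural labeling) are indeed immediate and are left implicit in the paper.
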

\begin{proof}
Suppose $\KP = \sum_\alpha c_\alpha \psi_\alpha$.  By Theorem \ref{coeff of psi alpha}, $c_\alpha = \MinOne^{\otimes l(\alpha)}(\Delta_{\alpha}\KP)$.  Recall that $\MinOne(\KP) = 1$ if $P$ has exactly one minimal element and $\MinOne(\KP) = 0$ otherwise.  From equation \eqref{alpha coproduct} above, this means that $\MinOne^{\otimes l(\alpha)}(\Delta_{\alpha}\KP)$ is the number of ways to partition $P$ into $P_1, P_2, \dots, P_{l(\alpha)}$ where for all $i$, $|P_i| = \alpha_i$, $P_1 \cup \dots \cup P_i$ is an order ideal of $P$, and $P_i$ has exactly one minimal element.  This is exactly the number of pointed $P$-partitions with weight $\alpha$.
\end{proof}

In Section $6$ we will extend this result to all labeled posets.

\subsection{Automorphisms}

For this section, suppose that $(P, w)$ is a labeled poset.  Let $P^*$ be the poset formed by reversing the relations of $P$, that is $x \preceq_P y \iff y \preceq_{P^*} x$.  Let $w^*$ be the labeling of $P$ or $P^*$ defined by $w^*(x) = n - w(x) + 1$.


We will consider three well-known automorphisms of $\QSym$: $\omega$, $\rho$, and their composition $\omega\rho$. (In \cite{LuotoMykytiukVanWIlligenburg}, $\omega\rho$ is denoted by $\psi$.)  These automorphisms act as follows on the fundamental basis $\{L_\alpha\}$:
\begin{align*}
\omega(L_\alpha) &= L_{(\alpha^c)^{\textrm{rev}}},\\
\rho(L_\alpha) &= L_{\alpha^{\textrm{rev}}},\\
\omega\rho(L_\alpha) &= L_{\alpha^c}.
\end{align*}
Here $\alpha^c$ is the composition such that $D(\alpha)$ and $D(\alpha^c)$ are complementary subsets of $[n-1]$.

These automorphisms perform the following actions on $\KPomega$:
\begin{align*}
\omega(K_{(P,w)}(\x)) &= K_{(P^*,w)}(\x),\\
\rho(K_{(P,w)}(\x)) &= K_{(P^*,w^*)}(\x),\\
\omega\rho(K_{(P,w)}(\x)) &= K_{(P,w^*)}(\x).
\end{align*}
Informally, in terms of the Hasse diagram of $P$, $\rho$ flips $P$ upside down, $\omega\rho$ switches natural edges with strict edges, and $\omega$ does both.
Each of these automorphism can be expressed as the composition of the other two.
For more on these automorphisms, see Section 3.6 in \cite{LuotoMykytiukVanWIlligenburg}. 

The authors of \cite{BallantineEtc} show that $\omega(\psi_\alpha) = (-1)^{|\alpha| - l(\alpha)}\psi_{\alpha^{\textrm{rev}}}$ (though they do not give an expansion for the other two automorphisms). This result is easy to deduce given the earlier results in this section, as we now show.
\begin{thm} [\cite{BallantineEtc}] \label{prop:omega}
For any composition $\alpha$, $\omega(\psi_\alpha) = (-1)^{|\alpha| - l(\alpha)} \psi_{\alpha^{\textrm{rev}}}$.
\end{thm}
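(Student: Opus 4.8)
The plan is to read off the $\psi$-coefficients of $\omega(\psi_\alpha)$ directly via Theorem~\ref{coeff of psi alpha}, which expresses the coefficient of $\psi_\beta$ in any $f \in \QSym$ as $\MinOne^{\otimes l(\beta)}(\Delta_\beta f)$. Applying this to $f = \omega(\psi_\alpha)$ reduces the statement to understanding how $\omega$ interacts with (a) the functional $\MinOne$ and (b) the graded coproduct $\Delta_\beta$. Both interactions can be extracted from results already in this section.

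First I would record the two interaction facts. For (a), using the explicit values of $\MinOne$ and of $\omega$ on the fundamental basis $\{L_\alpha\}$, one checks that $\MinOne$ and $\MinOne \circ \omega$ are supported on the same compositions $(1^j, n-j)$ and that $\MinOne \circ \omega = (-1)^{n-1}\MinOne$ on $\QSym_n$; combined with $\MinOne(\psi_\gamma) = \delta_{l(\gamma),1}$ from Lemma~\ref{MaxOne expansion}, this gives $\MinOne(\omega(\psi_\gamma)) = (-1)^{|\gamma|-1}\,\delta_{l(\gamma),1}$. For (b), I would use $\omega(K_{(P,w)}(\x)) = K_{(P^*,w)}(\x)$ together with the fact that the order ideals of $P^*$ are exactly the order filters of $P$: the chains of order ideals computing the iterated coproduct of $P^*$ are the reverses of those computing it for $P$, with each block replaced by its dual. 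This shows $\omega$ is a coalgebra anti-homomorphism, so at the graded level $\Delta_\beta \circ \omega = (\omega^{\otimes l}) \circ \tau_{\mathrm{rev}} \circ \Delta_{\beta^{\mathrm{rev}}}$, where $l = l(\beta)$ and $\tau_{\mathrm{rev}}$ reverses all tensor factors.

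Combining the two facts, the coefficient $c_\beta$ of $\psi_\beta$ in $\omega(\psi_\alpha)$ equals $\MinOne^{\otimes l}\bigl((\omega^{\otimes l})(\tau_{\mathrm{rev}}(\Delta_{\beta^{\mathrm{rev}}}\psi_\alpha))\bigr)$. Expanding $\Delta_{\beta^{\mathrm{rev}}}\psi_\alpha = \sum \psi_{\gamma^{(1)}} \otimes \cdots \otimes \psi_{\gamma^{(l)}}$ over factorizations $\gamma^{(1)} \cdots \gamma^{(l)} = \alpha$ with $\gamma^{(i)} \vDash \beta_{l-i+1}$, each summand contributes $\prod_i \MinOne(\omega(\psi_{\gamma^{(i)}}))$ (the reversal $\tau_{\mathrm{rev}}$ is harmless, since we are multiplying scalars). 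By fact (a) this product vanishes unless every $\gamma^{(i)}$ is a single part, which forces $\gamma^{(1)}\cdots\gamma^{(l)} = \beta^{\mathrm{rev}}$; comparing with $\gamma^{(1)}\cdots\gamma^{(l)} = \alpha$ forces $\beta = \alpha^{\mathrm{rev}}$. In that case there is a unique surviving term, equal to $\prod_i (-1)^{\beta_{l-i+1}-1} = (-1)^{|\alpha|-l(\alpha)}$, giving exactly $\omega(\psi_\alpha) = (-1)^{|\alpha|-l(\alpha)}\psi_{\alpha^{\mathrm{rev}}}$.

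I expect the main obstacle to be establishing the two interaction facts cleanly, and above all getting the reversal in (b) right. The anti-homomorphism property (the factor $\tau_{\mathrm{rev}}$) is precisely what turns $\Delta_\beta$ into $\Delta_{\beta^{\mathrm{rev}}}$ and thereby produces $\psi_{\alpha^{\mathrm{rev}}}$ rather than $\psi_\alpha$; dropping it would yield the wrong index. Likewise the sign in (a) must be tracked carefully: it is $(-1)^{n-1}$, not $1$, and it is exactly what accumulates into the global sign $(-1)^{|\alpha|-l(\alpha)}$. Once these two facts are in hand, the remainder is a short bookkeeping computation, and the simple behavior of $\MinOne$ on the $\psi$-basis makes the final extraction immediate.
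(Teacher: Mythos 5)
Your proposal is correct and follows essentially the same route as the paper's proof: both establish that $\MinOne \circ \omega = (-1)^{n-1}\MinOne$ on $\QSym_n$ via the fundamental basis, both use the anti-homomorphism identity $\Delta_\beta \circ \omega = \rev\bigl(\omega^{\otimes l(\beta)} \circ \Delta_{\beta^{\mathrm{rev}}}\bigr)$, and both then extract the unique surviving term via Theorem~\ref{coeff of psi alpha}. The only cosmetic difference is that you justify the anti-homomorphism property through the poset interpretation ($\omega(K_{(P,w)}(\x)) = K_{(P^*,w)}(\x)$ and duality of ideals and filters, checked on a spanning set), whereas the paper reads it off from the action of $\omega$ on the fundamental basis; both justifications are valid.
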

\begin{proof}
If $\alpha = (1^k,n-k)$, then $(\alpha^{c})^{\textrm{rev}} = (1^{n-k-1}, k+1)$. Hence, for all compositions $\alpha$, $\MinOne \circ \omega(L_\alpha) = (-1)^{|\alpha|-1} \MinOne(L_\alpha)$. In other words, $\MinOne \circ \omega$ acts like $(-1)^{n-1} \MinOne$ on $\QSym_n$.

By Theorem~\ref{coeff of psi alpha}, the coefficient of $\psi_\beta$ in $\omega(\psi_\alpha)$ is $\MinOne^{\otimes l(\beta)} \Delta_{\beta} (\omega(\psi_\alpha))$. From the action of $\omega$ on the fundamental basis $L_\alpha$ as given above, we see that 
\[\Delta_\beta \circ \omega = \rev(\omega^{\otimes l(\beta)} \circ \Delta_{\beta^{\textrm{rev}}}),\]
where $\rev$ is the automorphism of $\QSym^{\otimes l(\beta)}$ that reverses tensor factors. 
Thus since $\MinOne^{\otimes l(\beta)} \circ \rev = \MinOne^{\otimes l(\beta)}$,
\[\MinOne^{\otimes l(\beta)} \Delta_{\beta} (\omega(\psi_\alpha)) = (\MinOne \circ \omega)^{\otimes l(\beta)} \Delta_{\beta^{\textrm{rev}}}(\psi_\alpha).\]
This is only nonzero if $\beta = \alpha^{\textrm{rev}}$, in which case it equals $\prod_i (-1)^{|\alpha_i|-1} = (-1)^{|\alpha| - l(\alpha)}$.
\end{proof}

Since $\MinOne(L_\alpha) = \MaxOne(L_{\alpha^{\textrm{rev}}})$, it follows that $\MinOne \circ \rho = \MaxOne$.  This allows us to give an expansion for $\omega\rho(\psi_\alpha)$ in the type 1 quasisymmetric power sum basis.
\begin{thm}
Let $\rho(\psi_\alpha) = \sum_{\beta} c_\beta \psi_\beta$. Then
\[c_\beta = \MaxOne^{\otimes l(\beta)} \Delta_{\beta^{\mathrm{rev}}}(\psi_\alpha).\]
\end{thm}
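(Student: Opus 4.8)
The plan is to mirror the proof of Theorem~\ref{prop:omega}, replacing the automorphism $\omega$ by $\rho$ throughout. The starting point is Theorem~\ref{coeff of psi alpha}, which expresses the coefficient of $\psi_\beta$ in any quasisymmetric function $f$ as $\MinOne^{\otimes l(\beta)}(\Delta_\beta f)$. Applying this to $f = \rho(\psi_\alpha)$ gives $c_\beta = \MinOne^{\otimes l(\beta)}(\Delta_\beta \rho(\psi_\alpha))$, so everything reduces to understanding how the graded comultiplication interacts with $\rho$.

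The key step is to establish the coproduct--reversal identity
\[\Delta_\beta \circ \rho = \rev\bigl(\rho^{\otimes l(\beta)} \circ \Delta_{\beta^{\mathrm{rev}}}\bigr),\]
exactly analogous to the identity $\Delta_\beta \circ \omega = \rev(\omega^{\otimes l(\beta)}\circ \Delta_{\beta^{\mathrm{rev}}})$ used for $\omega$. I would derive this directly from the action $\rho(L_\gamma) = L_{\gamma^{\mathrm{rev}}}$ on the fundamental basis: if $\gamma^{\mathrm{rev}}$ splits as a concatenation or near-concatenation $\delta \cdot \epsilon$ or $\delta \odot \epsilon$, then $\gamma$ itself splits as $\epsilon^{\mathrm{rev}} \cdot \delta^{\mathrm{rev}}$ or $\epsilon^{\mathrm{rev}} \odot \delta^{\mathrm{rev}}$, since reversal turns a prefix/suffix decomposition into a suffix/prefix decomposition and commutes with the joining of adjacent parts. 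Hence $\Delta \circ \rho = \rev \circ (\rho \otimes \rho) \circ \Delta$ at the level of the ordinary coproduct, and iterating (which reverses the order of the block sizes, producing the $\beta^{\mathrm{rev}}$) yields the graded statement.

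With this identity in hand the computation is immediate. Since $\MinOne^{\otimes l(\beta)}$ is unchanged by $\rev$ (all tensor factors carry the same functional), we obtain
\[c_\beta = \MinOne^{\otimes l(\beta)}\bigl(\rho^{\otimes l(\beta)}\Delta_{\beta^{\mathrm{rev}}}(\psi_\alpha)\bigr) = (\MinOne \circ \rho)^{\otimes l(\beta)}\bigl(\Delta_{\beta^{\mathrm{rev}}}(\psi_\alpha)\bigr),\]
and the stated fact $\MinOne \circ \rho = \MaxOne$ gives $c_\beta = \MaxOne^{\otimes l(\beta)}(\Delta_{\beta^{\mathrm{rev}}}(\psi_\alpha))$, as desired.

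I expect the only genuine obstacle to be the careful verification of the coproduct--reversal identity, in particular checking that the near-concatenation case behaves correctly under reversal and that iterating the ordinary-coproduct identity produces $\Delta_{\beta^{\mathrm{rev}}}$ rather than $\Delta_\beta$. Both of these are bookkeeping points rather than conceptual ones, and the completely parallel argument in the $\omega$ case already confirms the pattern; everything else follows formally from the results established earlier in this section.
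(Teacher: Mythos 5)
Your proposal is correct and follows essentially the same route as the paper: apply Theorem~\ref{coeff of psi alpha} to $\rho(\psi_\alpha)$, use the identity $\Delta_\beta \circ \rho = \rev(\rho^{\otimes l(\beta)} \circ \Delta_{\beta^{\mathrm{rev}}})$ together with the invariance of $\MinOne^{\otimes l(\beta)}$ under $\rev$, and finish with $\MinOne \circ \rho = \MaxOne$. The only difference is that you spell out the verification of the coproduct--reversal identity from $\rho(L_\gamma) = L_{\gamma^{\mathrm{rev}}}$ (your bookkeeping of the concatenation and near-concatenation cases is correct), whereas the paper simply asserts it by analogy with the $\omega$ case in Theorem~\ref{prop:omega}.
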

\begin{proof}
Using the fact that $\MinOne \circ \rho = \MaxOne$, we have from Theorem~\ref{coeff of psi alpha} that
\[c_\beta = \MinOne^{\otimes l(\beta)}\Delta_{\beta}(\rho(\psi_\alpha)) = (\MinOne \circ \rho)^{\otimes l(\beta)} \Delta_{\beta^{\textrm{rev}}} (\psi_\alpha) = \MaxOne ^{\otimes l(\beta)} \Delta_{\beta^{\textrm{rev}}} (\psi_\alpha).\]
Here we use the fact that, as with $\omega$ in Theorem~\ref{prop:omega}, $\Delta_\beta \circ \rho = \rev(\rho^{\otimes l(\beta)} \circ \Delta_{\beta^{\textrm{rev}}})$.
\end{proof}

Combining these results immediately gives the following theorem.
\begin{thm}
Let $\omega\rho(\psi_\alpha) = \sum_\beta c_\beta \psi_\beta$. Then 
\[ c_\beta = (-1)^{|\beta| - l(\beta)}\MaxOne^{\otimes l(\beta)}\Delta_{\beta}(\psi_\alpha).\]
\end{thm}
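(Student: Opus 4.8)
The plan is to write $\omega\rho = \omega \circ \rho$ and apply the formulas for $\rho$ and for $\omega$ on the $\psi$-basis in that order. First one checks that this factorization is correct on the fundamental basis, since $\omega(\rho(L_\alpha)) = \omega(L_{\alpha^{\textrm{rev}}}) = L_{((\alpha^{\textrm{rev}})^c)^{\textrm{rev}}} = L_{\alpha^c} = \omega\rho(L_\alpha)$. The reason to apply $\rho$ \emph{first} and $\omega$ second is that this produces the clean expression in terms of $\Delta_\beta$; applying $\omega$ first would instead leave a $\Delta_{\beta^{\textrm{rev}}}(\psi_{\alpha^{\textrm{rev}}})$ together with an extra global sign, which is less transparent.

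Concretely, I would first use the preceding theorem to expand
\[ \rho(\psi_\alpha) = \sum_\gamma d_\gamma\, \psi_\gamma, \qquad d_\gamma = \MaxOne^{\otimes l(\gamma)}\Delta_{\gamma^{\textrm{rev}}}(\psi_\alpha), \]
and then apply $\omega$ term by term via Theorem~\ref{prop:omega}, using $\omega(\psi_\gamma) = (-1)^{|\gamma| - l(\gamma)}\psi_{\gamma^{\textrm{rev}}}$, to obtain
\[ \omega\rho(\psi_\alpha) = \sum_\gamma (-1)^{|\gamma| - l(\gamma)}\, d_\gamma\, \psi_{\gamma^{\textrm{rev}}}. \]
Reindexing by $\beta = \gamma^{\textrm{rev}}$ (a bijection on compositions of $n$), the coefficient of $\psi_\beta$ is $c_\beta = (-1)^{|\gamma| - l(\gamma)} d_\gamma$ evaluated at $\gamma = \beta^{\textrm{rev}}$.

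The final step is pure bookkeeping: reversal preserves size and length, so $(-1)^{|\gamma|-l(\gamma)} = (-1)^{|\beta|-l(\beta)}$, and since $\gamma^{\textrm{rev}} = (\beta^{\textrm{rev}})^{\textrm{rev}} = \beta$ we have $d_{\beta^{\textrm{rev}}} = \MaxOne^{\otimes l(\beta)}\Delta_\beta(\psi_\alpha)$. Substituting yields $c_\beta = (-1)^{|\beta| - l(\beta)}\MaxOne^{\otimes l(\beta)}\Delta_\beta(\psi_\alpha)$, as claimed. I expect no real obstacle beyond correctly tracking the reversals and the sign; all of the substantive content is already contained in Theorem~\ref{prop:omega} and in the theorem computing $\rho(\psi_\alpha)$, so this result is genuinely a corollary of the two.
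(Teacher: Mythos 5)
Your proof is correct and is essentially the paper's own argument: the paper derives this theorem by ``combining these results,'' i.e., expanding $\rho(\psi_\alpha)$ via the preceding theorem and then applying $\omega(\psi_\gamma) = (-1)^{|\gamma|-l(\gamma)}\psi_{\gamma^{\textrm{rev}}}$ from Theorem~\ref{prop:omega}, exactly as you do, with your reindexing $\beta = \gamma^{\textrm{rev}}$ and the observation that reversal preserves size and length supplying the bookkeeping the paper leaves implicit. Your remark about why $\rho$ should be applied first is a correct and sensible clarification, not a deviation from the paper's route.
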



In particular, $\psi_{\beta}$ can only appear in the expansion of $\rho(\psi_\alpha)$ if $\beta^{\textrm{rev}}$ coarsens $\alpha$, and likewise $\psi_{\beta}$ can only appear in the expansion of $\omega\rho(\psi_{\alpha})$ if $\beta$ coarsens $\alpha$. Of course, these coefficients can be computed explicitly using Lemma~\ref{MaxOne expansion}.

\begin{example}
By the previous theorem, the expansion of $\omega\rho(\psi_{3421})$ in the $\psi_\alpha$ basis is
\[\omega\rho(\psi_{3421}) = \psi_{3421} + \frac{1}{2}\psi_{343} + \frac{1}{4}\psi_{361} + \frac{1}{8}\psi_{37} - \frac{1}{12}\psi_{721} - \frac{1}{24}\psi_{73} - \frac{1}{28}\psi_{91} - \frac{4}{189}\psi_{10}.\]
\end{example}

\section{Irreducibility of $\KP$}
In this section, we will restrict our attention to the case when $P$ is naturally labeled. We will show that in this case, $K_P(\x)$ is irreducible whenever $P$ is connected, which partially answers a question from \cite{McNamaraWard}.  

We first give a brief outline of the proof. We begin by showing that $\KP$ is irreducible whenever the truncation $\KPmin$ of $\KP$ to its terms of shortest length is irreducible, and we give a combinatorial description of the $\psi_\alpha$-expansion of $\KPmin$.  We then use Lemma~\ref{MaxOne expansion} to evaluate $\MaxOne(\KPmin)$ as an alternating sum. By interpreting the terms in this sum combinatorially using inclusion-exclusion, we show that $\MaxOne(\KPmin)$ counts certain zigzag labelings of $P$ that exist if and only if $P$ is connected. In particular, $\MaxOne(\KPmin) > 0$ when $P$ is connected, so by Lemma~\ref{Max One product zero}, we can then conclude our result.

As a remark, it was shown in \cite{LamPylyavskyy} that a homogeneous quasisymmetric function is reducible in $\QSym$ if and only if it is reducible in the ring $\mathbb C[[x_1, x_2, \dots]]$ of formal power series of bounded degree. (Though the authors of \cite{LamPylyavskyy} work over $\mathbb Z$ instead of $\mathbb C$, the results of that paper as well as this one hold in either context.) Note also that by standard degree considerations, if a homogeneous quasisymmetric function is reducible, then its irreducible factors must be homogeneous as well.

\subsection{Minimal length}
Given a poset $P$ of size $n$, let $m$ be the minimum length of a composition in the $\psi_\alpha$-expansion of $\KP$. (As we will see shortly, $m$ will equal the number of minimal elements of $P$.) Then define $\KPmin$ to be the sum of the terms in the $\psi_\alpha$-expansion of $\KP$ of this minimum length $m$. In other words, $\KPmin$ is the projection of $\KP$ to $\QSym_{n,m}$.


\begin{example}  Let $P$ be the following $5$-element poset.
\[
\begin{tikzpicture}
 [auto,
 vertex/.style={circle,draw=black!100,fill=black!100,thick,inner sep=0pt,minimum size=1mm}, scale = .7]
\node (v1) at ( 0,0) [vertex] {};
\node (v2) at ( 0,1) [vertex] {};
\node (v3) at (0,2) [vertex] {};
\node (v4) at ( 1,0) [vertex] {};
\node (v5) at ( 1,1) [vertex] {};
\draw [-] (v1) to (v2);
\draw [-] (v2) to (v3);
\draw [-] (v4) to (v5);
\draw [-] (v1) to (v5);
\draw [-] (v4) to (v3);
\end{tikzpicture}
\]
Then
\begin{align*}
\KP &= \psi_{23} + \psi_{14}  + 2\psi_{221}  + 2\psi_{131} + 2\psi_{122} + \psi_{113} \\
& \quad + 2\psi_{2111} + 4\psi_{1211} + 4\psi_{1121} + 2\psi_{1112} + 8\psi_{11111}, \\
\KPmin &= \psi_{23} + \psi_{14}.
\end{align*}
\end{example}

Just as with the usual notion of degree, we can check for irreducibility of a quasisymmetric function by checking irreducibility on the part of minimal length.

\begin{lemma}\label{Reducible KP}
If $\KP$ is reducible, then $\KPmin$ is also reducible.
\end{lemma}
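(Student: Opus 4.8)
The plan is to exploit the fact that the length grading on $\QSym$ is multiplicative: as recorded above, if $f \in \QSym_{n_1, m_1}$ and $g \in \QSym_{n_2, m_2}$, then $fg \in \QSym_{n_1 + n_2, m_1 + m_2}$. Together with the usual degree grading, this makes $\QSym$ a bigraded algebra, and the key point is that extracting the component of minimal length is compatible with taking products. So if $\KP$ factors, I expect its minimal-length part $\KPmin$ to be the product of the minimal-length parts of the two factors.

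To carry this out, suppose $\KP = f \cdot g$ with $f, g$ non-constant. By the degree considerations noted in the remark above, I may assume $f$ and $g$ are homogeneous of degrees $n_1, n_2 \geq 1$ with $n_1 + n_2 = n$. Now decompose each factor into its length-homogeneous components, writing $f = \sum_m f^{(m)}$ with $f^{(m)} \in \QSym_{n_1, m}$ and $g = \sum_m g^{(m)}$ with $g^{(m)} \in \QSym_{n_2, m}$, and let $m_1$ (resp.\ $m_2$) be the smallest index with $f^{(m_1)} \neq 0$ (resp.\ $g^{(m_2)} \neq 0$). Expanding, $fg = \sum_{m, m'} f^{(m)} g^{(m')}$, with each summand lying in $\QSym_{n, m + m'}$. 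Since $m \geq m_1$ and $m' \geq m_2$ always, the only summand of total length $m_1 + m_2$ is $f^{(m_1)} g^{(m_2)}$, so this is precisely the projection of $\KP$ onto $\QSym_{n, m_1 + m_2}$.

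It remains to verify that this projection is nonzero and that $m_1 + m_2$ is genuinely the minimal length occurring, which is where the one real point arises: because $\QSym$ is an integral domain (being a polynomial algebra), the product $f^{(m_1)} g^{(m_2)}$ of nonzero elements is nonzero. Hence no shorter length appears in $\KP$, the minimal length is $m = m_1 + m_2$, and $\KPmin = f^{(m_1)} \cdot g^{(m_2)}$. Finally, $f^{(m_1)}$ and $g^{(m_2)}$ have positive degrees $n_1$ and $n_2$, so both are non-constant, exhibiting $\KPmin$ as a product of two non-constant elements and hence reducible. The only subtlety that needs care is the non-cancellation at minimal length just described; everything else is bookkeeping with the bigrading, so I do not expect a substantial obstacle.
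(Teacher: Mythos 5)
Your proof is correct and takes essentially the same approach as the paper: length gives a grading on $\QSym$ (since multiplication in the $\psi$-basis is the shuffle product), so the minimal-length part of a product is the product of the minimal-length parts of its factors, and hence $\KPmin = \tilde{f}\cdot\tilde{g}$. The only difference is that you make explicit the non-vanishing of this product via the integral-domain property of $\QSym$, a point the paper leaves implicit.
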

\begin{proof}
Recall that multiplication in terms of the type 1 quasisymmetric power sum basis is the shuffle product, so length gives a grading on $\QSym$. If $\KP$ is reducible, then $\KP$ can be expressed as $\KP = f\cdot g$ for some nonconstant homogeneous $f,g \in \QSym$. If $\tilde{f}$ and $\tilde{g}$ are the terms of shortest length in $f$ and $g$, respectively, then $\KPmin = \tilde{f} \cdot \tilde{g}$.
\end{proof}
It follows that if $\KPmin$ is irreducible, then $\KP$ must also be irreducible.
%
%
%

We now give a combinatorial interpretation for $\KPmin$ in terms of certain special pointed $P$-partitions. 
Let $\{z_1, \dots, z_m\}$ be the set of minimal elements of $P$.  For any subset $\{i_1, \dots, i_l\} \subseteq [m]$, we will denote by $\V(z_{i_1}, \dots, z_{i_l})$ the filter of $P$ whose minimal elements are $\{z_{i_1}, \dots, z_{i_l}\}$.


\begin{defn}
Suppose $P$ has $m$ minimal elements and $\sigma \in S_m$. The \emph{$\sigma$-partition} $\theta_\sigma$ is the pointed $P$-partition that sends any element $x \in P$ to the largest number $j$ such that $z_{\sigma_j} \preceq x$. In other words, $\theta_\sigma^{-1}(i) = \V(z_{\sigma_i}) \setminus \V(z_{\sigma_{i+1}}, \dots, z_{\sigma_{m}})$ for $i = 1, \dots, m$.
\end{defn}

Denote by $\alpha(\sigma)$ the weight of $\theta_\sigma$. Explicitly,
\begin{align*}
\alpha(\sigma)_i &= |\V(z_{\sigma_i}) \setminus \V(z_{\sigma_{i+1}}, \dots, z_{\sigma_m})|\\
&= |\V(z_{\sigma_i}, \dots, z_{\sigma_m})| - |\V( z_{\sigma_{i+1}}, \dots, z_{\sigma_m})|.
\end{align*}

\begin{example}\label{Ex sigma partition}
Let $P$ be the poset shown below (on the left). 
On the right is the $\sigma$-partition of $P$ when $\sigma = 312$ with weight $\alpha(\sigma) = (1,2,5)$.
\[
\begin{tikzpicture}
 [auto,
 vertex/.style={circle,draw=black!100,fill=black!100,thick,inner sep=0pt,minimum size=1mm}, scale = .9]
\node (v1) at ( 0,0) [vertex, label=below:$z_2$] {};
\node (v2) at ( 0,1) [vertex] {};
\node (v3) at (.5,2) [vertex] {};
\node (v4) at ( 1,0) [vertex, label=below:$z_3$] {};
\node (v5) at ( 1.2,1) [vertex] {};
\node (v6) at ( -1,0) [vertex, label=below:$z_1$] {};
\node (v7) at (-1.2, 1) [vertex] {};
\node (v8) at (-.5, 2) [vertex] {};
\draw [-] (v1) to (v2);
\draw [-] (v2) to (v3);
\draw [-] (v4) to (v5);
\draw [-] (v1) to (v5);
\draw [-] (v4) to (v3);
\draw [-] (v6) to (v2);
\draw [-] (v6) to (v7);
\draw [-] (v2) to (v8);
\end{tikzpicture}
\qquad\qquad
\begin{tikzpicture}
[auto,
vertex/.style={circle,draw=black!100,fill=black!100,thick,inner sep=0pt,minimum size=1mm}, scale = .9]
\node (v1) at ( 0,0) [vertex, label=below:$3$] {};
\node (v2) at ( 0,1) [vertex, label=right:$3$] {};
\node (v3) at (.5,2) [vertex, label=right:$3$] {};
\node (v4) at ( 1,0) [vertex, label=below:$1$] {};
\node (v5) at ( 1.2,1) [vertex, label=right:$3$] {};
\node (v6) at ( -1,0) [vertex, label=below:$2$] {};
\node (v7) at (-1.2, 1) [vertex, label=right:$2$] {};
\node (v8) at (-.5, 2) [vertex, label=right:$3$] {};
\draw [-] (v1) to (v2);
\draw [-] (v2) to (v3);
\draw [-] (v4) to (v5);
\draw [-] (v1) to (v5);
\draw [-] (v4) to (v3);
\draw [-] (v6) to (v2);
\draw [-] (v6) to (v7);
\draw [-] (v2) to (v8);
\end{tikzpicture}
\]

\end{example}



\begin{lemma}\label{KPmin Expansion}
Let $P$ be a poset with $m$ minimal elements.  Then
\[ \KPmin = \sum_{\sigma \in S_m} \psi_{\alpha(\sigma)} .\]
\end{lemma}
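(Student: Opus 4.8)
The plan is to read off $\KPmin$ directly from the pointed $P$-partition expansion $\KP = \sum_\theta \psi_{\wt(\theta)}$ of Corollary~\ref{Naturally Labeled Expansion}. Since the length of $\psi_{\wt(\theta)}$ equals the number of parts of $\wt(\theta)$, which is precisely the number $k$ of distinct values taken by $\theta$ (the size of its image $[k]$), extracting the minimal-length part of $\KP$ amounts to summing $\psi_{\wt(\theta)}$ over those pointed $P$-partitions $\theta$ whose image is as small as possible. I would show that this minimum is $m$ and that the pointed $P$-partitions achieving it are exactly the $\sigma$-partitions $\theta_\sigma$ for $\sigma \in S_m$.

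First I would establish the lower bound $k \geq m$. If two distinct minimal elements $z_a, z_b$ of $P$ were to lie in the same level $\theta^{-1}(i)$, then each would be minimal within $\theta^{-1}(i)$ (nothing of $P$ lies below a minimal element of $P$), contradicting the requirement that $\theta^{-1}(i)$ have a unique minimal element. Hence the $m$ minimal elements occupy $m$ distinct levels, forcing $k \geq m$; in particular every term of $\KP$ has length at least $m$.

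Next I would verify that each $\theta_\sigma$ is a genuine pointed $P$-partition with exactly $m$ levels, and conversely that every pointed $P$-partition with exactly $m$ levels equals some $\theta_\sigma$. The structural fact underlying both directions is that a finite poset with a unique minimal element has that element as a global minimum: for any $x$ some minimal element lies below $x$, and uniqueness forces it to be the distinguished one. Applied to $\theta^{-1}(i)$, this shows its unique minimal element, say $z_{\sigma_i}$, satisfies $z_{\sigma_i} \preceq x$ for all $x \in \theta^{-1}(i)$. Combined with order-preservation of $\theta$ (for a naturally labeled poset, $P$-partitions are exactly the order-preserving maps, since condition~(b) is vacuous), this pins down $\theta(x) = \max\{\,j : z_{\sigma_j} \preceq x\,\}$, i.e.\ $\theta = \theta_\sigma$: indeed $z_{\sigma_i} \preceq x$ gives $\theta_\sigma(x) \geq i$, while $z_{\sigma_j} \preceq x$ implies $j = \theta(z_{\sigma_j}) \leq \theta(x) = i$ and hence $\theta_\sigma(x) \leq i$. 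In the reverse direction, the formula $\theta_\sigma^{-1}(i) = \V(z_{\sigma_i}) \setminus \V(z_{\sigma_{i+1}}, \dots, z_{\sigma_m})$ exhibits $z_{\sigma_i}$ as the minimum of level $i$, so $\theta_\sigma$ is pointed, surjects onto $[m]$, and has weight $\alpha(\sigma)$; moreover $\sigma$ is recovered from $\theta_\sigma$ as the order in which the minimal elements appear across the levels, so distinct permutations give distinct $\sigma$-partitions.

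Putting these together, the terms of $\KP$ of minimal length $m$ are exactly $\sum_{\sigma \in S_m} \psi_{\wt(\theta_\sigma)} = \sum_{\sigma \in S_m} \psi_{\alpha(\sigma)}$, which is the projection $\KPmin$ of $\KP$ onto $\QSym_{n,m}$. I expect the main obstacle to be the rigidity argument of the third paragraph, namely showing that a minimal-length pointed $P$-partition is completely determined by which minimal element sits in each level. Everything there rests on the observation that a unique minimal element of a finite poset is in fact a minimum, together with order-preservation; once that is in hand, the two inequalities bounding $\theta_\sigma(x)$ above and below by $i$ are immediate, and the remaining verifications are routine.
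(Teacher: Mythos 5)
Your proof is correct and takes essentially the same approach as the paper: both start from Corollary~\ref{Naturally Labeled Expansion}, get the lower bound on length by noting that distinct minimal elements of $P$ must occupy distinct levels of any pointed $P$-partition, and then show that the pointed $P$-partitions with exactly $m$ levels are precisely the $\theta_\sigma$. The only cosmetic difference is in the rigidity step: the paper peels off principal filters from the top ($\theta^{-1}(m) = \V(z_{\sigma_m})$, then induct downward), whereas you give a direct two-sided bound showing $\theta(x) = \theta_\sigma(x)$ for each $x$; the two arguments rest on the same structural fact and are interchangeable.
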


\begin{proof}
This follows from Corollary \ref{Naturally Labeled Expansion}. Note that in any pointed $P$-partition, the minimal elements of $P$ must be sent to different values. Hence the minimum length part of $\KPmin$ has length (at least) $m$.

If $\theta \colon P \to [m]$ is a pointed $P$-partition, then there exists some permutation $\sigma$ such that $\theta(z_{\sigma_i}) = i$ for all $i$. Since $\theta^{-1}(m)$ must be a principal filter of $P$, it must be $V(z_{\sigma_m})$. Similarly, $\theta^{-1}(m-1)$ must be a principal filter of $P \setminus V(z_{\sigma_m})$, so it must be $V(z_{\sigma_{m-1}}) \setminus V(z_{\sigma_m})$. Continuing in this manner, we see that we must have $\theta = \theta_\sigma$, and the result follows.
\end{proof}

%

\begin{example}
Let $P$ be the poset shown in Example \ref{Ex sigma partition}. One can compute for all $\sigma \in S_3$:
\begin{align*}
\alpha(123) &= (2,3,3),& \alpha(132) &=  (2,1,5),&\alpha(213) &= (1,4,3),\\
 \alpha(231) &= (1,2,5),&\alpha(312) &= (1,2,5),&\alpha(321) &= (1,2,5).
 \end{align*}
It follows from Lemma \ref{KPmin Expansion} that
\[\KPmin = \psi_{233} + \psi_{215} + \psi_{143} + 3\psi_{125}.\]
\end{example}

\subsection{Evaluation of $\MaxOne$}
We will now give a combinatorial interpretation for the value of $\MaxOne$ when applied to some term $\psi_{\alpha(\sigma)}$ appearing in $\KPmin$. To do this, we will need the following technical notion.

\begin{defn}
Let $\pi, \sigma \in S_m$ and let $\{z_1, \dots, z_m\}$ be the set of minimal elements of a poset $P$.  We say that a bijection $\phi \colon P \to [n]$ is a \emph{$(\pi, \sigma)$-labeling} if the following are true:
\begin{enumerate}
\item $\phi(z_{\pi_1}) > \phi(z_{\pi_2}) > \dots > \phi(z_{\pi_m})$; and
\item for all $y \in P$, $\phi(y) \geq \phi(z)$, where $z$ is the minimal element such that $\theta_\sigma(z) = \theta_\sigma(y)$.
\end{enumerate}
Denote by $T_P(\pi, \sigma)$ the set of all $(\pi,\sigma)$-labelings of $P$.
\end{defn}

In other words, unraveling the definition of $\theta_\sigma$ in condition (2), we must have  $\phi(y) \geq \phi(z_i)$, where $i$ is the number appearing latest in the word of $\sigma$ such that $y \succeq z_i$.

\begin{example} \label{ex:tp}
Shown below are the $1423$-partition $\theta$ of a poset $P$ (left), and an example of a $(2314, 1423)$-labeling $\phi$ (right).  Here $z_1, z_2 , z_3, z_4$ are taken to be the minimal elements from left to right.

\begin{center}
\begin{tikzpicture}
 [auto,
 vertex/.style={circle,draw=black!100,fill=black!100,thick,inner sep=0pt,minimum size=1mm}]
\node (v1) at ( 0,0) [vertex, label=left:$1$] {};
\node (v2) at ( 1,0) [vertex, label=left:$3$] {};
\node (v3) at (2,0) [vertex, label=right:$4$] {};
\node (v4) at ( 3,0) [vertex, label=right:$2$] {};
\node (v5) at (0,1) [vertex, label=left:$1$] {};
\node (v6) at (1,1) [vertex, label=left:$4$] {};
\node (v7) at (2, 1) [vertex, label=right:$3$] {};
\node (v8) at (3, 1) [vertex, label=right:$2$] {};
\draw [-] (v1) to (v5);
\draw [-] (v1) to (v6);
\draw [-] (v2) to (v6);
\draw [-] (v2) to (v7);
\draw [-] (v3) to (v6);
\draw [-] (v4) to (v7);
\draw [-] (v4) to (v8);
\end{tikzpicture}
\hspace{1 cm}
\begin{tikzpicture}
 [auto,
 vertex/.style={circle,draw=black!100,fill=black!100,thick,inner sep=0pt,minimum size=1mm}]
\node (v1) at ( 0,0) [vertex, label=left:$2$] {};
\node (v2) at ( 1,0) [vertex, label=left:$7$] {};
\node (v3) at (2,0) [vertex, label=right:$5$] {};
\node (v4) at ( 3,0) [vertex, label=right:$1$] {};
\node (v5) at (0,1) [vertex, label=left:$3$] {};
\node (v6) at (1,1) [vertex, label=left:$6$] {};
\node (v7) at (2, 1) [vertex, label=right:$8$] {};
\node (v8) at (3, 1) [vertex, label=right:$4$] {};
\draw [-] (v1) to (v5);
\draw [-] (v1) to (v6);
\draw [-] (v2) to (v6);
\draw [-] (v2) to (v7);
\draw [-] (v3) to (v6);
\draw [-] (v4) to (v7);
\draw [-] (v4) to (v8);
\end{tikzpicture}
\end{center}

It is clear that condition $(1)$ holds since $\phi(z_2) > \phi(z_3) > \phi(z_1) > \phi(z_4)$.

For condition (2), consider for instance the element $x$ that covers $z_1$, $z_2$, and $z_3$.  Since $3$ appears last among $1$, $2$, and $3$ in $\sigma = 1423$, it follows that $\theta(x) = \theta(z_3) = \sigma^{-1}(3) = 4$, so we must have $\phi(x) \geq \phi(z_3)$.
\end{example}


Consider the operations $f_i , g_i\colon S_m \rightarrow S_m$ for $i=1, \dots, m-1$ defined by 
\begin{align*}
f_i(\pi) &= \pi_1\cdots\widehat{\pi_i}\cdots\pi_m\pi_i,\\
g_i(\pi) &= \pi_1\cdots\pi_{i-1}\pi_m\pi_i \cdots\pi_{m-1}.
\end{align*}
Observe that $f_i$ and $g_i$ are inverses of each other.  For any subset $S = \{s_1, \dots, s_l\} \subseteq [m-1]$ with $s_1 < \dots < s_l$, define 
\begin{align*}
f_S &= f_{s_1} \circ f_{s_2} \circ \cdots \circ f_{s_l},\\
g_S &= g_{s_l} \circ g_{s_{l-1}} \circ \cdots \circ g_{s_1}.
\end{align*}
Explicitly,
\[f_S(\pi) = \pi_1\cdots\widehat{\pi_{s_1}}\cdots\widehat{\pi_{s_2}}\cdots\widehat{\pi_{s_l}}\cdots\pi_m\pi_{s_l}\pi_{s_{l-1}}\cdots\pi_{s_1},\]
and $g_S$ is the inverse of $f_S$.

We can now give the following description of $\MaxOne(\psi_{\alpha(\sigma)})$.

\begin{lemma}\label{Expressing Omega Psi}
Let $P$ be a poset of size $n$ with $m$ minimal elements, and let $\sigma \in S_m$. Then
\[(n-1)!\MaxOne(\psi_{\alpha(\sigma)}) = \sum_{S \subseteq [m-1]} (-1)^{|S|}|T_P(g_S(\sigma), \sigma)|. \]
\end{lemma}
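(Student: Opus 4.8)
The plan is to turn both sides into explicit rational functions of the block sizes of $\theta_\sigma$ and then match them against the formula of Lemma~\ref{MaxOne expansion}, using twice the records-counting principle behind Lemma~\ref{counting pi alpha beta}. The starting observation is that $|T_P(\pi,\sigma)|$ depends only on the sizes $\alpha_j=|\theta_\sigma^{-1}(j)|$ of the blocks $B_j=\theta_\sigma^{-1}(j)$ and on $\rho=\sigma^{-1}\pi$: conditions (1) and (2) constrain only the values of the bijection $\phi$ on the $B_j$ and their distinguished minimal elements $z_{\sigma_j}$, and make no reference to the order of $P$ beyond what produces the $B_j$. Replacing $\phi$ by $n+1-\phi$ converts (2) into ``$z_{\sigma_j}$ carries the maximum of $B_j$'' and (1) into ``the block maxima increase in the order $\rho_1,\dots,\rho_m$''; by the usual nested-records reformulation this is equivalent to demanding, for each $k$, that the maximum of $B_{\rho_1}\cup\cdots\cup B_{\rho_k}$ be attained at $z_{\pi_k}$. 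These $m$ events are independent, so
\[
|T_P(\pi,\sigma)|=\frac{n!}{\prod_{k=1}^m(\alpha_{\rho_1}+\cdots+\alpha_{\rho_k})}=\frac{(n-1)!}{\prod_{k=1}^{m-1}(\alpha_{\rho_1}+\cdots+\alpha_{\rho_k})},
\]
the last equality using that the full partial sum equals $n$.

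Next I would pin down the effect of $g_S$. Since each $g_i$, and hence each $g_S$, permutes positions, relabeling the letters of $g_S(\sigma)$ by $\sigma^{-1}$ gives $\sigma^{-1}g_S(\sigma)=g_S(\mathrm{id})$, independently of $\sigma$. Unwinding $g_S=f_S^{-1}$ shows that $\rho^S:=g_S(\mathrm{id})$ is the permutation placing $m,m-1,\dots,m-|S|+1$ in decreasing order at the positions of $S$ and filling the remaining positions with $1,2,\dots,m-|S|$ in increasing order. Substituting $\rho=\rho^S$ into the product formula, the right-hand side of the lemma becomes $(n-1)!$ times $\sum_{S\subseteq[m-1]}(-1)^{|S|}\prod_{k=1}^{m-1}(\alpha_{\rho^S_1}+\cdots+\alpha_{\rho^S_k})^{-1}$.

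Finally I would collapse this $2^{m-1}$-term sum onto the $m$ terms of Lemma~\ref{MaxOne expansion} by grouping the subsets according to $l=|S|$ and setting $i=m-l$. For fixed $l$, as $S$ runs over the $l$-subsets of $[m-1]$ the first $m-1$ entries of $\rho^S$ run over all shuffles of the decreasing word $(m,m-1,\dots,i+1)$ with the increasing word $(1,2,\dots,i-1)$, the final entry being always $i$. Summing $\prod_{k=1}^{m-1}(\alpha_{\rho^S_1}+\cdots+\alpha_{\rho^S_k})^{-1}$ over these shuffles is a second instance of the records principle: the decreasing run imposes the (independent) suffix-record conditions on the nested blocks $B_m\subseteq B_{m-1}\cup B_m\subseteq\cdots$ and the increasing run the prefix-record conditions on $B_1\subseteq B_1\cup B_2\subseteq\cdots$, so the shuffle sum factors as $\pi(\alpha_1\cdots\alpha_{i-1})^{-1}\,\pi((\alpha_{i+1}\cdots\alpha_m)^{\textrm{rev}})^{-1}$. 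Attaching the sign $(-1)^{|S|}=(-1)^{m-i}$ and summing over $l$ then reproduces $\MaxOne(\psi_{\alpha(\sigma)})$ exactly as in Lemma~\ref{MaxOne expansion}. I expect the main obstacle to be precisely this last step: one has to check that the subset-grouping is in bijection with the shuffles and that the shuffle sum splits as a product of two $\pi$-values, which is where the independence of the prefix and suffix record events is essential.
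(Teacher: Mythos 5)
Your proof is correct, and it reaches the identity by reorganizing the paper's argument rather than by a fundamentally different one. The paper never computes any individual $|T_P(\pi,\sigma)|$: for each fixed $i$ it observes that the sets $T_P(g_S(\sigma),\sigma)$ with $|S|=m-i$ are pairwise disjoint, characterizes their union as the set of labelings satisfying $\phi(z_{\sigma_1})>\cdots>\phi(z_{\sigma_{i-1}})>\phi(z_{\sigma_i})<\phi(z_{\sigma_{i+1}})<\cdots<\phi(z_{\sigma_m})$ together with the block-minimum condition, and counts that union in a single application of the independence-of-records argument, landing directly on $(n-1)!/\bigl(\pi(\alpha_1\cdots\alpha_{i-1})\,\pi((\alpha_{i+1}\cdots\alpha_m)^{\textrm{rev}})\bigr)$. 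You instead count each set separately, obtaining $|T_P(\pi,\sigma)|=n!/\prod_{k=1}^{m}(\alpha_{\rho_1}+\cdots+\alpha_{\rho_k})$ with $\rho=\sigma^{-1}\pi$, describe $g_S(\mathrm{id})$ explicitly, and then recombine the $2^{m-1}$ rational terms via the shuffle-sum factorization. Both proofs stand on the same two pillars --- the records/independence principle behind Lemma~\ref{counting pi alpha beta}, and the final matching against Lemma~\ref{MaxOne expansion} --- and your shuffle-sum identity, proved by viewing the sum as the probability of a disjoint union of record events, is exactly the paper's union characterization in algebraic dress. What your organization buys is a finer byproduct: a closed formula for every individual $|T_P(\pi,\sigma)|$, showing it depends only on $\sigma^{-1}\pi$ and the block sizes of $\theta_\sigma$, which the paper never states. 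What the paper's merge-first organization buys is economy: it needs neither the per-set formula, nor the explicit form of $g_S(\mathrm{id})$, nor the shuffle identity, though it does need the disjointness observation that your count-first version can defer (you only need disjointness of events inside the proof of the shuffle identity).
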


\begin{proof}
Note that for fixed $\sigma$, the permutations $g_S(\sigma)$ are distinct for distinct sets $S$, so the sets $T_P(g_S(\sigma), \sigma)$ are all disjoint.

Fix a value of $i$ with $1 \leq i \leq m$. Every labeling $\phi$ of $P$ in the union 
\[\bigcup_{\substack{S \subseteq [m-1] \\|S| = m-i}}T_P(g_S(\sigma), \sigma)\]
must satisfy the following two conditions:
\begin{enumerate}
\item $\phi(z_{\sigma_{1}}) > \phi(z_{\sigma_{2}}) > \dots > \phi(z_{\sigma_{i-1}}) > \phi(z_{\sigma_i})< \phi(z_{\sigma_{i+1}}) < \dots < \phi(z_{\sigma_{m-1}}) <  \phi(z_{\sigma_{m}})$; and
\item for all $y \in P$, $\phi(y) \geq \phi(z)$, where $z$ is the minimal element such that $\theta_\sigma(z) = \theta_\sigma(y)$.
\end{enumerate}
Equivalently, for all $y \in P$, $\phi(y)$ must be at least $\phi(z_{\sigma_j})$ for any $j$ between $i$ and $\theta_\sigma(y)$, inclusive. It follows that (1) and (2) are equivalent to the following:
\begin{itemize}
\item $\phi(z_{\sigma_i})=1$ (which of course is the smallest label among all $n$ elements of $P$);
\item for $j = i-1, i-2, \dots, 1$: $z_{\sigma_j}$ has the smallest label among 
$P \setminus \V(z_{\sigma_{j+1}}, \dots, z_{\sigma_m})$, and
\item for $j = i+1, i+2, \dots, m$: $z_{\sigma_j}$ has the smallest label among $\V(z_{\sigma_{j}}, \dots, z_{\sigma_{m}})$.
\end{itemize}

It is easy to check that for a random bijection from $P$ to $[n]$, these events are all independent of one another. If we let $\alpha = \alpha(\sigma)$, then
\begin{align*}
|P \setminus V(z_{\sigma_{j+1}}, \dots, z_{\sigma_m})| &= \alpha_1 + \cdots + \alpha_j,\\
|V(z_{\sigma_j}, \dots, z_{\sigma_m})| &= \alpha_j + \cdots + \alpha_m.
\end{align*}
Hence the number of such labelings $\phi$ is
\[\frac{n!}{n \cdot \prod_{j=1}^{i-1}(\alpha_1 + \cdots + \alpha_j) \cdot \prod_{j=i+1}^m (\alpha_j + \cdots + \alpha_m)} = \frac{(n-1)!}{\pi(\alpha_1\cdots\alpha_{i-1}) \cdot \pi((\alpha_{i+1}\cdots\alpha_{m})^{\textrm{rev}})}.\]

When we include the $(-1)^{|S|}$ and sum over all values of $i$, we find that the right hand side equals
\[\sum_{i=1}^{m} \frac{(-1)^{m-i}(n-1)!}{\pi(\alpha_1\cdots\alpha_{i-1}) \cdot \pi((\alpha_{i+1}\cdots\alpha_{m})^{\textrm{rev}})}.\]
This value is exactly $(n-1)!\MaxOne(\psi_{\alpha(\sigma)})$ by Lemma~\ref{MaxOne expansion}, as desired.
\end{proof}

The following two lemmas about $T_P(\pi, \sigma)$ will be helpful for giving a combinatorial interpretation for $\MaxOne(\KPmin)$. This is because many of the terms on the right hand side of Lemma \ref{Expressing Omega Psi} for different $\sigma$ will cancel out due to the principle of inclusion-exclusion.

\begin{lemma}\label{set intersection}
Let $P$ be a poset with $m$ minimal elements.  For all $S \subseteq [m-1]$ and all $\pi \in S_m$,
\[T_P(\pi, f_S(\pi)) = \bigcap_{s \in S}T_P(\pi, f_s(\pi)).\]
\end{lemma}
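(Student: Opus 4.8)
The plan is to exploit the fact that condition (1) in the definition of a $(\pi,\sigma)$-labeling depends only on the first argument $\pi$, which is common to every set in the asserted identity. Hence all of $T_P(\pi, f_S(\pi))$ and the sets $T_P(\pi, f_s(\pi))$ for $s \in S$ consist of bijections satisfying this same condition (1), and the problem reduces to the following: fixing a bijection $\phi$ that satisfies condition (1), show that $\phi$ satisfies condition (2) for $\sigma = f_S(\pi)$ if and only if it satisfies condition (2) for $\sigma = f_s(\pi)$ for \emph{every} $s \in S$. (We may assume $S \neq \emptyset$.) Throughout I write $M(y) = \{i : z_i \preceq y\}$ for the nonempty set of minimal elements below $y$, and for a word $w \in S_m$ I let $\ell_w(y)$ denote the index of the element of $M(y)$ appearing latest in $w$. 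By the remark following the definition of a $(\pi,\sigma)$-labeling, condition (2) for $\sigma$ is exactly the requirement that $\phi(y) \geq \phi(z_{\ell_\sigma(y)})$ for all $y \in P$.

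The first step is to compute $\ell_w(y)$ for the words in question. Since $f_s$ deletes $\pi_s$ and appends it at the very end, $\ell_{f_s(\pi)}(y) = \pi_s$ whenever $\pi_s \in M(y)$, and $\ell_{f_s(\pi)}(y) = \ell_\pi(y)$ otherwise. Similarly $f_S$ deletes the entries in positions $S = \{s_1 < \dots < s_l\}$ and appends them in the order $\pi_{s_l}, \dots, \pi_{s_1}$, so $\ell_{f_S(\pi)}(y) = \pi_{s_{\min}}$, where $s_{\min}$ is the smallest element of $S$ with $\pi_{s_{\min}} \in M(y)$ when such an element exists, and $\ell_{f_S(\pi)}(y) = \ell_\pi(y)$ otherwise. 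The key input from condition (1) is monotonicity: because $\phi(z_{\pi_1}) > \dots > \phi(z_{\pi_m})$, the label $\phi(z_{\pi_j})$ strictly decreases as the position $j$ increases, and in particular $\phi(z_{\ell_\pi(y)})$ is the smallest label among all minimal elements below $y$.

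I would then compare, for each fixed $y$, the constraint imposed by $C_2(f_S(\pi))$ against the conjunction of the constraints imposed by the various $C_2(f_s(\pi))$, in two cases. If $M(y)$ contains none of the values $\{\pi_s : s \in S\}$, then $\ell_{f_s(\pi)}(y) = \ell_{f_S(\pi)}(y) = \ell_\pi(y)$ for every $s \in S$, so all constraints on $y$ coincide. Otherwise, the conjunction over $s \in S$ requires $\phi(y) \geq \phi(z_{\pi_s})$ for every $s \in S$ with $\pi_s \in M(y)$, together with $\phi(y) \geq \phi(z_{\ell_\pi(y)})$ for the remaining $s$; by monotonicity the largest of these lower bounds is $\phi(z_{\pi_{s_{\min}}})$, since among positions $s$ with $\pi_s \in M(y)$ the smallest position gives the largest label, and that label already dominates the global minimum $\phi(z_{\ell_\pi(y)})$. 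This single bound $\phi(y) \geq \phi(z_{\pi_{s_{\min}}})$ is precisely the constraint imposed by $C_2(f_S(\pi))$. Matching the constraints for every $y$ gives the equality of the two sets.

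The main obstacle is entirely bookkeeping: pinning down how ``the index appearing latest in the word'' transforms under the append-to-the-end operations $f_s$ and $f_S$, and then invoking the monotonicity of labels along $\pi$ supplied by condition (1) to collapse the conjunction of per-$s$ lower bounds into the single lower bound coming from the smallest relevant position. Once the description of $\ell_w(y)$ is fixed, the two-case comparison is routine.
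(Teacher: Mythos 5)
Your proof is correct and follows essentially the same route as the paper's: both reduce the identity to comparing, element by element, the condition (2) constraint imposed by $f_S(\pi)$ against the conjunction of those imposed by the $f_s(\pi)$, both track where the ``latest occurrence'' lands under the append-to-the-end operations, and both use the monotonicity $\phi(z_{\pi_1}) > \cdots > \phi(z_{\pi_m})$ to collapse the conjunction to the single bound coming from the smallest position $s \in S$ with $z_{\pi_s} \preceq y$ (your $s_{\min}$ is the paper's $s' = \min(S_y \cap S)$, and your $\ell_\pi(y)$ is the paper's $\pi_a$ with $a = \max S_y$). The only differences are bookkeeping choices (values versus positions, and handling minimal elements uniformly rather than separately), which do not affect the argument.
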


\begin{proof}
We will show that the two sets above are equivalent by showing that the inequalities that $\phi$ must satisfy are the same in both sets.  Clearly, a labeling $\phi$ must satisfy $\phi(z_{\pi_1}) > \phi(z_{\pi_2}) > \cdots > \phi(z_{\pi_m})$ to be in either of the sets.  Since this determines the inequalities that must be satisfied among minimal elements of $P$, we will turn our attention to some non-minimal element $y \in P$.

Define $S_y = \{s \in [m] \mid y \succ z_{\pi_{s}}\}$. Then the condition on $\phi(y)$ for $\phi \in T_P(\pi, \sigma)$ is
$\phi(y) \geq \phi(z_{\pi_s})$, where $\pi_s$ is the value that, over all $s \in S_y$, appears latest in the word of $\sigma$.
Let $a = \max(S_y) = \theta_\pi(y)$ (where $\theta_\pi$ is the $\pi$-partition of $P$), and let $s' = \min(S_y \cap S)$ if $S_y \cap S \neq \varnothing$.
We claim that any labeling $\phi$ in either side of our desired equality must satisfy $\phi(y) \geq \phi(z_{\pi_{s'}})$ if $S_y \cap S \neq \varnothing$, and $\phi(y) \geq \phi(z_{\pi_{a}})$ otherwise.


For the left hand side, $f_S(\pi)$ by definition reverses the order of $\pi_s$ for $s \in S_y \cap S$ and places these elements after any element of $S_y \setminus S$. Thus the last $\pi_s$ for $s \in S_y$ appearing in $f_S(\pi)$ will be $\pi_{s'}$ if $S_y \cap S \neq \varnothing$ and $\pi_a$ otherwise, as desired.

For the right hand side, a labeling $\phi$ in $T_P(\pi, f_s(\pi))$ must satisfy $\phi(y) \geq \phi(z_{\pi_s})$ if $s\in S_y \cap S$ or $\phi(y) \geq \phi(z_{\pi_a})$ otherwise.  Since we have that $\phi(z_{\pi_1}) > \phi(z_{\pi_2}) > \cdots > \phi(z_{\pi_m})$ and $s' = \min(S_y \cap S)$, it follows that the strictest inequality that $\phi$ must satisfy on the right hand side is $\phi(y) \geq \phi(z_{\pi_{s'}})$ if $S_y \cap S$ is nonempty (when $s = s'$), and $\phi(y) \geq \phi(z_{\pi_{a}})$ otherwise, as desired.
\end{proof}

\begin{example}
Consider again the poset shown in Example~\ref{ex:tp}, and let $\pi = 2314$, so that $\phi(z_2) > \phi(z_3) > \phi(z_1) > \phi(z_4)$.
\[\begin{tikzpicture}
 [auto,
 vertex/.style={circle,draw=black!100,fill=black!100,thick,inner sep=0pt,minimum size=1mm}]
\node (v1) at ( 0,0) [vertex, label=left:$z_1$] {};
\node (v2) at ( 1,0) [vertex, label=left:$z_2$] {};
\node (v3) at (2,0) [vertex, label=right:$z_3$] {};
\node (v4) at ( 3,0) [vertex, label=right:$z_4$] {};
\node (v5) at (0,1) [vertex] {};
\node (v6) at (1,1) [vertex, label=left:$x$] {};
\node (v7) at (2, 1) [vertex] {};
\node (v8) at (3, 1) [vertex] {};
\draw [-] (v1) to (v5);
\draw [-] (v1) to (v6);
\draw [-] (v2) to (v6);
\draw [-] (v2) to (v7);
\draw [-] (v3) to (v6);
\draw [-] (v4) to (v7);
\draw [-] (v4) to (v8);
\end{tikzpicture}\]
Since the element $x$ covers $z_1$, $z_2$, and $z_3$, the condition on $\phi(x)$ in $T_P(\pi, \sigma)$ depends on which of $1$, $2$, and $3$ appears latest in $\sigma$. 
For $s = 1, 2, 3$, we then have the following values of $f_s(\pi)$ with the corresponding conditions on $\phi(x)$ for each $\phi$ in $T_P(\pi, f_s(\pi))$.
\begin{alignat*}{2}
f_1(\pi) &= \mathbf{31}4\mathbf{2}\colon\qquad & \phi(x) & \geq  \phi(z_2),\\
f_2(\pi) &= \mathbf{21}4\mathbf{3}\colon\qquad & \phi(x) & \geq  \phi(z_3),\\
f_3(\pi) &= \mathbf{23}4\mathbf{1}\colon\qquad & \phi(x) & \geq  \phi(z_1).
\end{alignat*}
Suppose that $S = \{1, 2\}$. Then in $\bigcap_{s \in S}T_P(\pi, f_s(\pi)) = T_P(\pi, f_1(\pi)) \cap T_P(\pi, f_2(\pi))$, we must have $\phi(x) \geq \max\{\phi(z_2), \phi(z_3)\} = \phi(z_2)$. But this is also the condition imposed on $\phi(x)$ in $T_P(\pi, f_S(\pi))$ since $f_S(\pi) = \textbf{1}4\textbf{32}$.
\end{example}

We will now see that for all $i$, any $(\pi, f_i(\pi))$-labeling is also a $(\pi, \pi)$-labeling.

\begin{lemma}\label{set union}
Let $P$ be a poset with $m$ minimal elements. For all $\pi \in S_m$,
\[\bigcup_{i=1}^{m-1}T_P(\pi, f_i(\pi)) \subseteq T_P(\pi, \pi).\]
\end{lemma}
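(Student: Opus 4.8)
The plan is to prove the containment $\bigcup_{i=1}^{m-1} T_P(\pi, f_i(\pi)) \subseteq T_P(\pi, \pi)$ by fixing an index $i$ and an arbitrary labeling $\phi \in T_P(\pi, f_i(\pi))$, then verifying that $\phi$ satisfies both defining conditions of a $(\pi, \pi)$-labeling. Condition (1) is identical for both sets (it only involves the ordering of the labels on the minimal elements, which is governed by $\pi$ in both cases), so that requirement is immediate and needs no work. The entire content therefore lies in checking condition (2) for the permutation $\pi$, given that it holds for $f_i(\pi)$.

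To analyze condition (2), I would use exactly the notation from the proof of Lemma~\ref{set intersection}. For a non-minimal $y \in P$, set $S_y = \{s \in [m] \mid y \succ z_{\pi_s}\}$ and $a = \max(S_y) = \theta_\pi(y)$. The condition on $\phi(y)$ imposed by membership in $T_P(\pi, \sigma)$ is $\phi(y) \geq \phi(z_{\pi_s})$, where $\pi_s$ is the element that, among all $s \in S_y$, appears \emph{latest} in the word of $\sigma$. For the target set $T_P(\pi, \pi)$ (i.e.\ $\sigma = \pi$), the latest such element is simply $\pi_a$, since $\pi$ lists its entries in index order and $a = \max(S_y)$. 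So the goal reduces to showing that whatever condition $f_i(\pi)$ imposes on $\phi(y)$, it forces $\phi(y) \geq \phi(z_{\pi_a})$.

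The key observation is that $f_i$ moves $\pi_i$ to the very end of the word and leaves the relative order of all other entries unchanged. Thus, for the word $f_i(\pi)$, the latest element of $S_y$ is $\pi_i$ if $i \in S_y$, and is $\pi_a$ otherwise. In the second case the imposed condition is $\phi(y) \geq \phi(z_{\pi_a})$, exactly what we want. In the first case, when $i \in S_y$, the condition is $\phi(y) \geq \phi(z_{\pi_i})$; but since $i \in S_y$ means $i \leq a$, condition (1) gives $\phi(z_{\pi_i}) > \phi(z_{\pi_a})$ (or they coincide when $i = a$), so $\phi(y) \geq \phi(z_{\pi_i}) \geq \phi(z_{\pi_a})$ again follows. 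Either way $\phi$ satisfies the $(\pi,\pi)$-condition at $y$, and since $y$ was arbitrary, $\phi \in T_P(\pi, \pi)$.

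I do not expect a genuine obstacle here; the lemma is a soft monotonicity statement, and the work is entirely bookkeeping about where a single entry lands under $f_i$. The one point requiring a little care is the boundary case $i = a$ (where $\pi_i = \pi_a$ and the two conditions literally agree) versus $i < a$ (where condition (1) supplies the needed strict inequality); handling both uniformly via the chain $\phi(z_{\pi_i}) \geq \phi(z_{\pi_a})$ coming from $i \leq a$ avoids any case friction. It is worth noting that the inclusion is generally strict, since a $(\pi,\pi)$-labeling may impose on $\phi(y)$ only the weak constraint tied to $\pi_a$ while satisfying none of the sharper constraints coming from the $f_i(\pi)$; this is consistent with the inclusion-exclusion setup flagged before Lemma~\ref{set intersection}.
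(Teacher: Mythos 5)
Your proof is correct and follows essentially the same route as the paper's: both verify condition (1) trivially (it depends only on the first argument $\pi$), then handle condition (2) by splitting into the cases $y \succeq z_{\pi_i}$ and $y \nsucceq z_{\pi_i}$, using that $f_i$ sends $\pi_i$ to the end while preserving the relative order of the other entries, and closing the first case with $\phi(z_{\pi_i}) \geq \phi(z_{\pi_a})$ from $i \leq a$. Your write-up just makes the bookkeeping (via $S_y$ and $a = \max(S_y)$) slightly more explicit than the paper does.
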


\begin{proof}
Let $\phi \in \bigcup_{i=1}^{m-1}T_P(\pi, f_i(\pi)).$ This means that $\phi$ is a $(\pi, f_i(\pi))$-labeling for some value of $i$.  No matter the value of $i$, $\phi$ must satisfy
$\phi(z_{\pi_1}) > \phi(z_{\pi_2}) > \dots > \phi(z_{\pi_m})$, which also must be satisfied to be in $T_P(\pi, \pi)$.

It remains to show that for all $y \in P$, $\phi(y) \geq \phi(z_{\pi_a})$, where $a = \theta_\pi(y)$, and $\theta_\pi$ is the $\pi$-partition of $P$.  If $y \nsucceq z_{\pi_i}$, then $\phi(y) \geq \phi(z_{\pi_{a}})$.  If instead $y \succeq z_{\pi_i}$, then $\phi(y) \geq \phi(z_{\pi_i})$, but $\phi(z_{\pi_i}) \geq \phi(z_{\pi_{a}})$ since $i \leq a$ by the definition of $\theta_\pi$.  Therefore $\phi \in T_P(\pi, \pi)$.
%
\end{proof}

As we will show in Lemma \ref{Sum nonnegative}, a $(\pi, \pi)$-labeling $\phi$ appears on the right hand side of Lemma~\ref{set union} but not the left hand side if and only if, for all $i = 1, \dots, m-1$, there exists $y \succ z_{\pi_i}$ with $\phi(y) < \phi(z_{\pi_i})$. We will describe such labelings more thoroughly in the next subsection.

\subsection{Zigzag labelings}

In this subsection, we introduce the notion of a zigzag labeling, which will allow us to give a combinatorial interpretation for $\MaxOne(\KPmin)$.


\begin{defn}
A bijection $\phi\colon P \to [n]$ is a \emph{zigzag labeling} if the following conditions hold:
\begin{enumerate}
\item if $x$ is minimal and $\phi(x) \neq 1$, then there exists $y \succ x$ with $\phi(y) < \phi(x)$; and
\item if $x$ is not minimal, then there exists a minimal element $y \prec x$ with $\phi(y) < \phi(x)$.
\end{enumerate}
We say that a zigzag labeling $\phi$ has \emph{type} $\pi$ if $\phi(z_{\pi_1}) > \phi(z_{\pi_2}) > \dots > \phi(z_{\pi_m})$.
\end{defn}

\begin{example}
The following is an example of a zigzag labeling of $P$ of type $4231$.  (We take $z_1, z_2 , z_3, z_4$ to be the minimal elements drawn from left to right.)
\[
\begin{tikzpicture}
 [auto,
 vertex/.style={circle,draw=black!100,fill=black!100,thick,inner sep=0pt,minimum size=1mm}]
\node (v1) at ( 0,0) [vertex, label=left:$1$] {};
\node (v2) at ( 1,0) [vertex, label=left:$5$] {};
\node (v3) at (2,0) [vertex, label=right:$4$] {};
\node (v4) at ( 3,0) [vertex, label=right:$7$] {};
\node (v5) at (0,1) [vertex, label=left:$2$] {};
\node (v6) at (1,1) [vertex, label=left:$3$] {};
\node (v7) at (2, 1) [vertex, label=right:$6$] {};
\node (v8) at (3, 1) [vertex, label=right:$8$] {};
\draw [-] (v1) to (v5);
\draw [-] (v1) to (v6);
\draw [-] (v2) to (v6);
\draw [-] (v2) to (v7);
\draw [-] (v3) to (v6);
\draw [-] (v4) to (v7);
\draw [-] (v4) to (v8);
\end{tikzpicture}
\]
\end{example}


Zigzag labelings can be used to determine whether or not a poset is connected.

\begin{lemma}\label{Connected have zigzag label}
	A poset $P$ has a zigzag labeling if and only if it is connected.
\end{lemma}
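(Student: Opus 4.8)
The plan is to prove both directions of the biconditional separately, with the forward direction (zigzag labeling implies connected) being the easier one. First I would assume $P$ has a zigzag labeling $\phi$ and suppose for contradiction that $P$ is disconnected, so $P = A \sqcup B$ for nonempty posets $A,B$ with no relations between them. Consider the element $x$ with the smallest label among $A \cup B$ that is not a global minimum of $\phi$; more usefully, look at whichever connected component does not contain the element labeled $1$. Let $C$ be a connected component of $P$ not containing $\phi^{-1}(1)$, and let $x \in C$ be the element with the smallest $\phi$-value in $C$. If $x$ is minimal in $P$, then since $\phi(x) \neq 1$, condition (1) forces some $y \succ x$ with $\phi(y) < \phi(x)$; but $y \in C$, contradicting minimality of $\phi(x)$ in $C$. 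If $x$ is not minimal, condition (2) forces a minimal $y \prec x$ with $\phi(y) < \phi(x)$, again with $y \in C$, the same contradiction. Hence $P$ is connected.

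For the reverse direction, I would assume $P$ is connected and construct a zigzag labeling explicitly, building it up one element at a time. The idea is to assign labels $1, 2, \dots, n$ greedily so that each new element is attached to something already labeled. Concretely, I would pick any minimal element to receive label $1$, and then at each stage, having labeled a set $A \subsetneq P$, choose the next element $x$ (to receive the next available label) to be an element of $P \setminus A$ that is \emph{comparable} to some already-labeled element; connectivity of $P$ guarantees such an $x$ exists at every stage, since otherwise $A$ and $P \setminus A$ would be separated in the comparability graph. I would need to choose $x$ carefully to make both zigzag conditions hold: specifically, I would want each newly added non-minimal element to sit above some already-labeled minimal element, and each newly added minimal element (other than the first) to sit below some already-labeled element.

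The main obstacle will be verifying that such a careful greedy choice can always be made while simultaneously respecting both conditions. The cleanest way to handle this is to process the elements in a way that interleaves ``upward'' and ``downward'' moves through the comparability graph. I would argue that because $P$ is connected, its comparability (Hasse) graph is connected, so I can grow a connected subset $A$ one vertex at a time where each added vertex $x$ is adjacent in the Hasse diagram to some $w \in A$. If $x \succ w$ (a cover going up), then $x$ is non-minimal and lies above $w$; tracing down from $w$ to a minimal element already in $A$ with a smaller label verifies condition (2). If $x \prec w$ (a cover going down) and $x$ is minimal, then $w \succ x$ is already labeled with a smaller value, verifying condition (1); if $x$ is non-minimal, I would instead route through a minimal element below it, which requires ensuring such a minimal element is already labeled.

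To make this rigorous without excessive bookkeeping, I expect the author's argument will exploit a specific traversal order rather than an arbitrary greedy one. An efficient formulation: induct on connected posets, or equivalently, order the covers so that one performs a depth-first-style walk on the Hasse diagram starting from a chosen minimal element, always assigning the smallest unused label when first visiting a vertex. The essential point to check is that in such a traversal, whenever a non-minimal element receives its label, at least one minimal element beneath it has already been visited (condition (2)), and whenever a minimal element other than the starting one receives its label, it is reached by descending from an already-visited larger element, giving the required $y \succ x$ with $\phi(y) < \phi(x)$ (condition (1)). The hard part is thus not any single inequality but confirming that a single traversal order can be chosen to satisfy both conditions at once for every element; connectivity is exactly what makes this possible, which is why the statement fails for disconnected posets (a component avoiding label $1$ cannot satisfy condition (1) at its $\phi$-minimal minimal element).
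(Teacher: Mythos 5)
Your first direction (a zigzag labeling forces connectivity) is correct and is essentially the paper's own argument: in a disconnected poset, the element with the smallest label in a component not containing the element labeled $1$ can satisfy neither condition (1) nor condition (2), since every element comparable to it lies in its own component and carries a larger label.

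The converse is where the gap is, and the traversal you ultimately settle on (``a depth-first-style walk on the Hasse diagram starting from a chosen minimal element, always assigning the smallest unused label when first visiting a vertex'') is not merely left unverified --- it is false. Consider the connected poset with covers $d_1 \lessdot c_1 \lessdot b$ and $d_2 \lessdot c_2 \lessdot b$, whose Hasse diagram is the path $d_1 - c_1 - b - c_2 - d_2$. Label $1$ must go to a minimal element (otherwise condition (2) already fails at that element), say $d_1$. Then any labeling in which each newly labeled vertex is Hasse-adjacent to a previously labeled vertex is forced to proceed $d_1, c_1, b, c_2, d_2$, and condition (2) fails at $c_2$: the only minimal element below $c_2$ is $d_2$, which receives a larger label. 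Starting at $d_2$ fails symmetrically at $c_1$. So for this poset \emph{no} traversal order within your framework produces a zigzag labeling; the difficulty you flag at the end (``confirming that a single traversal order can be chosen'') is not bookkeeping but a structural obstruction to the approach. The paper's construction avoids vertex-by-vertex adjacency-constrained growth entirely: using connectivity, it orders the minimal elements $z_{\pi'_1}, \dots, z_{\pi'_m}$ so that each filter $V(z_{\pi'_i})$ meets $V(z_{\pi'_1}, \dots, z_{\pi'_{i-1}})$, and then labels in blocks --- at stage $i$ it labels $z_{\pi'_i}$ first and then all remaining unlabeled elements of $V(z_{\pi'_1}, \dots, z_{\pi'_i})$. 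Every non-minimal element labeled at stage $i$ lies above $z_{\pi'_i}$, which gives condition (2) automatically, and any element of the nonempty intersection with the earlier filters witnesses condition (1) for $z_{\pi'_i}$. In the example above this labels $d_2$ fourth, \emph{before} $c_2$, even though $d_2$ is not Hasse-adjacent (nor comparable-adjacent at that moment in your sense of covers) to anything yet labeled --- exactly the move your connected-growth constraint forbids.
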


\begin{proof}
Suppose $P$ is connected and has $m$ minimal elements.  Since $P$ is connected, there exists a permutation $\pi' \in S_m$ such that $\V(z_{\pi'_1}, \dots, z_{\pi'_{i-1}}) \cap \V(z_{\pi'_i}) \neq \varnothing$ for all $i$.
Indeed, if it were not possible to find $\pi'_i$ given $\pi'_1, \dots, \pi'_{i-1}$, then $\V(z_{\pi'_1}, \dots, z_{\pi'_{i-1}})$ would be a connected component of $P$, which contradicts the fact that $P$ is connected.

An example of a zigzag labeling $\phi$ of $P$ is as follows:  label $z_{\pi'_1}$ with $1$, and label the rest of $\V(z_{\pi'_1})$ with $2, 3, \dots$.  Then label $z_{\pi'_2}$ with the lowest number available, and label the elements of $\V(z_{\pi'_1}, z_{\pi'_2}) \setminus \V(z_{\pi'_1})$ with the next lowest remaining labels.  Continue this process until all elements of $P$ are labeled. For all $i > 1$, there exists an element $x \in \V(z_{\pi'_{1}}, \dots, z_{\pi'_{i-1}}) \cap \V(z_{\pi'_i})$, and this element satisfies $x \succ z_{\pi'_i}$ and $\phi(x) < \phi(z_{\pi'_i})$.  Therefore $\phi$ is a zigzag labeling. (One can check that $\phi$ has type $\pi=(\pi')^{\textrm{rev}}$.)



Now suppose $P$ is disconnected, and let $\phi\colon P \to [n]$ be any bijection. Let $x$ be the element with the smallest label that is not in the same connected component as the element labeled $1$. Then $x$ is not comparable to any element $y$ with a smaller label, so $\phi$ cannot be a zigzag labeling.
%
%
\end{proof}

We will now use the lemmas in the previous subsection to show that we can count zigzag labelings of a fixed type by an alternating sum.

\begin{lemma}\label{Sum nonnegative}
Let $P$ be a finite poset with $m$ minimal elements, and let $\pi \in S_m$. The set of zigzag labelings of $P$ of type $\pi$ is
\[T_P(\pi, \pi) \setminus \bigcup_{i=1}^{m-1}T_P(\pi, f_i(\pi)),\]
and the number of such labelings is
\[\sum_{S \subseteq [m-1]} (-1)^{|S|}|T_P(\pi, f_S(\pi))|.\]
\end{lemma}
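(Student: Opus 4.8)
The plan is to prove the two assertions separately: first the set-theoretic identity characterizing the zigzag labelings of type $\pi$, and then the enumerative formula, which will follow from that identity by inclusion-exclusion together with Lemmas~\ref{set intersection} and~\ref{set union}.

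For the set identity, the main work is to unwind the membership conditions. I would first record a concrete description of $T_P(\pi,f_i(\pi))$ for $1 \le i \le m-1$. Since $f_i(\pi)$ is obtained from $\pi$ by moving the letter $\pi_i$ to the end while preserving the relative order of all other letters, for any $y \succeq z_{\pi_i}$ the minimal element below $y$ appearing latest in $f_i(\pi)$ is $z_{\pi_i}$, whereas for $y \not\succeq z_{\pi_i}$ the binding minimal element is unchanged from $\pi$. Because $T_P(\pi,f_i(\pi)) \subseteq T_P(\pi,\pi)$ by Lemma~\ref{set union}, this shows that for $\phi \in T_P(\pi,\pi)$, the membership $\phi \in T_P(\pi,f_i(\pi))$ is equivalent to requiring $\phi(y) \ge \phi(z_{\pi_i})$ for every $y \succeq z_{\pi_i}$; since $\phi$ is injective this is the same as $\phi(y) > \phi(z_{\pi_i})$ for every $y \succ z_{\pi_i}$. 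Consequently $\phi \in T_P(\pi,\pi)\setminus \bigcup_{i=1}^{m-1}T_P(\pi,f_i(\pi))$ if and only if $\phi \in T_P(\pi,\pi)$ and, for each $i \le m-1$, some $y \succ z_{\pi_i}$ satisfies $\phi(y) < \phi(z_{\pi_i})$.

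It then remains to match this description with the definition of a zigzag labeling of type $\pi$, and two observations would do it. First, condition (2) in the definition of $T_P(\pi,\pi)$ says exactly that every non-minimal $y$ has label at least that of the minimal element below it of smallest label (this is $z_{\pi_a}$ with $a = \theta_\pi(y)$, since labels decrease along $\pi$); by injectivity this is precisely zigzag condition (2), while for minimal $y$ the condition is vacuous. Second, I would observe that any $\phi \in T_P(\pi,\pi)$ forces $\phi(z_{\pi_m}) = 1$: the element labeled $1$ cannot be non-minimal, since condition (2) would then place a minimal element of strictly smaller label below it, so it is a minimal element of smallest label, namely $z_{\pi_m}$. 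This makes the hypothesis ``$\phi(x)\ne 1$'' in zigzag condition (1) vacuous for $z_{\pi_m}$ and automatic for $z_{\pi_1},\dots,z_{\pi_{m-1}}$, so the per-$i$ condition above is exactly zigzag condition (1). Combining the two, the displayed set difference is precisely the set of zigzag labelings of type $\pi$.

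For the count, I would apply inclusion-exclusion to $A = T_P(\pi,\pi)$ and the subsets $B_i = T_P(\pi,f_i(\pi)) \subseteq A$, giving $|A \setminus \bigcup_i B_i| = \sum_{S \subseteq [m-1]} (-1)^{|S|}\,|\bigcap_{s\in S}B_s|$, where the $S=\varnothing$ term equals $|A|$. By Lemma~\ref{set intersection}, $\bigcap_{s\in S} T_P(\pi,f_s(\pi)) = T_P(\pi,f_S(\pi))$, and since $f_\varnothing(\pi)=\pi$ the empty term is $|T_P(\pi,\pi)|$, so the sum collapses to $\sum_{S \subseteq [m-1]}(-1)^{|S|}|T_P(\pi,f_S(\pi))|$, as claimed. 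I expect the main obstacle to be the first identity rather than the bookkeeping: one has to translate the effect of the permutation surgery $f_i$ on the defining inequalities of $T_P$ correctly, and in particular handle the boundary element $z_{\pi_m}$ so that the ``$\phi(x)\ne 1$'' clause of zigzag condition (1) lines up. Once the forced value $\phi(z_{\pi_m})=1$ is isolated, the inclusion-exclusion step is immediate from the two preceding lemmas.
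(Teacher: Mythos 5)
Your proof is correct and follows essentially the same route as the paper's: translate membership in $T_P(\pi,\pi)$ and in each $T_P(\pi,f_i(\pi))$ into the two zigzag conditions, then apply inclusion-exclusion via Lemmas~\ref{set intersection} and~\ref{set union}. Your explicit observation that any $\phi \in T_P(\pi,\pi)$ forces $\phi(z_{\pi_m})=1$ is a detail the paper leaves implicit, and it cleanly disposes of the ``$\phi(x)\neq 1$'' clause in zigzag condition (1).
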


\begin{proof}
	First note that for a labeling $\phi$ of type $\pi$, condition (2) for being a $(\pi, \pi)$-labeling is equivalent to condition (2) for being a zigzag labeling. Indeed, for any non-minimal $x \in P$, if there exists a minimal element $z_{\pi_i} \prec x$ with $\phi(z_{\pi_i}) < \phi(x)$, then certainly we can take $i$ to be the maximum value such that $z_{\pi_i} \prec x$ since that will only decrease the value of $\phi(z_{\pi_i})$.
	
	A labeling $\phi \in T_P(\pi, \pi)$ does not lie in $T_P(\pi, f_i(\pi))$ if there exists some element $y \succ z_{\pi_i}$ with $\phi(y) < \phi(z_{\pi_i})$. Hence this occurs for all $i < m$ when $\phi$ satisfies condition (1) of being a zigzag labeling, which completes the proof of the first claim.
	
	From the principle of inclusion-exclusion and Lemma \ref{set intersection},
\[ \sum_{S \subseteq [m-1]} (-1)^{|S|}|T_P(\pi, f_S(\pi))| = |T_P(\pi, \pi)| - \left|\bigcup_{i=1}^{m-1}T_P(\pi, f_i(\pi))\right|.\]
By Lemma \ref{set union}, we have that 
\[|T_P(\pi, \pi)| - \left|\bigcup_{i=1}^{m-1}T_P(\pi, f_i(\pi))\right| = \left|T_P(\pi, \pi) \setminus \bigcup_{i=1}^{m-1}T_P(\pi, f_i(\pi))\right|.\]
This is exactly the number of zigzag labelings of type $\pi$.
\end{proof}


%

We now have the required tools to prove that the number of zigzag labelings of $P$ can be determined from $\KP$.

\begin{thm}\label{Total zigzags}
The number of zigzag labelings of $P$ is counted by $(n-1)!\MaxOne(\KPmin)$.
\end{thm}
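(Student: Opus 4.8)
The plan is to chain together the three preceding lemmas by a single change of summation variable. Starting from Lemma~\ref{KPmin Expansion}, which gives $\KPmin = \sum_{\sigma \in S_m} \psi_{\alpha(\sigma)}$, I would first apply linearity of $\MaxOne$ and multiply through by $(n-1)!$ to obtain
\[
(n-1)!\MaxOne(\KPmin) = \sum_{\sigma \in S_m} (n-1)!\MaxOne(\psi_{\alpha(\sigma)}).
\]
Then Lemma~\ref{Expressing Omega Psi} rewrites each summand as an alternating sum over subsets, yielding the double sum
\[
(n-1)!\MaxOne(\KPmin) = \sum_{\sigma \in S_m}\sum_{S \subseteq [m-1]} (-1)^{|S|}\,|T_P(g_S(\sigma), \sigma)|.
\]

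The key step is to reindex this double sum so that Lemma~\ref{Sum nonnegative} becomes directly applicable. I would set $\pi = g_S(\sigma)$; since $f_S$ and $g_S$ are mutually inverse bijections of $S_m$ (as recorded in the definitions preceding Lemma~\ref{Expressing Omega Psi}), this gives $\sigma = f_S(\pi)$. For each fixed $S$, the map $\sigma \mapsto g_S(\sigma)$ is a bijection of $S_m$, so as $(\sigma, S)$ ranges over $S_m \times 2^{[m-1]}$, the pair $(\pi, S)$ ranges over exactly the same set. Substituting transforms the double sum into
\[
(n-1)!\MaxOne(\KPmin) = \sum_{\pi \in S_m}\sum_{S \subseteq [m-1]} (-1)^{|S|}\,|T_P(\pi, f_S(\pi))|.
\]

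At this point the inner sum is precisely the alternating sum appearing in Lemma~\ref{Sum nonnegative}, which identifies it as the number of zigzag labelings of $P$ of type $\pi$. Finally I would sum over all $\pi \in S_m$: since every zigzag labeling $\phi$ determines a unique type $\pi$ via $\phi(z_{\pi_1}) > \cdots > \phi(z_{\pi_m})$, these type classes partition the set of all zigzag labelings, and the total count is therefore $(n-1)!\MaxOne(\KPmin)$, as claimed. The main obstacle is the reindexing step: one must be careful that the substitution $\pi = g_S(\sigma)$ is a genuine bijective change of variables compatible with the sign $(-1)^{|S|}$ and the set $T_P(\pi, f_S(\pi))$, so that the resulting double sum matches the hypotheses of Lemma~\ref{Sum nonnegative} term by term; everything else is bookkeeping once this is verified.
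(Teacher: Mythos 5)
Your proposal is correct and follows essentially the same route as the paper's proof: expand $\KPmin$ via Lemma~\ref{KPmin Expansion}, apply Lemma~\ref{Expressing Omega Psi} to each term, reindex the double sum by $\pi = g_S(\sigma)$ (i.e., $\sigma = f_S(\pi)$, using that $f_S$ and $g_S$ are inverse bijections), and conclude with Lemma~\ref{Sum nonnegative} summed over all types $\pi$. The only difference is that you spell out the bijectivity of the change of variables slightly more explicitly than the paper does.
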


\begin{proof}
By Lemmas \ref{KPmin Expansion} and \ref{Expressing Omega Psi}, 
\begin{align*}
(n-1)!\MaxOne(\KPmin) &= (n-1)!\sum_{\sigma \in S_m}\MaxOne(\psi_{\alpha(\sigma)})\\
&= \sum_{\sigma \in S_m} \sum_{S \subseteq [m-1]} (-1)^{|S|}|T_P(g_S(\sigma), \sigma)|
\end{align*}
For all $\pi \in S_m$, we can group together the terms for which $g_S(\sigma) = \pi$, meaning $\sigma = f_S(\pi)$ since $g_S$ and $f_S$ are inverses.  It follows that 
\[ \sum_{\sigma \in S_m} \sum_{S \subseteq [m-1]} (-1)^{|S|}|T_P(g_S(\sigma), \sigma)| =  \sum_{\pi \in S_m} \sum_{S \subseteq [m-1]} (-1)^{|S|}|T_P(\pi, f_S(\pi))|. \]
By Lemma \ref{Sum nonnegative}, this counts the number of zigzag labelings of $P$ (of any type).
\end{proof}

\begin{example}
Let $P$ be the following naturally labeled poset. 
\[
\begin{tikzpicture}
 [auto,
 vertex/.style={circle,draw=black!100,fill=black!100,thick,inner sep=0pt,minimum size=1mm}, scale = .7]
\node (v1) at ( 0,0) [vertex] {};
\node (v2) at ( 1,1) [vertex] {};
\node (v3) at (2,0) [vertex] {};
\node (v4) at ( 3,1) [vertex] {};
\node (v5) at ( 4,0) [vertex] {};
\draw [-] (v1) to (v2);
\draw [-] (v2) to (v3);
\draw [-] (v3) to (v4);
\draw [-] (v4) to (v5);
\end{tikzpicture}
\]
One can compute
\[\KP = 4\psi_{122} + 2\psi_{113}  + 4\psi_{1211}  + 8\psi_{1121} + 4\psi_{1112} + 16\psi_{11111}.\]
It follows that 
\[\KPmin = 4\psi_{122} + 2\psi_{113}.\] 
Applying $\MaxOne$ and multiplying by $4!$, we have \[4!\cdot\MaxOne(\KPmin) = 24(-\tfrac{1}{6} + \tfrac{1}{2}) =8.\]
The following are the $8$ zigzag labelings of $P$:
\[
\begin{tikzpicture}
 [auto,
 vertex/.style={circle,draw=black!100,fill=black!100,thick,inner sep=0pt,minimum size=1mm}, scale = .5]
\node (v1) at ( 0,0) [vertex,label=below:$1$] {};
\node (v2) at ( 1,1) [vertex,label=above:$2$] {};
\node (v3) at (2,0) [vertex,label=below:$3$] {};
\node (v4) at ( 3,1) [vertex,label=above:$4$] {};
\node (v5) at ( 4,0) [vertex,label=below:$5$] {};
\draw [-] (v1) to (v2);
\draw [-] (v2) to (v3);
\draw [-] (v3) to (v4);
\draw [-] (v4) to (v5);
\end{tikzpicture}
\hspace{.5 cm}
\begin{tikzpicture}
 [auto,
 vertex/.style={circle,draw=black!100,fill=black!100,thick,inner sep=0pt,minimum size=1mm}, scale = .5]
\node (v1) at ( 0,0) [vertex,label=below:$5$] {};
\node (v2) at ( 1,1) [vertex,label=above:$4$] {};
\node (v3) at (2,0) [vertex,label=below:$3$] {};
\node (v4) at ( 3,1) [vertex,label=above:$2$] {};
\node (v5) at ( 4,0) [vertex,label=below:$1$] {};
\draw [-] (v1) to (v2);
\draw [-] (v2) to (v3);
\draw [-] (v3) to (v4);
\draw [-] (v4) to (v5);
\end{tikzpicture}
\hspace{.5 cm}
\begin{tikzpicture}
 [auto,
 vertex/.style={circle,draw=black!100,fill=black!100,thick,inner sep=0pt,minimum size=1mm}, scale = .5]
\node (v1) at ( 0,0) [vertex,label=below:$4$] {};
\node (v2) at ( 1,1) [vertex,label=above:$2$] {};
\node (v3) at (2,0) [vertex,label=below:$1$] {};
\node (v4) at ( 3,1) [vertex,label=above:$3$] {};
\node (v5) at ( 4,0) [vertex,label=below:$5$] {};
\draw [-] (v1) to (v2);
\draw [-] (v2) to (v3);
\draw [-] (v3) to (v4);
\draw [-] (v4) to (v5);
\end{tikzpicture}
\hspace{.5 cm}
\begin{tikzpicture}
 [auto,
 vertex/.style={circle,draw=black!100,fill=black!100,thick,inner sep=0pt,minimum size=1mm}, scale = .5]
\node (v1) at ( 0,0) [vertex,label=below:$4$] {};
\node (v2) at ( 1,1) [vertex,label=above:$3$] {};
\node (v3) at (2,0) [vertex,label=below:$1$] {};
\node (v4) at ( 3,1) [vertex,label=above:$2$] {};
\node (v5) at ( 4,0) [vertex,label=below:$5$] {};
\draw [-] (v1) to (v2);
\draw [-] (v2) to (v3);
\draw [-] (v3) to (v4);
\draw [-] (v4) to (v5);
\end{tikzpicture}
\]
\[
\begin{tikzpicture}
 [auto,
 vertex/.style={circle,draw=black!100,fill=black!100,thick,inner sep=0pt,minimum size=1mm}, scale = .5]
\node (v1) at ( 0,0) [vertex,label=below:$5$] {};
\node (v2) at ( 1,1) [vertex,label=above:$3$] {};
\node (v3) at (2,0) [vertex,label=below:$1$] {};
\node (v4) at ( 3,1) [vertex,label=above:$2$] {};
\node (v5) at ( 4,0) [vertex,label=below:$4$] {};
\draw [-] (v1) to (v2);
\draw [-] (v2) to (v3);
\draw [-] (v3) to (v4);
\draw [-] (v4) to (v5);
\end{tikzpicture}
\hspace{.5 cm}
\begin{tikzpicture}
 [auto,
 vertex/.style={circle,draw=black!100,fill=black!100,thick,inner sep=0pt,minimum size=1mm}, scale = .5]
\node (v1) at ( 0,0) [vertex,label=below:$5$] {};
\node (v2) at ( 1,1) [vertex,label=above:$2$] {};
\node (v3) at (2,0) [vertex,label=below:$1$] {};
\node (v4) at ( 3,1) [vertex,label=above:$3$] {};
\node (v5) at ( 4,0) [vertex,label=below:$4$] {};
\draw [-] (v1) to (v2);
\draw [-] (v2) to (v3);
\draw [-] (v3) to (v4);
\draw [-] (v4) to (v5);
\end{tikzpicture}
\hspace{.5 cm}
\begin{tikzpicture}
 [auto,
 vertex/.style={circle,draw=black!100,fill=black!100,thick,inner sep=0pt,minimum size=1mm}, scale = .5]
\node (v1) at ( 0,0) [vertex,label=below:$3$] {};
\node (v2) at ( 1,1) [vertex,label=above:$2$] {};
\node (v3) at (2,0) [vertex,label=below:$1$] {};
\node (v4) at ( 3,1) [vertex,label=above:$4$] {};
\node (v5) at ( 4,0) [vertex,label=below:$5$] {};
\draw [-] (v1) to (v2);
\draw [-] (v2) to (v3);
\draw [-] (v3) to (v4);
\draw [-] (v4) to (v5);
\end{tikzpicture}
\hspace{.5 cm}
\begin{tikzpicture}
 [auto,
 vertex/.style={circle,draw=black!100,fill=black!100,thick,inner sep=0pt,minimum size=1mm}, scale = .5]
\node (v1) at ( 0,0) [vertex,label=below:$5$] {};
\node (v2) at ( 1,1) [vertex,label=above:$4$] {};
\node (v3) at (2,0) [vertex,label=below:$1$] {};
\node (v4) at ( 3,1) [vertex,label=above:$2$] {};
\node (v5) at ( 4,0) [vertex,label=below:$3$] {};
\draw [-] (v1) to (v2);
\draw [-] (v2) to (v3);
\draw [-] (v3) to (v4);
\draw [-] (v4) to (v5);
\end{tikzpicture}
\]
\end{example}


Now that we have this combinatorial interpretation, it is easy to show our main result about the irreduciblity of $\KP$.

\begin{thm}\label{Conn and Irr}
A poset $P$ is connected if and only if $\KP$ is irreducible over $\QSym$.
\end{thm}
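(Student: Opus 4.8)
The plan is to prove the two directions separately, assembling the results established in this section. The direction ``$\KP$ irreducible $\Rightarrow P$ connected'' I would obtain by contraposition from Remark~\ref{Disconnect posets}: if $P$ is disconnected with connected components $P_1, \dots, P_r$ (so $r \geq 2$), then $\KP = \prod_{i=1}^r K_{P_i}(\x)$. Each factor is homogeneous of degree $|P_i| \geq 1$ and hence nonconstant, so this exhibits a nontrivial factorization and $\KP$ is reducible. Thus irreducibility of $\KP$ forces $P$ to be connected.

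For the main direction, ``$P$ connected $\Rightarrow \KP$ irreducible,'' I would run the following chain. Since $P$ is connected, Lemma~\ref{Connected have zigzag label} produces at least one zigzag labeling of $P$, so the total number of zigzag labelings is strictly positive. By Theorem~\ref{Total zigzags}, that number equals $(n-1)!\,\MaxOne(\KPmin)$; dividing by the positive integer $(n-1)!$ gives $\MaxOne(\KPmin) > 0$, and in particular $\MaxOne(\KPmin) \neq 0$. Since $P$ is nonempty, $\KPmin$ is a nonconstant homogeneous element of $\QSym$ (it lies in $\QSym_{n,m}$ with $n \geq 1$), so the remark following Lemma~\ref{Max One product zero} applies: a nonconstant homogeneous $f$ with $\MaxOne(f) \neq 0$ is irreducible. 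Hence $\KPmin$ is irreducible.

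To finish, I would transport irreducibility from the minimal-length part back to $\KP$ via the contrapositive of Lemma~\ref{Reducible KP}: if $\KP$ were reducible then $\KPmin$ would be reducible too, contradicting the previous paragraph. Therefore $\KP$ is irreducible, completing both directions.

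Because the genuine work has already been done in the preceding lemmas---most notably the combinatorial identification in Theorem~\ref{Total zigzags} of $(n-1)!\,\MaxOne(\KPmin)$ with the zigzag count, and the characterization of connectedness by the existence of a zigzag labeling in Lemma~\ref{Connected have zigzag label}---there is no substantial obstacle remaining at this stage. The only points demanding care are bookkeeping ones: confirming that every component in the disconnected case contributes a genuinely nonconstant factor, and observing that Theorem~\ref{Total zigzags} delivers strict positivity (not merely a vanishing-or-not dichotomy), which is what rules out an accidental cancellation and guarantees $\MaxOne(\KPmin) \neq 0$.
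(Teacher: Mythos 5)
Your proposal is correct and follows the same route as the paper's proof: the disconnected case handled by Remark~\ref{Disconnect posets}, and the connected case by combining Lemma~\ref{Connected have zigzag label} and Theorem~\ref{Total zigzags} to get $\MaxOne(\KPmin) > 0$, then applying Lemma~\ref{Max One product zero} and Lemma~\ref{Reducible KP}. The bookkeeping points you flag (nonconstancy of the factors, strict positivity of the zigzag count) are exactly the ones implicit in the paper's argument.
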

\begin{proof}
	The ``if" direction is true by Remark \ref{Disconnect posets}.  The ``only if" direction is an immediate consequence of Theorem \ref{Total zigzags} and Lemma \ref{Connected have zigzag label}. If $P$ is connected, then $(n-1)! \MaxOne(\KPmin) > 0$, so it follows from Lemma~\ref{Max One product zero} that $\KPmin$ is irreducible. Therefore by Lemma \ref{Reducible KP}, $\KP$ is irreducible.
\end{proof}

In fact, this method implies that if $P$ is connected, then $\KP$ does not even lie in the span of the homogeneous, reducible elements of $\QSym$.

This result also tells us how $\KP$ factors as a product of irreducible partition generating functions. It is well known that $\QSym$ is isomorphic to a polynomial ring and hence is a unique factorization domain (see \cite{Hazewinkel, LamPylyavskyy}).

\begin{cor}\label{Factoring KP}
	Let $P$ be a naturally labeled poset. Then the irreducible factorization of $\KP$ is given by $\KP = \prod_i K_{P_i}(\x)$, where $P_i$ are the connected components of $P$.
\end{cor}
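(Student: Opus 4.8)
The plan is to assemble three ingredients that are already in place. First, Remark~\ref{Disconnect posets} gives the multiplicative decomposition $\KP = \prod_i K_{P_i}(\x)$, where the $P_i$ are the connected components of $P$; this is the candidate factorization. Second, each $P_i$ is by definition connected, so Theorem~\ref{Conn and Irr} immediately tells us that every factor $K_{P_i}(\x)$ is irreducible in $\QSym$. Third, since $\QSym$ is a unique factorization domain (by \cite{Hazewinkel, LamPylyavskyy}, as noted above), any factorization of $\KP$ into irreducibles is unique up to reordering and multiplication by units. Combining these, the displayed product is the irreducible factorization.

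The one point requiring a brief check is that the factors $K_{P_i}(\x)$ are genuine (non-unit) irreducibles, so that the factorization is nontrivial and the uniqueness statement applies as intended. The units of $\QSym$ are precisely the nonzero scalars sitting in degree $0$, whereas each $K_{P_i}(\x)$ is homogeneous of degree $|P_i| \geq 1$. Hence no factor is a unit, and the product genuinely exhibits $\KP$ as a product of irreducibles.

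There is effectively no obstacle here: the substantive content has already been absorbed into Theorem~\ref{Conn and Irr}, whose proof (via zigzag labelings and the positivity of $(n-1)!\,\MaxOne(\KPmin)$) is where all the work lies. The corollary merely repackages that irreducibility criterion together with the multiplicativity of $K_P$ over connected components and the UFD structure of $\QSym$. Accordingly, I would keep the proof to a few lines: invoke Remark~\ref{Disconnect posets} for the product, invoke Theorem~\ref{Conn and Irr} for irreducibility of each factor, note that none of the factors is a unit, and appeal to uniqueness of factorization in $\QSym$ to conclude that this is \emph{the} irreducible factorization of $\KP$.
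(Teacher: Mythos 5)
Your proof is correct and follows essentially the same route as the paper: invoke Remark~\ref{Disconnect posets} for the product decomposition and Theorem~\ref{Conn and Irr} for irreducibility of each factor, with the UFD property of $\QSym$ (stated just before the corollary) justifying the word ``the.'' Your added check that no factor is a unit is a small, harmless elaboration of what the paper leaves implicit.
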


\begin{proof}
This follows from the fact that $\KP$ factors into a product of the partition generation functions of its connected components, and each of these is irreducible by Theorem \ref{Conn and Irr}. 
\end{proof}


This result also gives a condition on when two posets can have the same partition generating function based on their connected components.

\begin{cor}
	Let $P$ and $Q$ be naturally labeled posets. Let $P_1, \dots, P_k$ be the connected components of $P$, and $Q_1, \dots, Q_l$ the connected components of $Q$.
If $\KP = \KQ$, then $k = l$, and there exists a permutation $\pi \in S_k$ such that $K_{P_i}(\mathbf{x}) = K_{Q_{\pi_{i}}}(\mathbf{x})$ for all $i$.
\end{cor}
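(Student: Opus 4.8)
The plan is to reduce the statement entirely to the uniqueness of factorization in $\QSym$, using the explicit irreducible factorization of $\KP$ recorded in Corollary~\ref{Factoring KP}. First I would note that, by Corollary~\ref{Factoring KP}, the equalities $\KP = \prod_{i=1}^k K_{P_i}(\x)$ and $\KQ = \prod_{j=1}^l K_{Q_j}(\x)$ are the irreducible factorizations of $\KP$ and $\KQ$: each factor is irreducible by Theorem~\ref{Conn and Irr}, and each is a nonunit since it is homogeneous of positive degree $|P_i|\geq 1$ (the units of $\QSym$ being exactly the nonzero constants). In particular the number of irreducible factors is well defined.

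Since $\QSym$ is a unique factorization domain, the hypothesis $\KP = \KQ$ forces these two factorizations to coincide up to reordering and multiplication by units. Hence $k = l$, and there exist a permutation $\pi \in S_k$ and nonzero scalars $u_i \in \C^\times$ such that $K_{P_i}(\x) = u_i\, K_{Q_{\pi_i}}(\x)$ for every $i$. Comparing degrees on both sides gives $|P_i| = |Q_{\pi_i}| =: n_i$.

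It then remains only to show that each $u_i = 1$, and this is the one point that requires care. For this I would pin down a normalization shared by all partition generating functions of naturally labeled posets: the coefficient of the monomial quasisymmetric function $M_{(n_i)}$ in $K_{P_i}(\x)$ equals $1$. Indeed, in a naturally labeled poset there are no strict edges, so the constant map $\theta \equiv j$ is a valid $P$-partition for each $j$; these are precisely the $P$-partitions contributing to $M_{(n_i)} = \sum_j x_j^{n_i}$, and there is exactly one for each value. The same computation applies to $K_{Q_{\pi_i}}(\x)$, so comparing the $M_{(n_i)}$-coefficients in $K_{P_i}(\x) = u_i\, K_{Q_{\pi_i}}(\x)$ yields $1 = u_i \cdot 1$, whence $u_i = 1$ and $K_{P_i}(\x) = K_{Q_{\pi_i}}(\x)$ for all $i$. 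The main (and essentially only) obstacle is this scalar-matching step; everything else is a direct invocation of unique factorization together with Corollary~\ref{Factoring KP}.
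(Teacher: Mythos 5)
Your proof is correct and follows essentially the same route as the paper: invoke Corollary~\ref{Factoring KP} together with unique factorization in $\QSym$, then rule out scalar factors by comparing a normalized coefficient. The only cosmetic difference is that you normalize via the coefficient of $M_{(n_i)}$ (counting constant $P$-partitions), whereas the paper normalizes via the coefficient of $L_{(n_i)}$ obtained from Theorem~\ref{L Expansion}; both work equally well.
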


\begin{proof}
This follows immediately from Corollary \ref{Factoring KP} and the fact that QSym is a unique factorization domain. (There are no scalar factors since Theorem~\ref{L Expansion} implies that the coefficient of $L_n$ in the expansion of any $K_{P_i}(\x)$ or $K_{Q_j}(\x)$ in the $L_\alpha$-basis is $1$.)
\end{proof}

It is still open whether a connected poset $(P, \omega)$ that is not naturally labeled need have an irreducible generating function $\KPomega$. Our approach will not work in this more general setting: for a general labeled poset $\Pomega$, it is possible for $\KPomega$ to be irreducible even when $\widetilde{K}_{(P, \omega)}(\mathbf{x})$ is reducible. (Moreover, $\KPomega$ may be irreducible yet lie in the span of homogeneous, reducible elements of $\QSym$.)

\begin{example}
Let $(P, \omega)$ be the the following labeled poset with the generating function shown.  (This can either be computed from the fundamental basis expansion using Theorem~\ref{L Expansion} and the results of \cite{BallantineEtc}, or from Theorem \ref{Combinatorial Description of Psi} below.)
\[
\begin{tikzpicture}
 [auto,
 vertex/.style={circle,draw=black!100,fill=black!100,thick,inner sep=0pt,minimum size=1mm}]
\node (v1) [draw,circle,minimum size=.45cm,inner sep=0pt] at (0,1.5) {$1$};
\node (v2) [draw,circle,minimum size=.45cm,inner sep=0pt] at (1,1) {$2$};
\node (v3) [draw,circle,minimum size=.45cm,inner sep=0pt] at (.5,0) {$3$};
\node (v4) [draw,circle,minimum size=.45cm,inner sep=0pt] at (1,2) {$4$};
\node (v5) [draw,circle,minimum size=.45cm,inner sep=0pt] at (.5,3) {$5$};
\draw [very thick] [-] (v3) to (v1);
\draw [very thick] [-] (v3) to (v2);
\draw [-] (v2) to (v4);
\draw [-] (v1) to (v5);
\draw [-] (v4) to (v5);
\end{tikzpicture}
\]
\begin{align*}
K_{(P, \omega)}(\mathbf{x}) = &-\psi_{32} -\psi_{23} - \psi_{311} - \psi_{221} - 3\psi_{212} + \psi_{122} \\
&+\psi_{113} - 3\psi_{2111} + \psi_{1211} + \psi_{1121} + 3\psi_{1112} + 3\psi_{11111}.
\end{align*}

The minimum length part of $K_{(P, \omega)}$ is then 
\begin{align*}
\widetilde{K}_{(P, \omega)}(\mathbf{x}) &= -\psi_{32} -\psi_{23} \\
&=-\psi_3\cdot\psi_2.
\end{align*}

Since $\widetilde{K}_{(P, \omega)}(\mathbf{x})$ is reducible, it follows that $\MaxOne(\widetilde{K}_{(P, \omega)}(\mathbf{x})) = 0$ even though $P$ is connected. However, one can check that $K_{(P, \omega)}$ is itself irreducible (for instance, by considering the coefficients of $\psi_{311}$, $\psi_{131}$, and $\psi_{113}$).
\end{example}

\section{Series-parallel posets}
We will now turn our attention to a collection of naturally labeled posets known as \emph{series-parallel posets}. We will use the results of the previous section to show that distinct series-parallel posets have distinct partition generating functions.

\begin{defn}
	The class $\mathcal{SP}$ of \emph{series-parallel posets} is the smallest collection of posets that satisfies the following:
	\begin{itemize}
		\item the one-element poset $1_\mathcal{P}$ lies in $\mathcal{SP}$;
		\item if $P \in \mathcal{SP}$ and $Q \in \mathcal{SP}$, then $P \sqcup Q \in \mathcal{SP}$; and
		\item if $P \in \mathcal{SP}$ and $Q \in \mathcal{SP}$, then $P \oplus Q \in \mathcal{SP}$. 
	\end{itemize}
\end{defn}
Here, the \emph{disjoint union} $P \sqcup Q$ is the poset on the disjoint union of $P$ and $Q$ with relations $x\preceq y$ if and only if $x \preceq_P y$ or $x \preceq_Q y$, while the \emph{ordinal sum} $P \oplus Q$ is the poset on the disjoint union of $P$ and $Q$ with relations $x\preceq y$ if and only if $x \preceq_P y$, $x \preceq_Q y$, or $x \in P$ and $y \in Q$.  

It is well known that $P \in \mathcal{SP}$ if and only if $P$ is $N$-free \cite{Stanley1}. (A poset is \emph{$N$-free} if there does not exist an induced poset on four elements $\{a, b, c, d\} \subseteq P$ with relations $a \prec b \succ c \prec d$.)

Note that if a series-parallel poset is disconnected, then it can be expressed as the disjoint union of series-parallel posets.  If it is connected (and has more than one element), then it can be expressed as an ordinal sum of series-parallel posets.

In \cite{HasebeTsujie}, Hasebe and Tsujie show that if a poset $P$ is a finite rooted tree, then it can be distinguished by its $P$-partition generating function.  They then ask if the same can be said about series-parallel posets.  We will now give an answer to this question.

\begin{thm}\label{Series Parallel}  Let $P$ be a series-parallel poset. Then $P$ is uniquely determined (up to isomorphism) by $\KP$.
\end{thm}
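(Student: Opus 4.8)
The plan is to argue by strong induction on $n = |P|$, using the irreducibility results of the previous section to dispose of disconnected posets and the graded coproduct $\Delta_\alpha$ to peel apart the ordinal structure of connected ones. The base case $n=1$ is immediate, since $\KP = M_1$ forces $P$ to be the one-element poset. For the inductive step, I would first read off from $\KP$ whether $P$ is connected: by Theorem~\ref{Conn and Irr}, $P$ is connected if and only if $\KP$ is irreducible in $\QSym$, and this is an intrinsic property of $\KP$.

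If $P$ is disconnected, then by Corollary~\ref{Factoring KP} the irreducible factorization of $\KP$ in the unique factorization domain $\QSym$ is $\prod_i K_{P_i}(\x)$, where the $P_i$ are the connected components of $P$. This factorization is determined by $\KP$ alone, and each factor $K_{P_i}(\x)$ is the generating function of a connected series-parallel poset of size less than $n$; by the inductive hypothesis each $P_i$ is determined, so the multiset of components, and hence $P = \bigsqcup_i P_i$, is recovered.

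The substance lies in the connected case, where $P = Q_1 \oplus \cdots \oplus Q_r$ is the maximal ordinal sum of series-indecomposable series-parallel pieces, with $r \geq 2$ and each $|Q_i| < n$. I would first recover the sizes $\alpha := (|Q_1|, \dots, |Q_r|)$ directly from the fundamental expansion $\KP = \sum_{\pi} L_{\co(\pi)}$ of Theorem~\ref{L Expansion}. The key observation is that every linear extension of an ordinal sum is a concatenation of linear extensions of the pieces, and each junction between consecutive pieces is an ascent, so that $\co(\pi)$ is the concatenation $\co(\pi_1)\cdots\co(\pi_r)$. Hence $l(\co(\pi)) \geq r$ always, with equality exactly for the unique all-increasing linear extension, which contributes $L_{(|Q_1|,\dots,|Q_r|)}$. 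Thus the minimal-length term in the $L$-expansion of $\KP$ is the single function $L_{\alpha}$, and reading it off recovers $\alpha$ from $\KP$.

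With $\alpha$ in hand, I would apply the graded coproduct via \eqref{alpha coproduct}. The point is that the only flag of order ideals of $P$ with successive sizes $|Q_1|,\, |Q_1|+|Q_2|,\, \dots$ is $Q_1 \subset Q_1 \oplus Q_2 \subset \cdots \subset P$: since any order ideal of an ordinal sum $A \oplus B$ is either an order ideal of $A$ or all of $A$ together with an order ideal of $B$, an order ideal of size $|Q_1| + \cdots + |Q_i|$ is forced to equal $Q_1 \oplus \cdots \oplus Q_i$. Consequently $\Delta_\alpha(\KP) = K_{Q_1}(\x) \otimes \cdots \otimes K_{Q_r}(\x)$ is a single tensor, from which each $K_{Q_i}(\x)$ is read off, each $Q_i$ determined by induction, and $P = Q_1 \oplus \cdots \oplus Q_r$ reassembled. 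The main obstacle is precisely this connected case: one must simultaneously guarantee that the sizes are genuinely recoverable from $\KP$ (the minimal-length computation) and that $\Delta_\alpha$ isolates exactly the tensor of pieces with no competing terms (the order-ideal flag uniqueness); both hinge on the special behavior of order ideals and linear extensions under ordinal sums, after which the series-parallel recursion drives the induction.
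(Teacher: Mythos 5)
Your proof has the same skeleton as the paper's: induction on $|P|$, the disconnected case dispatched via Theorem~\ref{Conn and Irr} and Corollary~\ref{Factoring KP} exactly as in the paper, and the connected case handled by decomposing $P = Q_1 \oplus \cdots \oplus Q_r$ and recovering the factors $K_{Q_i}(\x)$ --- a step the paper outsources to Lemma 2.7 of \cite{LiuWeselcouch}, and which you attempt to prove directly. Your coproduct computation is correct: an order ideal of size $|Q_1| + \cdots + |Q_i|$ must equal $Q_1 \oplus \cdots \oplus Q_i$, so $\Delta_\alpha(\KP) = K_{Q_1}(\x) \otimes \cdots \otimes K_{Q_r}(\x)$, from which the factors can be read off (up to a scalar normalization that the paper also has to address). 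But the step where you recover $\alpha = (|Q_1|, \dots, |Q_r|)$ from $\KP$ fails.

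The error is the claim that an ascent at each junction makes $\co(\pi)$ the concatenation $\co(\pi_1) \cdots \co(\pi_r)$. It is exactly the opposite: a \emph{descent} at a junction would yield concatenation, while an ascent merges the last run of $\pi_i$ with the first run of $\pi_{i+1}$, so $\co(\pi)$ is the near-concatenation $\co(\pi_1) \odot \cdots \odot \co(\pi_r)$. Hence $l(\co(\pi)) = \sum_i l(\co(\pi_i)) - (r-1)$, and the all-increasing linear extension contributes $L_{(n)}$, not $L_{(|Q_1|, \dots, |Q_r|)}$. Indeed, for \emph{every} naturally labeled poset the minimal-length term of the $L$-expansion is $L_{(n)}$ with coefficient $1$, so it carries no information about the ordinal decomposition; already for a two-element chain your claim predicts minimal term $L_{(1,1)}$, while $\KP = L_{(2)}$. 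The gap is confined to this one step and is repairable inside your framework: the junction positions are precisely the elements of $[n-1]$ lying in no descent set $D(\beta)$ for $\beta$ in the $L$-support of $\KP$. (One direction is clear; conversely, if $p$ is not a junction, pick an order ideal $I$ of size $p$, a maximal $u \in I$ and a minimal $v \in P \setminus I$ that are incomparable --- these exist since $P \neq I \oplus (P \setminus I)$ --- and if $\omega(u) < \omega(v)$ replace $I$ by $(I \setminus \{u\}) \cup \{v\}$; either way one obtains a linear extension with a descent at position $p$.) Reading the junctions off $\KP$ in this way recovers $\alpha$, after which your $\Delta_\alpha$ argument completes the induction.
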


\begin{proof}
We will prove this by induction on the size of $P$.  If $P$ has a single element, then the result holds trivially since there is a unique single-element poset.  Now suppose that the results holds for all series-parallel posets with fewer than $|P|$ elements.

If $P$ is disconnected, then $P = P_1 \sqcup P_2 \sqcup \dots \sqcup P_k$, where each $P_i$ is a connected series-parallel poset.  By Lemma \ref{Factoring KP}, the irreducible factors of $\KP$ are the partition generating functions of the connected components of $P$.  By induction, we can determine the connected components from their partition generating functions.

We will now assume that $P$ is connected and series-parallel.  We can express $P$ as $P = P_1 \oplus P_2 \oplus \dots \oplus P_k$, where each $P_i$ is series-parallel.  By Lemma $2.7$ in \cite{LiuWeselcouch}, we can determine $K_{P_i}(\mathbf{x})$ from $\KP$ for all $i$.  Since each $P_i$ is series-parallel, it follows by induction that it is uniquely determined by $K_{P_i}(\mathbf{x})$.
\end{proof}

It should be noted that Theorem \ref{Series Parallel} does not hold when $\Pomega$ is not naturally labeled.


\begin{example}\label{series parallel example}
The following labeled series-parallel posets have the same partition generating functions.
\[
\begin{tikzpicture}
 [auto,
 vertex/.style={circle,draw=black!100,fill=black!100,thick,inner sep=0pt,minimum size=1mm}]
\node (v1) [draw,circle,minimum size=.45cm,inner sep=0pt] at (0,0) {$1$};
\node (v2) [draw,circle,minimum size=.45cm,inner sep=0pt] at (1,1) {$2$};
\node (v3) [draw,circle,minimum size=.45cm,inner sep=0pt] at (2,0) {$3$};
\draw [-] (v1) to (v2);
\draw [very thick] [-] (v2) to (v3);
\end{tikzpicture}
\hspace{1 cm}
\begin{tikzpicture}
 [auto,
 vertex/.style={circle,draw=black!100,fill=black!100,thick,inner sep=0pt,minimum size=1mm}]
\node (v1) [draw,circle,minimum size=.45cm,inner sep=0pt] at (0,1) {$1$};
\node (v2) [draw,circle,minimum size=.45cm,inner sep=0pt] at (1,0) {$2$};
\node (v3) [draw,circle,minimum size=.45cm,inner sep=0pt] at (2,1) {$3$};
\draw [very thick] [-] (v1) to (v2);
\draw  [-] (v2) to (v3);
\end{tikzpicture}
\]
\end{example}

Although the labeled posets in Example \ref{series parallel example} are not isomorphic, they are both series-parallel.

\begin{quest}
Let $(P, \omega_1)$ and $(Q, \omega_2)$ be labeled posets, and suppose that $P$ is series-parallel.  If $K_{(P, \omega_1)}(\x) = K_{(Q, \omega_2)}(\x)$, does this imply that $Q$ is series-parallel?
\end{quest}

\section{A Combinatorial Description of the Coefficients}
In this section, we will give a combinatorial interpretation for the coefficients in the $\psi_{\alpha}$-expansion of $\KPomega$ akin to the Murnaghan-Nakayama rule.

\subsection{Generalized border-strips}

We will begin by considering the following question: for which labeled posets $(P, \omega)$ does $\MinOne(K_{(P, \omega)}(\x)) \neq 0$? By Lemma~\ref{MaxOne expansion}, this is equivalent to asking when the composition $(n)$ lies in the $\psi$-support of $\KPomega$.

As mentioned earlier, in the fundamental quasisymmetric function basis $\{L_\alpha\}$,
\[
  \MinOne(L_{\alpha}) =
        \begin{cases}
            (-1)^{k} & \text{if $\alpha = (1^k, n-k)$,} \\
            0 & \text{otherwise.}
        \end{cases}
\]
This means that we must only consider linear extensions of $\Pomega$ whose descent set is $\{1, \dots, k\}$ for some value of $k$ (or empty when $k=0$) when computing $\MinOne(\KPomega)$.

\begin{lemma}\label{Avoids natural-strict}
Suppose $(P, \omega)$ is a labeled poset.  If there exists a chain $a \prec b \prec c$ such that $\omega(a) < \omega(b) > \omega(c)$, then $\MinOne(\KPomega) = 0$.
\end{lemma}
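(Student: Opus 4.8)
The plan is to expand $\MinOne(\KPomega)$ over linear extensions using Theorem~\ref{L Expansion} and exploit the very restrictive support of $\MinOne$ on the fundamental basis. By linearity,
\[\MinOne(\KPomega) = \sum_{\pi \in \mathcal{L}\Pomega} \MinOne(L_{\co(\pi)}),\]
and since $\MinOne(L_\alpha) = 0$ unless $\alpha = (1^k, n-k)$, only those $\pi$ with $\co(\pi) = (1^k, n-k)$ for some $k$ contribute. Because $D((1^k, n-k)) = \{1, 2, \dots, k\}$, this is exactly the condition $\des(\pi) = \{1, \dots, k\}$; writing $\pi$ as a reading word $w_1 w_2 \cdots w_n$ (so that $w_i = \pi(i)$), the contributing linear extensions are precisely the \emph{valley} words, i.e. those satisfying $w_1 > w_2 > \cdots > w_{k+1} < w_{k+2} < \cdots < w_n$: strictly decreasing to a single minimum and then strictly increasing.

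The key step is to observe that the hypothesis rules out valley words entirely. Any linear extension of $(P,\omega)$ respects $a \prec b \prec c$, so the labels $\omega(a)$, $\omega(b)$, $\omega(c)$ occur at three positions $i_1 < i_2 < i_3$ in $w$ with $w_{i_1} = \omega(a)$, $w_{i_2} = \omega(b)$, $w_{i_3} = \omega(c)$. The hypothesis $\omega(a) < \omega(b) > \omega(c)$ says that this triple of values forms a peak (up-then-down) pattern. But a valley word can contain no such pattern as a subsequence: restricting a down-then-up unimodal sequence to any three positions yields a triple that is decreasing, increasing, or down-then-up, never up-then-down. Concretely, if $i_2$ lay weakly before the minimum then $w_{i_1} > w_{i_2}$, contradicting $\omega(a) < \omega(b)$, so $i_2$ lies strictly past the minimum; then $i_3 > i_2$ forces $w_{i_2} < w_{i_3}$, contradicting $\omega(b) > \omega(c)$. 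Hence no linear extension of $(P, \omega)$ is a valley word.

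Consequently the sum above is empty and $\MinOne(\KPomega) = 0$. The main point, and the only place that needs care, is the translation of $\MinOne(L_\alpha) \neq 0$ into the unimodal valley structure via $D((1^k, n-k)) = \{1, \dots, k\}$; once this is in hand the conclusion is immediate, and in particular no sign-reversing involution is required, since the hypothesis makes the contributing set of linear extensions vanish outright rather than merely cancel in pairs. I expect the write-up to consist almost entirely of this bookkeeping, with the peak-versus-valley incompatibility being the one substantive observation.
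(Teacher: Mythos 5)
Your proof is correct and follows essentially the same route as the paper: expand $\MinOne(\KPomega)$ over linear extensions via Theorem~\ref{L Expansion}, note that only descent sets of the form $\{1,\dots,k\}$ (valley words) can contribute, and show the chain $a \prec b \prec c$ forces a pattern incompatible with such words. The paper phrases the incompatibility as ``an ascent between $\omega(a)$ and $\omega(b)$ occurring before a descent between $\omega(b)$ and $\omega(c)$'' rather than as a forbidden peak subsequence, but this is the same observation.
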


\begin{proof}
Every linear extension of $\Pomega$ has an ascent at some point between the appearance of $\omega(a)$ and $\omega(b)$, as well as a descent between the appearance of $\omega(b)$ and $\omega(c)$.  Therefore $\KPomega$ has no compositions of the form $\alpha = (1^k, n-k)$ in its $L$-support, so $\MinOne(\KPomega) = 0$.
\end{proof}

We say that a labeled poset $\Pomega$ is a \emph{generalized border-strip} if $\Pomega$ does not contain a chain $a \prec b \prec c$ such that $\omega(a) < \omega(b) > \omega(c)$.  Note that any generalized border-strip has a maximum order ideal $I$ containing only strict edges, in which case $P \setminus I$ is naturally labeled (i.e., contains only natural edges).  The use of the term ``generalized border-strip" will become clear in Section \ref{Murnaghan-Nakayama rule section} when we consider labeled posets that arise from semistandard Young tableaux.

Denote by $J$ the maximal subset of $I$ such that $(P \setminus I) \cup J$ is naturally labeled and $I \setminus J$ is an order ideal. In other words, $J$ is the set of elements $j$ such that the principal order ideal generated by $j$ contains only strict edges, while the principal filter generated by $j$ contains only natural edges. Then $J$ must be a subset of the maximal elements of $I$.  Note that for all $j \in J$ and $x \in P$, if $j$ and $x$ share an edge in the Hasse diagram of $(P, \omega)$, then $\omega(x) > \omega(j)$.  Observe that $J$ is nonempty since the element labeled $1$ (which we denote by $1$) must be a maximal element in $I$, and $(P \setminus I) \cup \{1\}$ is naturally labeled, so $1 \in J$.
\begin{example}
Suppose that $\Pomega$ is the following generalized border-strip.
\[
\begin{tikzpicture}
 [auto,
 vertex/.style={circle,draw=black!100,fill=black!100,thick,inner sep=0pt,minimum size=1mm}]
\node (v1) [draw,circle,minimum size=.45cm,inner sep=0pt] at (0,0) {$7$};
\node (v2) [draw,circle,minimum size=.45cm,inner sep=0pt] at (1,0) {$3$};
\node (v3) [draw,circle,minimum size=.45cm,inner sep=0pt] at (2,0) {$1$};
\node (v4) [draw,circle,minimum size=.45cm,inner sep=0pt] at (0,1) {$6$};
\node (v5) [draw,circle,minimum size=.45cm,inner sep=0pt] at (1,1) {$2$};
\node (v6) [draw,circle,minimum size=.45cm,inner sep=0pt] at (2,1) {$4$};
\node (v7) [draw,circle,minimum size=.45cm,inner sep=0pt] at (1,2) {$5$};
\draw [very thick] [-] (v1) to (v4);
\draw [very thick] [-] (v1) to (v5);
\draw [very thick] [-] (v2) to (v5);
\draw [very thick] [-] (v4) to (v7);
\draw [-] (v3) to (v5);
\draw [-] (v3) to (v6);
\draw [-] (v5) to (v7);
\draw [-] (v6) to (v7);
\end{tikzpicture}
\]
The ideal $I$ has maximal elements $\{1, 3, 6 \}$, while $J = \{1\}$.
\end{example}

This set $J$ can be used to compute the value of $\MinOne(\KPomega)$ for generalized border-strips.

\begin{lemma}\label{Computing MinOne}
Suppose $\Pomega$ is a generalized border-strip, and let $I$ and $J$ be defined as above.  Then 
\[
  \MinOne(\KPomega) =
        \begin{cases}
            (-1)^{|I|-1} & \text{if $J = \{1\}$,} \\
            0 & \text{otherwise.}
        \end{cases}
\]
\end{lemma}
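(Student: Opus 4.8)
The plan is to compute $\MinOne(\KPomega)$ directly from the expansion of $\KPomega$ in the fundamental basis. By Theorem~\ref{L Expansion}, $\KPomega = \sum_{\pi \in \mathcal L\Pomega} L_{\co(\pi)}$, and since $\MinOne(L_\alpha)$ vanishes unless $\alpha = (1^k, n-k)$, only those linear extensions $\pi$ whose descent set is an initial segment $\{1, \dots, k\}$ contribute, each with sign $(-1)^k$. Such a $\pi$ is strictly decreasing and then strictly increasing, so its unique valley occurs at the global minimum value $1$; that is, the element labeled $1$ (which I continue to call $1$, as in the statement) sits at position $k+1$, the elements before it appear in decreasing order of label, and those after it in increasing order. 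Consequently a contributing linear extension is completely determined by the set $S$ of elements appearing before $1$, and $\MinOne(\KPomega) = \sum_S (-1)^{|S|}$, the sum being over all sets $S$ that arise in this way.

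The next step is to characterize these admissible $S$. Writing $T = P \setminus (S \cup \{1\})$, the arrangement ``$S$ decreasing, then $1$, then $T$ increasing'' is a linear extension precisely when: $S$ is an order ideal with $1 \notin S$; every covering edge inside $S$ is strict and every covering edge inside $T$ is natural; and the down-set of $1$ is contained in $S$. Because the union of two order ideals all of whose internal edges are strict is again such an ideal, $I$ is the unique largest order ideal with only strict edges, and the first two conditions force $S \subseteq I$. I would then record two structural facts about generalized border-strips: the lower endpoint of any strict edge lies in $I$ (otherwise a natural edge would sit below a strict edge, violating Lemma~\ref{Avoids natural-strict}), and $1$ is a maximal element of $I$ (nothing can lie above the minimum label through strict edges).

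With these in hand, the crux is to show that the admissible sets are exactly $S = I \setminus (\{1\} \cup A)$ as $A$ ranges over subsets of $J \setminus \{1\}$. Set $F = I \setminus S$, a filter of $I$ containing $1$. Translating ``$T$ has only natural edges'' through the structural facts shows that every element of $F \setminus \{1\}$ must be minimal in $F$, whose only possible strict cover within $I$ would be $1$; since $1$ is maximal in $I$ and the down-set of $1$ avoids $F$, each such element is forced to be maximal in $I$, so $F = \{1\} \cup A$ with $A$ an antichain of maximal elements of $I$. Finally, a maximal element $x$ of $I$ may lie in $A$ exactly when it is not the lower endpoint of an outgoing strict edge, which is precisely the condition that no strict edge lies above $x$, i.e.\ that $x \in J$. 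This gives the bijection $S \leftrightarrow A \subseteq J \setminus \{1\}$ with $|S| = |I| - 1 - |A|$.

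To finish, substitute into the signed sum:
\[
\MinOne(\KPomega) = \sum_{A \subseteq J \setminus \{1\}} (-1)^{|I|-1-|A|} = (-1)^{|I|-1}\sum_{A \subseteq J \setminus \{1\}} (-1)^{|A|}.
\]
The inner sum equals $1$ when $J \setminus \{1\} = \varnothing$, i.e.\ $J = \{1\}$, and $0$ otherwise (toggling a fixed element of $J \setminus \{1\}$ in and out of $A$ is a sign-reversing involution), which is exactly the claimed value. I expect the main obstacle to be the third paragraph: correctly extracting, from the requirement that $T$ contain no strict edge, that the complementary filter $F$ reduces to $\{1\}$ together with a subset of $J$, and in particular handling the strict edges of $P$ that leave $I$, whose lower endpoints must be placed in $S$. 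The reductions in the first two paragraphs are routine given Theorem~\ref{L Expansion} and Lemma~\ref{Avoids natural-strict}.
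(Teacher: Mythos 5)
Your proposal is correct and takes essentially the same route as the paper's proof: both expand $\KPomega$ via Theorem~\ref{L Expansion}, observe that only linear extensions with descent set $\{1,\dots,k\}$ contribute to $\MinOne$, show that such extensions are exactly those listing $I\setminus A$ in decreasing order followed by $(P\setminus I)\cup A$ in increasing order for subsets $A \subseteq J\setminus\{1\}$, and then evaluate the resulting alternating sum $\sum_{A \subseteq J\setminus\{1\}}(-1)^{|I|-|A|-1}$. The only difference is one of detail: the paper simply asserts this parametrization of the contributing linear extensions, whereas you carry out the verification (via the structural facts about $I$, $J$, and the element labeled $1$) explicitly.
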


\begin{proof}
We only need to consider the linear extensions of $\Pomega$ whose descent set is $\{1, \dots, k\}$ for some value of $k$.  
The only linear extensions of $\Pomega$ with a descent set of this form must begin with, for some subset $A \subseteq J \setminus \{1\}$, the elements of $I \setminus A$ in decreasing order and end with the elements of $(P \setminus I ) \cup A$ in increasing order.
Therefore we can express $\MinOne(\KPomega)$ as 
\[ \MinOne(\KPomega) = \sum_{A \subseteq J \setminus \{1\}} (-1)^{|I|-|A|-1}.\]
This value is $0$ unless $J = \{1\}$, in which case $\MinOne(\KPomega) = (-1)^{|I|-1}$.
\end{proof}

Observe that if $P$ is naturally labeled, then $\MinOne(\KP) =1$ if and only if $P$ has a unique minimal element.  (In this case, $I$ and $J$ are both just the set of minimal elements of $P$.)

We say that $\Pomega$ is \emph{rooted} if $\MinOne(\KPomega) \neq 0$. In other words, $\Pomega$ is rooted if it is a generalized border-strip and $J=\{1\}$, that is, $1$ is the unique element whose principal ideal contains only strict edges and whose principal filter contains only natural edges.
\begin{cor} \label{cor:connected}
	If $\Pomega$ is rooted, then it is connected.
\end{cor}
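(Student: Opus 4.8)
The plan is to prove the contrapositive: if $P$ is disconnected, then $(P,\omega)$ is not rooted, i.e., $\MinOne(\KPomega) = 0$. Since rootedness is defined by the condition $\MinOne(\KPomega) \neq 0$, it suffices to show that disconnectedness forces $\MinOne$ to vanish, and this follows almost immediately from results already established.

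First I would invoke Remark~\ref{Disconnect posets}: if $P$ has connected components $P_1, \dots, P_k$ with $k \geq 2$, then $\KPomega = \prod_{i=1}^{k} K_{(P_i, \omega_i)}(\x)$. Grouping the factors, write $\KPomega = f \cdot g$ with $f = K_{(P_1, \omega_1)}(\x)$ and $g = \prod_{i \geq 2} K_{(P_i, \omega_i)}(\x)$. Each $K_{(P_i, \omega_i)}(\x)$ is homogeneous of degree $|P_i| \geq 1$, so both $f$ and $g$ are non-constant and homogeneous. Lemma~\ref{Max One product zero} then gives $\MinOne(f \cdot g) = 0$, hence $\MinOne(\KPomega) = 0$ and $(P,\omega)$ is not rooted. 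Taking the contrapositive finishes the argument. In this approach there is essentially no obstacle: the only point to verify is that both factors are genuinely non-constant, which holds because every component is nonempty.

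Alternatively, and more in keeping with the local combinatorial setup, I would argue directly from the structure of $J$ introduced before Lemma~\ref{Computing MinOne}. A rooted poset is a generalized border-strip with $J = \{1\}$. Since Hasse edges occur only within connected components, both $I$ and $J$ split as disjoint unions over the components of $P$, so it would suffice to show that every connected component contributes at least one element to $J$; a disconnected $P$ would then force $|J| \geq 2$, contradicting $J = \{1\}$. To exhibit a $J$-element inside a component $C$, I would take the element $c \in C$ of smallest label and verify that the principal ideal of $c$ carries only strict edges while its principal filter carries only natural edges. The main obstacle here is precisely this verification: along any chain $x \prec y \preceq c$ or $c \preceq x \prec y$ inside $C$, one must combine the fact that $c$ has the smallest label in $C$ with the generalized border-strip condition (no chain $a \prec b \prec c$ with $\omega(a) < \omega(b) > \omega(c)$) to force the edge between $x$ and $y$ to have the required type. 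This is exactly the local argument the authors already use to show $1 \in J$, now applied within each component, so the functional proof above is the cleaner route.
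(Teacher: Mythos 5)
Your proposal is correct and matches the paper's proof, which gives exactly the same two arguments: the primary one via Remark~\ref{Disconnect posets} and Lemma~\ref{Max One product zero} applied to the factorization over connected components, and the alternative observation that the minimum-label element of each component lies in $J$, forcing $|J| \geq 2$ when $P$ is disconnected. Both of your routes, including the inductive verification that such elements belong to $J$, are sound and essentially identical to the paper's reasoning.
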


\begin{proof}
	If $\Pomega$ is disconnected, then $\KPomega$ is the product of the $\Pomega$-partition generating functions of the connected components of $P$.  By Lemma \ref{Max One product zero}, $\MinOne(\KPomega) = 0$.
	
	Alternatively, the elements with the minimum label in each connected component of $P$ lie in the set $J$.
\end{proof}

\subsection{Enriched $\Pomega$-partitions}
We will now give an alternate combinatorial description of rooted posets in terms of enriched $\Pomega$-partitions. We will then use this to give a combinatorial formula (with signs) for the coefficient of $\psi_\alpha$ in $\KPomega$.

Let $\mathbf{P'}$ be the nonzero integers with the total order
\[ -1<_{\mathbf{P'}} 1 <_{\mathbf{P'}} -2 <_{\mathbf{P'}} 2 <_{\mathbf{P'}} \cdots . \]
For $k \in \mathbf{P'}$, we define the absolute value of $k$ by $|k| = i \in \Z^+$ if $k \in \{-i , i\}$, and we say $k < 0$ if $k = -|k|$, and $k >0$ otherwise.

The following definition is due to Stembridge \cite{Stembridge}.
\begin{defn} 
Let $\Pomega$ be a labeled poset.  An \emph{enriched $\Pomega$-partition} is a map $f \colon P \rightarrow \mathbf{P'}$ such that $|f|$ is surjective onto $[k]$ for some $k$, and if $x$ is covered by $y$, then we have
\begin{enumerate}[(i)]
\item $f(x) \leq_{\mathbf{P'}} f(y)$,
\item if $|f(x)| = |f(y)|$ and $\omega(x) < \omega(y)$, then $f(y) > 0$,
\item if $|f(x)| = |f(y)|$ and $\omega(x) > \omega(y)$, then $f(x) < 0$.
\end{enumerate}
\end{defn}

It should be noted that if $x \prec y \prec z$ is a chain with $\omega(x) < \omega(y) > \omega(z)$, then for all enriched $\Pomega$-partitions $f$, $|f(x)| < |f(z)|$.  It follows that for each $i$, the subposet
$P_i = \{x \in P \mid |f(x)| = i\}$ is a generalized border-strip.

Suppose that $f$ is an enriched $\Pomega$-partition and we know the value of $|f(x)|$ for all $x$. Is this enough information to determine the value of $f(x)$ for all $x$?  The answer to this question is no.

\begin{example}\label{Enriched}
Let $\Pomega$ be the following poset.  Suppose $f$ is an enriched $\Pomega$-partition, and $|f(x)| = 1$ for all $x$.
\[
\begin{tikzpicture}
 [auto,
 vertex/.style={circle,draw=black!100,fill=black!100,thick,inner sep=0pt,minimum size=1mm}]
\node (v1) at ( 0, 0) [vertex, label=left:$-1$] {};
\node (v2) at ( 1,0) [vertex, label=left:$-1$] {};
\node (v3) at (2,0) [vertex, label=right:$\pm 1$] {};
\node (v4) at ( 0,1) [vertex, label=left:$-1$] {};
\node (v5) at (1, 1) [vertex, label=left:$1$] {};
\node (v6) at (2, 1) [vertex, label=right:$1$] {};
\node (v7) at (1, 2) [vertex, label=left:$1$] {};
\draw [very thick] [-] (v1) to (v4);
\draw [very thick] [-] (v1) to (v5);
\draw [very thick] [-] (v2) to (v5);
\draw [very thick] [-] (v4) to (v7);
\draw [-] (v3) to (v5);
\draw [-] (v3) to (v6);
\draw [-] (v5) to (v7);
\draw [-] (v6) to (v7);
\end{tikzpicture}
\]
The element in the bottom right can either be sent to $-1$ or $1$ by $f$.
\end{example}

For each $x\in P$ and enriched $\Pomega$-partition $f$, define the map $f_x \colon P \rightarrow \mathbf{P'}$ by 
\[
  f_x(y) =
        \begin{cases}
            -f(x) & \text{if $ x = y$,} \\
            f(y) & \text{otherwise.}
        \end{cases}
\]
We say that an element $x \in P$ is \emph{ambiguous with respect to $f$} if $f_x$ is still an enriched $\Pomega$-partition.  

Let $\mathbf{P^*} = \mathbf{P'} \cup \{1^*, 2^*, \dots \}$, where each $i^*$ satisfies  $-i <_{\mathbf{P^*}} i^* <_{\mathbf{P^*}} i$ and $|i^*| = i$.  For an enriched $\Pomega$-partition $f$, consider the map $f^* \colon P \rightarrow \mathbf{P^*}$ defined by
\[
f^*(x) = 
	\begin{cases}
	|f(x)|^* & \text{if $x$ is ambiguous with respect to $f$,} \\
	f(x) & \text{otherwise.}
	\end{cases}
\]
The map $f^*$ is called a \emph{starred $\Pomega$-partition}. Note that $f^*$ is uniquely determined by the generalized border-strips $P_i$.


The \emph{ambiguity of $f^*$} is the sequence $\amb(f^*) = (a_1, a_2, \dots, a_k)$, where $a_i = |(f^*)^{-1}(i^*)|$, the number of elements labeled $i^*$ by $f^*$,
The \emph{sign of $f^*$} is
\[\sign(f^*) = (-1)^{|\{x \mid f^*(x) < 0\}|},\]
and the \emph{weight of $f^*$} is 
$\comp(f^*) = (b_1, b_2, \dots, b_k)$,
where $b_i$ is the total number of elements labeled $-i$, $i^*$, or $i$ by $f$.

\begin{example}
Let $f$ be the enriched $\Pomega$-partition from Example \ref{Enriched}. Then the starred $\Pomega$-partition $f^*$ is shown below.
\[
\begin{tikzpicture}
 [auto,
 vertex/.style={circle,draw=black!100,fill=black!100,thick,inner sep=0pt,minimum size=1mm}]
\node (v1) at ( 0, 0) [vertex, label=left:$-1$] {};
\node (v2) at ( 1,0) [vertex, label=left:$-1$] {};
\node (v3) at (2,0) [vertex, label=right:$1^*$] {};
\node (v4) at ( 0,1) [vertex, label=left:$-1$] {};
\node (v5) at (1, 1) [vertex, label=left:$1$] {};
\node (v6) at (2, 1) [vertex, label=right:$1$] {};
\node (v7) at (1, 2) [vertex, label=left:$1$] {};
\draw [very thick] [-] (v1) to (v4);
\draw [very thick] [-] (v1) to (v5);
\draw [very thick] [-] (v2) to (v5);
\draw [very thick] [-] (v4) to (v7);
\draw [-] (v3) to (v5);
\draw [-] (v3) to (v6);
\draw [-] (v5) to (v7);
\draw [-] (v6) to (v7);
\end{tikzpicture}
\]
Here, $\amb(f^*) = (1)$, $\sign(f^*) = (-1)^3$, and $\comp(f^*) = (7)$.
\end{example}




We say that a starred $\Pomega$-partition $f^*$ is a \emph{pointed $\Pomega$-partition} if $\amb(f^*) = (1, 1, \dots, 1)$.  
For instance, if $f \colon P \rightarrow [n]$ is any linear extension of $\Pomega$, then $f^*$ is a pointed $\Pomega$-partition. When $P$ is naturally labeled, $f^*$ will be pointed whenever each subposet $P_i = \{x \in P \mid |f^*(x)| = i\}$ has a unique minimal element, so this agrees with our previous definition of pointed $P$-partitions.

The following lemma is equivalent to Lemma~\ref{Computing MinOne}.
\begin{lemma}\label{Combinatorial Description of MinOne}Let $\Pomega$ be a labeled poset. Then there exists a pointed $\Pomega$-partition $f^*$ with $\comp(f^*) = (n)$ if and only if $\Pomega$ is rooted. In this case, $\MinOne(\KPomega) = \sign(f^*)$; otherwise $\MinOne(\KPomega) = 0$.
\end{lemma}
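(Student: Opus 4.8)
The plan is to set up an explicit dictionary between starred $\Pomega$-partitions $f^*$ with $\comp(f^*)=(n)$ and the structural data $I$ and $J$ of Lemma~\ref{Computing MinOne}, and then read both assertions off from that lemma. First I would observe that $\comp(f^*)=(n)$ forces $k=1$, so $|f(x)|=1$ for every $x$ and the underlying enriched partition is simply a map $f\colon P\to\{-1,1\}$. Unwinding conditions (i)--(iii) in this case shows that $f^{-1}(1)$ must be a filter, that the top of every natural edge must receive $+1$ (by (ii)), and that the bottom of every strict edge must receive $-1$ (by (iii)). A single element is thereby forced to both values exactly when it has a natural edge below it and a strict edge above it, which is precisely a forbidden chain; hence such an $f$ exists if and only if $\Pomega$ is a generalized border-strip. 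When $\Pomega$ is not a generalized border-strip, no $f^*$ with $\comp(f^*)=(n)$ exists and $\MinOne(\KPomega)=0$ by Lemma~\ref{Avoids natural-strict}, handling that case immediately.

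Assuming $\Pomega$ is a generalized border-strip, I would next classify the elements by propagating the forcing through the filter/ideal structure. An element is forced to $+1$ if and only if its principal ideal contains a natural edge, that is, if and only if $x\in P\setminus I$; dually, it is forced to $-1$ if and only if its principal filter contains a strict edge. Consequently an element is ambiguous---meaning $f_x$ is again an enriched partition---exactly when its principal ideal has only strict edges and its principal filter has only natural edges, which is verbatim the defining property of the set $J$. Since $f^*$ is uniquely determined by $P_1=P$, we conclude $\amb(f^*)=(|J|)$, so $f^*$ is pointed (i.e.\ $\amb(f^*)=(1)$) if and only if $|J|=1$; because $1\in J$ always, this holds exactly when $J=\{1\}$, i.e.\ exactly when $\Pomega$ is rooted. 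This establishes the existence claim.

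For the value, note that only elements sent to some $-i$ contribute to the sign, so here $\sign(f^*)=(-1)^{c}$, where $c$ is the number of elements labeled $-1$; the ambiguous ($1^*$) elements do not count. The $-1$'s are exactly the forced-$-1$ elements, and since $I$ is the disjoint union of the forced-$-1$ elements and the ambiguous elements $J$, we obtain $c=|I|-|J|$. When $\Pomega$ is rooted, $J=\{1\}$ gives $c=|I|-1$, so $\sign(f^*)=(-1)^{|I|-1}$, which agrees with $\MinOne(\KPomega)$ as computed in Lemma~\ref{Computing MinOne}; when $\Pomega$ is a generalized border-strip but not rooted, $|J|>1$ shows no pointed $f^*$ exists and $\MinOne(\KPomega)=0$, again matching.

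The step I expect to demand the most care is the dictionary itself: verifying that the local cover conditions (i)--(iii) genuinely globalize to the statement ``$f^{-1}(1)$ is a filter, natural edges force $+1$ above, strict edges force $-1$ below,'' and in particular that the ambiguous elements coincide \emph{exactly} with $J$ rather than merely being contained in it. I would settle this by checking both directions explicitly: each element of $J$ can have its sign flipped, since its only constraints come from strict edges below and natural edges above, neither of which pins down its sign; and each forced element genuinely cannot be flipped without violating (i), (ii), or (iii).
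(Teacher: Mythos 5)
Your proof is correct and follows essentially the same route as the paper's: both arguments dispose of the non-generalized-border-strip case via Lemma~\ref{Avoids natural-strict}, then classify the unique starred partition with $\comp(f^*)=(n)$ by identifying the forced $-1$'s with $I\setminus J$ and the ambiguous elements with $J$, and finally read off both the pointedness criterion and the sign $(-1)^{|I|-1}$ from Lemma~\ref{Computing MinOne}. Your added care in checking that unforced elements are genuinely ambiguous (including the global filter condition (i)) fills in a detail the paper states without verification, but it is the same argument.
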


\begin{proof}
We know from Lemma \ref{Avoids natural-strict} that in order for $\MinOne(\KPomega) \neq 0$, there cannot be a chain $a \prec b \prec c$ with $\omega(a) < \omega(b) > \omega(c)$.  If there are no such chains, then there exists a starred $\Pomega$-partition $f^*$ with $\comp(f^*) = (n)$. Explicitly, $f^*$ labels the elements of $P$ is as follows: if an element lies at the bottom of a strict edge, then it gets sent to $-1$; if it lies at the top of a natural edge, then it gets sent to $1$; otherwise, it gets sent to $1^*$.  There are no elements that lie both at the bottom of a strict edge and at the top of a natural edge.

The elements that get sent to $1^*$ are what we called $J$ in Lemma \ref{Computing MinOne}, and the elements that get sent to $-1$ or $1^*$ are what we called $I$.  Therefore the result holds by Lemma \ref{Computing MinOne}.
\end{proof}



A pointed $(P, \omega)$-partition $f^*$ can be interpreted as a partitioning of $\Pomega$ such that each part is rooted. Specifically, each subposet $P_i = \{x \in P \mid |f^*(x)| = i\}$ is a rooted poset, and the weight of $f^*$ is the composition $(|P_1|, |P_2|, \dots)$. This allows us to give a combinatorial description of the coefficient of $\psi_\alpha$ in $\KPomega$.

\begin{thm}\label{Combinatorial Description of Psi}
Let $\Pomega$ be a labeled poset. Then
\[\KPomega = \sum_{f^*}\sign(f^*)\psi_{\comp(f^*)},\]
where the sum ranges over all pointed $\Pomega$-partitions $f^*$.
\end{thm}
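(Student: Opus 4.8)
The plan is to extract the coefficient of each $\psi_\alpha$ in $\KPomega$ using the functional $\MinOne$ and then match the resulting sum against pointed $\Pomega$-partitions. By Theorem~\ref{coeff of psi alpha}, if $\KPomega = \sum_\alpha c_\alpha \psi_\alpha$ then $c_\alpha = \MinOne^{\otimes l(\alpha)}(\Delta_{\alpha}\KPomega)$. Expanding $\Delta_{\alpha}\KPomega$ via the graded comultiplication formula~\eqref{alpha coproduct} and applying $\MinOne$ to each tensor factor, I would obtain
\[ c_\alpha = \sum \prod_{i=1}^{l(\alpha)} \MinOne\bigl(K_{(P_i, \omega)}(\x)\bigr), \]
where the sum runs over all ordered partitions $(P_1, \dots, P_{l(\alpha)})$ of $P$ with $|P_i| = \alpha_i$ and each $P_1 \cup \cdots \cup P_i$ an order ideal of $P$. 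Summing over all $\alpha$ then recovers the full generating function once the surviving terms are identified combinatorially.

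Next I would apply Lemma~\ref{Combinatorial Description of MinOne} to each factor. It guarantees that $\MinOne(K_{(P_i,\omega)}(\x))$ vanishes unless $(P_i,\omega)$ is rooted, in which case it equals $\sign(f_i^*)$, where $f_i^*$ is the unique pointed $(P_i,\omega)$-partition with $\comp(f_i^*) = (|P_i|)$. Hence only those ordered partitions in which every block $(P_i,\omega)$ is rooted contribute, and each surviving term contributes $\prod_i \sign(f_i^*)$.

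The central step is a weight- and sign-preserving bijection between such ordered partitions and pointed $\Pomega$-partitions of weight $\alpha$. Given an ordered partition $(P_1,\dots,P_l)$ with order-ideal prefixes in which each $(P_i,\omega)$ is rooted, I would glue the rooted labelings by setting $f^*(x) = f_i^*(x)$ (with absolute value $i$) for $x \in P_i$. Conversely, a pointed $\Pomega$-partition $f^*$ determines the blocks $P_i = \{x \in P : |f^*(x)| = i\}$; the prefixes $P_1 \cup \cdots \cup P_i$ are order ideals because $|f^*|$ is monotone along covers, and each block is rooted since $\amb(f^*) = (1,1,\dots,1)$ forces the set $J$ of Lemma~\ref{Computing MinOne} to be a singleton in each block. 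To see the glued map is a genuine enriched (hence starred) $\Pomega$-partition, I would note that for a cover $x \prec y$ in distinct blocks the strict inequality $|f^*(x)| < |f^*(y)|$ holds automatically, so the axioms are nontrivial only within a single block, where they are exactly the conditions already verified for $f_i^*$.

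I expect the main obstacle to be the bookkeeping for the sign, which I would verify as follows. By Lemma~\ref{Computing MinOne}, $\sign(f_i^*) = (-1)^{|I|-1}$ for the block $(P_i,\omega)$, and the elements counted negatively are precisely those $x \in P_i$ with $f^*(x) < 0$ (the starred root value $i^*$ is not negative). Taking the product over all blocks gives
\[ \prod_{i=1}^{l(\alpha)} \sign(f_i^*) = (-1)^{\sum_i(|I|-1)} = (-1)^{|\{x \,:\, f^*(x) < 0\}|} = \sign(f^*), \]
so signs and weights both factor across the blocks. Combining this with the displayed formula for $c_\alpha$ yields $c_\alpha = \sum_{f^*} \sign(f^*)$ over pointed $\Pomega$-partitions with $\comp(f^*) = \alpha$, and summing over $\alpha$ gives $\KPomega = \sum_{f^*} \sign(f^*)\,\psi_{\comp(f^*)}$. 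The quasisymmetric Hopf-algebra machinery does the heavy lifting, so the only genuine work is confirming that gluing valid rooted pieces produces a valid global pointed partition and that the sign is multiplicative.
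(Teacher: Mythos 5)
Your proposal is correct and follows essentially the same route as the paper's proof: extracting $c_\alpha = \MinOne^{\otimes l(\alpha)}(\Delta_\alpha \KPomega)$ via Theorem~\ref{coeff of psi alpha}, expanding with the graded coproduct formula~\eqref{alpha coproduct}, and applying Lemma~\ref{Combinatorial Description of MinOne} blockwise to identify the surviving terms with pointed $\Pomega$-partitions. The gluing bijection and the multiplicativity of the sign, which you verify explicitly, are exactly the steps the paper compresses into ``these can be combined to form a unique pointed $\Pomega$-partition,'' so your writeup is a slightly more detailed version of the same argument.
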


\begin{proof}
Recall that Theorem \ref{coeff of psi alpha} states that the coefficient of $\psi_\alpha$ in $\KPomega$ can be computed by $\MinOne^{\otimes l}(\Delta_{\alpha}\KPomega)$, and that
\[ \Delta_{\alpha}\KPomega = \sum K_{(P_1, \omega)}(\x) \otimes \dots \otimes K_{(P_{l}, \omega)}(\x),\]
where $|P_i| = \alpha_i$, $P_1 \cup \dots \cup P_i$ is an order ideal of $P$, and $P_1, P_2, \dots, P_{l(\alpha)}$ partition $P$. 

Therefore the coefficient of $\psi_\alpha$ in $\KPomega$ is given by 
\[ \MinOne^{\otimes l}(\Delta_{\alpha}\KPomega) = \sum \MinOne(K_{(P_1, \omega)}(\x)) \otimes \dots \otimes \MinOne(K_{(P_l, \omega)}(\x)).\]
By Lemma \ref{Combinatorial Description of MinOne}, a term in this summation is $0$ unless each $(P_i, \omega)$ is rooted, meaning there exists a pointed $(P_i, \omega)$-partition $f_i^*$ with weight $(\alpha_i)$.  These can be combined to form a unique pointed $\Pomega$-partition $f^*$ with $\comp(f^*) = \alpha$.  Since $\MinOne(K_{(P_i, \omega)}(\x)) = \sign(f_i^*)$, 
it follows that $\MinOne(K_{P_1}(\x)) \otimes \dots \otimes \MinOne(K_{P_l}(\x)) = \sign(f^*)$. 
\end{proof}

\begin{example}
Let $\Pomega$ be the following poset.
\[
\begin{tikzpicture}
 [auto,
 vertex/.style={circle,draw=black!100,fill=black!100,thick,inner sep=0pt,minimum size=1mm}]
\node (v1) at ( 0, 0) [vertex] {};
\node (v2) at ( 1,1) [vertex] {};
\node (v3) at (2,0) [vertex] {};
\draw [very thick] [-] (v1) to (v2);
\draw  [-] (v3) to (v2);
\end{tikzpicture}
\]
The following are the pointed $\Pomega$-partitions.
\[
\begin{tikzpicture}
 [auto,
 vertex/.style={circle,draw=black!100,fill=black!100,thick,inner sep=0pt,minimum size=1mm}]
\node (v1) at ( 0, 0) [vertex, label=below:$-1$] {};
\node (v2) at ( 1,1) [vertex, label=above:$1$] {};
\node (v3) at (2,0) [vertex, label=below:$1^*$] {};
\draw [very thick] [-] (v1) to (v2);
\draw  [-] (v3) to (v2);
\end{tikzpicture}
\hspace{.1 cm}
\begin{tikzpicture}
 [auto,
 vertex/.style={circle,draw=black!100,fill=black!100,thick,inner sep=0pt,minimum size=1mm}]
\node (v1) at ( 0, 0) [vertex, label=below:$1^*$] {};
\node (v2) at ( 1,1) [vertex, label=above:$2$] {};
\node (v3) at (2,0) [vertex, label=below:$2^*$] {};
\draw [very thick] [-] (v1) to (v2);
\draw  [-] (v3) to (v2);
\end{tikzpicture}
\hspace{.1 cm}
\begin{tikzpicture}
 [auto,
 vertex/.style={circle,draw=black!100,fill=black!100,thick,inner sep=0pt,minimum size=1mm}]
\node (v1) at ( 0, 0) [vertex, label=below:$-2$] {};
\node (v2) at ( 1,1) [vertex, label=above:$2^*$] {};
\node (v3) at (2,0) [vertex, label=below:$1^*$] {};
\draw [very thick] [-] (v1) to (v2);
\draw  [-] (v3) to (v2);
\end{tikzpicture}
\hspace{.1 cm}
\begin{tikzpicture}
 [auto,
 vertex/.style={circle,draw=black!100,fill=black!100,thick,inner sep=0pt,minimum size=1mm}]
\node (v1) at ( 0, 0) [vertex, label=below:$2^*$] {};
\node (v2) at ( 1,1) [vertex, label=above:$3^*$] {};
\node (v3) at (2,0) [vertex, label=below:$1^*$] {};
\draw [very thick] [-] (v1) to (v2);
\draw  [-] (v3) to (v2);
\end{tikzpicture}
\hspace{.1 cm}
\begin{tikzpicture}
 [auto,
 vertex/.style={circle,draw=black!100,fill=black!100,thick,inner sep=0pt,minimum size=1mm}]
\node (v1) at ( 0, 0) [vertex, label=below:$1^*$] {};
\node (v2) at ( 1,1) [vertex, label=above:$3^*$] {};
\node (v3) at (2,0) [vertex, label=below:$2^*$] {};
\draw [very thick] [-] (v1) to (v2);
\draw  [-] (v3) to (v2);
\end{tikzpicture}
\]
Therefore $\KPomega = -\psi_3 + \psi_{12} - \psi_{12} + 2\psi_{111} = -\psi_3 + 2\psi_{111}$.

\end{example}

Observe that although the labeled poset in the previous example has a pointed $\Pomega$-partition with weight $(1, 2)$, the composition $(1, 2)$ is not in the $\psi$-support of $\KPomega$. 

\begin{quest}
What is the $\psi$-support of $\KPomega$?
\end{quest}

One should note that the combinatorial interpretation given in Theorem \ref{Combinatorial Description of Psi} is consistent with the description given in \cite{AlexanderssonSulzgruber} 
(or Corollary~\ref{Naturally Labeled Expansion} above)
when $\Pomega$ in naturally labeled.  

\subsection{Murnaghan-Nakayama rule}\label{Murnaghan-Nakayama rule section}
In this section, we will compare Theorem~\ref{Combinatorial Description of Psi} to the Murnaghan-Nakayama rule, which expresses a Schur symmetric function in terms of power sum symmetric functions.

We begin with some background on symmetric functions and tableaux. (For more information, see \cite{SaganBook, Stanley2}.)
A \emph{symmetric function} in the variables $x_1, x_2, \dots$ (with coefficients in $\C$) is a formal power series $f(\mathbf{x}) \in \C[[\mathbf{x}]]$ of bounded degree such that, for any composition $\alpha$, the coefficient of $x_1^{\alpha_1}x_2^{\alpha_2}\cdots x_k^{\alpha_k}$ equals the coefficient of $x_{i_1}^{\alpha_1}x_{i_2}^{\alpha_2}\cdots x_{i_k}^{\alpha_k}$ whenever $i_1, i_2, \dots, i_k$ are distinct. We denote the algebra of symmetric functions by $\Lambda$. 
Clearly every symmetric function is also quasisymmetric.

The \emph{power sum symmetric function basis} $\{p_\lambda\}$, indexed by partitions $\lambda$, is given by $p_n = x_1^n +x_2^n + \cdots$ and $p_\lambda = p_{\lambda_1} p_{\lambda_2}\cdots$. In \cite{BallantineEtc}, it was shown that
\[\frac{p_{\lambda}}{z_\lambda} = \sum_{\alpha \sim \lambda}\psi_\alpha,\]
where the sum runs over all compositions $\alpha$ that rearrange to the partition $\lambda$.

For any partition $\lambda$, the \emph{Young diagram} of shape $\lambda$ is a collection of boxes arranged in left-justified rows such that row $i$ has $\lambda_i$ boxes.  If $\lambda$ and $\mu$ are partitions such that $\mu_i \leq \lambda_i$ for all $i$, then the Young diagram of the (skew) shape $\lambda / \mu$ is the set-theoretic difference between the Young diagram of shape $\lambda$ and the Young diagram of shape $\mu$.


A \emph{semistandard (Young) tableau} (SSYT) of (skew) shape $\lambda / \mu$ is a labeling of the boxes of the Young diagram of shape $\lambda / \mu$ such that the entries in the rows are weakly increasing from left to right, and the entries in the columns are strictly increasing from top to bottom.  If $T$ is an SSYT of shape $\lambda / \mu$, then we write $\lambda / \mu = \sh(T)$.  We say that $T$ has \emph{type} $\alpha = (\alpha_1, \alpha_2, \dots)$, denoted $\alpha = \type(T)$, if $T$ has $\alpha_i = \alpha_i(T)$ parts equal to $i$.  For any SSYT $T$ of type $\alpha$, we write $x^T = x_1^{\alpha_1(T)}x_2^{\alpha_2(T)}\cdots$. The \emph{skew Schur function} $s_{\lambda / \mu}$ is the formal power series
\[ s_{\lambda / \mu}(\x) =  \sum_Tx^T,\]
where the sum runs over all SSYT $T$ of shape $\lambda / \mu$. The Schur functions $\{s_\lambda\}$ for partitions $\lambda$ form a basis for $\Lambda$.

\begin{example}
The following is an SSYT of shape $652 / 21$ and type $(3, 3, 3, 1)$.
\[
\ytableausetup
 {mathmode, boxsize=1em}
\begin{ytableau}
\none & \none & 1 & 1 & 2 & 3 \\
 \none & 1 & 2 & 3 & 3 \\
 2& 4 \\
\end{ytableau}
\]
\end{example}

Define $P_{\lm}$ to be the poset whose elements are the squares $(i, j)$ of $\lm$, partially ordered componentwise.  Define a labeling $\omega_{\lm}\colon P_{\lm} \rightarrow [n]$ as follows:  first, the bottom square of the first column of $P_{\lm}$ is labeled $1$.  The labeling then proceeds in order up the first column, then up the second column, and so forth. 

\begin{example} The following is $(P_{\lm}, \omega_{\lm})$ when $\lambda = 652$ and $\mu = 21$.
\[
\ytableausetup
 {mathmode, boxsize=1em, centertableaux}
 \vc{
\begin{ytableau}
\none & \none & 5 & 7 & 9 & 10 \\
 \none & 3 & 4 & 6 & 8 \\
 1& 2 \\
\end{ytableau}
}
\hspace{2 cm}
\vc{
\begin{tikzpicture}
 [auto,
 vertex/.style={circle,draw=black!100,fill=black!100,thick,inner sep=0pt,minimum size=1mm}, scale = .6]
\node (v1) [draw,circle,minimum size=.45cm,inner sep=0pt] at (0,0) {$1$};
\node (v2) [draw,circle,minimum size=.45cm,inner sep=0pt] at (1,1) {$2$};
\node (v3) [draw,circle,minimum size=.45cm,inner sep=0pt] at (2,0) {$3$};
\node (v4) [draw,circle,minimum size=.45cm,inner sep=0pt] at (3,1) {$4$};
\node (v5) [draw,circle,minimum size=.45cm,inner sep=0pt] at (4,0) {$5$};
\node (v6) [draw,circle,minimum size=.45cm,inner sep=0pt] at (4,2) {$6$};
\node (v7) [draw,circle,minimum size=.45cm,inner sep=0pt] at (5,1) {$7$};
\node (v8) [draw,circle,minimum size=.45cm,inner sep=0pt] at (5,3) {$8$};
\node (v9) [draw,circle,minimum size=.45cm,inner sep=0pt] at (6,2) {$9$};
\node (v10) [draw,circle,minimum size=.45cm,inner sep=0pt] at (7,3) {$10$};
\draw [-] (v1) to (v2);
\draw [very thick] [-] (v2) to (v3);
\draw [-] (v3) to (v4);
\draw  [very thick]  [-] (v4) to (v5);
\draw [-] (v4) to (v6);
\draw [-] (v5) to (v7);
\draw  [very thick]  [-] (v6) to (v7);
\draw [-] (v6) to (v8);
\draw [-] (v7) to (v9);
\draw  [very thick]  [-] (v8) to (v9);
\draw [-] (v9) to (v10);
\end{tikzpicture}
}
\]
\end{example}

It follows immediately from the definition of $\KPomega$ that 
\[ K_{(P_{\lm}, \omega_{\lm})}(\x) = \sum_Tx^T = s_{\lm}(\x), \]
where the sum runs over all SSYT $T$ of shape $\lm$.

We can express $s_{\lm}$ in terms of the power sum symmetric function basis using the following combinatorial rule, known as the \emph{Murnaghan-Nakayama rule}.

%

A \emph{border strip} is a connected skew shape with no $2\times2$ square.  Define the \emph{height} $\height(B)$ of a border strip $B$ to be one less than its number of rows.

A \emph{border-strip tableau} of shape $\lambda / \mu$ and type $\alpha = (\alpha_1, \alpha_2, \dots)$ is an assignment of positive integers to the squares of $\lambda / \mu$ such that
\begin{itemize}
\item every row and column is weakly increasing,
\item the integer $i$ appears $\alpha_i$ times, and
\item the set of squares occupied by $i$ forms a border strip.
\end{itemize}
The \emph{height} of a border-strip tableau $T$, denoted $\height(T)$, is the sum of the heights of the border strips that make up $T$.





\begin{thm}[Murnaghan-Nakayama rule]\label{Murnaghan-Nakayama rule}
For partitions $\lambda$, $\mu$, and $\nu$,
\[ s_{\lambda / \mu} = \sum_{\nu}\chi^{\lambda / \mu}(\nu)\frac{p_\nu}{z_\nu}.\]
Here $\chi^{\lambda / \mu}(\nu) = \sum_T(-1)^{\height(T)}$, where the sum ranges over all border-strip tableaux of shape $\lambda / \mu$ and type $\nu$.  
\end{thm}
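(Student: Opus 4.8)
The plan is to specialize Theorem~\ref{Combinatorial Description of Psi} to the labeled poset $(P_{\lm},\omega_{\lm})$. Since $K_{(P_{\lm},\omega_{\lm})}(\x) = s_{\lm}(\x)$, this gives
\[ s_{\lm} = \sum_{f^*} \sign(f^*)\,\psi_{\comp(f^*)}, \]
where $f^*$ ranges over all pointed $(P_{\lm},\omega_{\lm})$-partitions. Because $s_{\lm}$ is symmetric, its $\psi$-coefficients must be constant on each rearrangement class $\{\alpha : \alpha \sim \nu\}$, so grouping these classes and applying $p_\nu/z_\nu = \sum_{\alpha\sim\nu}\psi_\alpha$ rewrites the right-hand side as $\sum_\nu c_\nu\, p_\nu/z_\nu$, where $c_\nu$ is the signed count of pointed partitions of any one fixed weight $\alpha \sim \nu$. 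It then remains to identify $c_\nu$ with $\chi^{\lm}(\nu)$, that is, to match pointed $(P_{\lm},\omega_{\lm})$-partitions against border-strip tableaux with matching signs.

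First I would record the local structure of $(P_{\lm},\omega_{\lm})$: since $\omega_{\lm}$ labels cells column by column from the bottom up, every vertical cover is a strict edge and every horizontal cover is a natural edge. A pointed $\Pomega$-partition is exactly a partition of $P_{\lm}$ into subposets $P_i = \{x : |f^*(x)| = i\}$ with each $P_1\cup\cdots\cup P_i$ an order ideal and each $P_i$ rooted. The order-ideal condition says precisely that the filling of $\lm$ by $|f^*|$ is weakly increasing along rows and columns, which are the first two conditions defining a border-strip tableau. For the third, I would show that a connected subset $B$ of $P_{\lm}$ is a generalized border-strip if and only if it contains no $2\times 2$ block: any such block yields a chain $(i,j) \prec (i,j+1) \prec (i+1,j+1)$ of natural-then-strict type, and conversely the no-$2\times2$ condition rules out exactly such chains. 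Together with Corollary~\ref{cor:connected}, this shows that the rooted pieces $P_i$ are precisely border strips (connected ribbons), so pointed partitions of weight $\alpha$ correspond bijectively to border-strip tableaux of type $\alpha$.

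The remaining, and main, step is the sign. Since $\sign(f^*) = \prod_i \sign(f_i^*)$, where $f_i^*$ is the restriction to the border strip $B = P_i$, it suffices to check $\sign(f_B^*) = (-1)^{\height(B)}$ for a single ribbon. By the proof of Lemma~\ref{Combinatorial Description of MinOne}, $\sign(f_B^*)$ is $(-1)$ raised to the number of cells of $B$ lying at the bottom (in poset order) of a strict edge, i.e. the number of vertical covers in $B$; equivalently it is $(-1)^{|I|-1}$ by Lemma~\ref{Computing MinOne}. In a ribbon, consecutive rows overlap in exactly one column, so the number of vertical covers is one less than the number of rows, which is exactly $\height(B)$. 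Hence $\sign(f_B^*) = (-1)^{\height(B)}$, and taking the product over all strips gives $\sign(f^*) = (-1)^{\height(T)}$ for the corresponding border-strip tableau $T$. The hard part is this per-strip bookkeeping: carefully confirming that each ``bottom of a strict edge'' cell is counted once and that their number equals $\mathrm{rows}(B)-1$ (using that a border strip contains no $2\times2$). Once this is done, summing over all pointed partitions of each fixed weight yields $c_\nu = \chi^{\lm}(\nu)$, and the theorem follows.
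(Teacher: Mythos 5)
Your proposal retraces the paper's own derivation: the paper states the Murnaghan--Nakayama rule as a classical result, but then re-proves it (in refined, $\psi_\alpha$ form) exactly as you do, by specializing Theorem~\ref{Combinatorial Description of Psi} to $(P_{\lm},\omega_{\lm})$ --- this is Proposition~\ref{MN Rule} and the corollary following it. Most of your steps are sound: vertical covers are strict edges and horizontal covers are natural edges; the order-ideal condition is the weak row/column increase; a $2\times 2$ block is equivalent (for connected, convex pieces) to a chain $a \prec b \prec c$ with $\omega(a)<\omega(b)>\omega(c)$; the sign count $\#\{\text{vertical covers}\}=\mathrm{rows}(B)-1=\height(B)$ is correct; and your symmetry argument for grouping $\psi_\alpha$ into $p_\nu/z_\nu$ is a fine substitute for the paper's citation that $\chi^{\lm}(\nu)$ is independent of the order of the parts of $\nu$.

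There is, however, one genuine gap, in the bijection. ``Rooted'' is strictly stronger than ``connected generalized border-strip'': by the paper's definition, $\Pomega$ is rooted when it is a generalized border-strip \emph{and} $J=\{1\}$, and in general a connected generalized border-strip can have $|J|>1$ (take two incomparable minimal elements each covered by a common third element via natural edges; then both minimal elements lie in $J$ and $\MinOne$ vanishes). Your second paragraph proves rooted $\Rightarrow$ border strip, but the converse --- which is what surjectivity requires, i.e.\ that \emph{every} border-strip tableau lifts to a pointed $\Pomega$-partition --- is only argued up to ``generalized border-strip,'' and your third paragraph then applies Lemma~\ref{Computing MinOne} in its nonzero case, which silently presupposes $J=\{1\}$. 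If some ribbon piece failed to be rooted, the corresponding tableaux would be counted by $\chi^{\lm}(\nu)$ but not by the $\psi$-coefficient, and the identity would fail; so this must be checked. The paper supplies it in Proposition~\ref{MN Rule} by identifying $I$ as the leftmost boxes of the rows and $J$ as the single southwest corner box. The fix is short and uses the overlap property you already invoke: in a ribbon, a cell that is not leftmost in its row has a cell to its left, hence a natural edge in its principal ideal; a leftmost cell not in the bottom row sits directly above a cell of the ribbon (consecutive rows overlap in exactly one column), hence has a strict edge in its principal filter; therefore $J$ consists solely of the southwest corner, which is precisely the cell of minimal label. With that verification added, your argument is complete and agrees with the paper's.
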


When $\mu = \varnothing$, Theorem \ref{Murnaghan-Nakayama rule} gives a change of basis formula for expressing Schur functions in terms of the power sum symmetric functions.  

Since the quasisymmetric power sums refine the symmetric power sums, the Murnaghan-Nakayama rule also gives a description of the $\psi_\alpha$-expansion of $s_{\lambda/\mu}$. Since $\chi^{\lambda/\mu}(\nu)$ also does not depend on the order of the parts of $\nu$ \cite{Stanley2}, we find that the Murnaghan-Nakayama rule is equivalent to
\[ s_{\lambda / \mu} = \sum_{\alpha}\chi^{\lambda / \mu}(\alpha)\psi_\alpha,\]
where the sum ranges over all compositions $\alpha$.

In fact, this description agrees with the one obtained by Theorem~\ref{Combinatorial Description of Psi} above as we will next demonstrate.

\begin{prop}\label{MN Rule}
When $\Pomega = (P_{\lm}, \omega_{\lm})$, $\MinOne(\KPomega) = 0$ unless $\lm$ is a border strip, in which case $\MinOne(\KPomega) = (-1)^{\height(\lambda/\mu)}$.
%
\end{prop}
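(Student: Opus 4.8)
The plan is to read off the edge structure of $(P_{\lm}, \omega_{\lm})$ and then invoke Lemma~\ref{Computing MinOne}. First I would record how $\omega_{\lm}$ orders the cells: since the labeling proceeds up the first column, then up the second, and so on, one has $\omega_{\lm}(i,j) < \omega_{\lm}(i',j')$ precisely when $j < j'$, or $j = j'$ and $i > i'$. Hence every horizontal cover $(i,j) \lessdot (i,j+1)$ is a natural edge and every vertical cover $(i,j) \lessdot (i+1,j)$ is a strict edge. In the terminology preceding Lemma~\ref{Computing MinOne}, a cell is the top of a natural edge exactly when it has a left-neighbor in $\lm$, and the bottom of a strict edge exactly when it has a neighbor immediately below it.

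Next I would dispose of the two ways $\lm$ can fail to be a border strip. If $\lm$ is disconnected, then so is $P_{\lm}$ (adjacency in the Hasse diagram coincides with covers between geometrically adjacent cells), so $\MinOne(\KPomega)=0$ by Remark~\ref{Disconnect posets} together with Lemma~\ref{Max One product zero} (equivalently, by Corollary~\ref{cor:connected}). If $\lm$ contains a $2\times2$ square in rows $i,i+1$ and columns $j,j+1$, then $(i,j)\prec(i,j+1)\prec(i+1,j+1)$ is a chain with $\omega_{\lm}(i,j)<\omega_{\lm}(i,j+1)>\omega_{\lm}(i+1,j+1)$, so $\MinOne(\KPomega)=0$ by Lemma~\ref{Avoids natural-strict}.

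The main case is when $\lm$ is a border strip, that is, connected with no $2\times2$ square. Here I would first check that $(P_{\lm},\omega_{\lm})$ is a generalized border-strip, i.e.\ has no chain $a\prec b\prec c$ with $\omega_{\lm}(a)<\omega_{\lm}(b)>\omega_{\lm}(c)$. Unwinding the edge structure above, such a chain would force $b$ and $c$ to lie in a common column with $c$ below $b$, and $a$ to lie weakly above and strictly to the left of $b$. But in a border strip consecutive rows overlap in exactly one column, which forces that shared column to be the leftmost cell of $b$'s row; the monotonicity of $\mu$ then forbids any cell of $\lm$ strictly to the left of it in a row weakly above $b$, a contradiction. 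Thus $(P_{\lm},\omega_{\lm})$ is a generalized border-strip. I would then identify the sets $I$ and $J$: the set $I$ consists of the cells with no left-neighbor, namely the leftmost cell of each (contiguous) row, so $|I|$ equals the number of rows of $\lm$, which is $\height(\lm)+1$; and $J$ consists of the cells with neither a left- nor a down-neighbor. The one-column-overlap property shows that the leftmost cell of every non-bottom row has a cell directly below it, so the only element of $J$ is the bottom-left cell of $\lm$, which is precisely the cell labeled $1$. Hence $J=\{1\}$, so $(P_{\lm},\omega_{\lm})$ is rooted, and Lemma~\ref{Computing MinOne} gives $\MinOne(\KPomega)=(-1)^{|I|-1}=(-1)^{\height(\lm)}$.

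The only genuinely non-formal step is the generalized border-strip verification in the last paragraph, and within it the subcase where the witnessing cell $a$ lies several rows above $b$ rather than in $b$'s own row. Handling that cleanly is exactly where the overlap-exactly-one property of a border strip must be combined with the monotonicity of $\mu$, so I expect that to be the main obstacle; everything else is a direct translation into the already-established framework of Lemma~\ref{Computing MinOne}.
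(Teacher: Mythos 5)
Your proof is correct and takes essentially the same route as the paper: dispose of the disconnected and $2\times 2$ cases via Corollary~\ref{cor:connected} and Lemma~\ref{Avoids natural-strict}, then identify $I$ as the leftmost cells of the rows and $J$ as the southwest corner and apply Lemma~\ref{Computing MinOne}. The only difference is that you explicitly verify that a border strip yields a generalized border-strip poset (and the natural/strict edge structure), a step the paper leaves implicit; this is a welcome bit of added rigor, not a deviation in approach.
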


\begin{proof}
If $\lm$ contains a $2 \times 2$ square, then $(P_{\lm}, \omega_{\lm})$ contains a chain $a \prec b \prec c$ with $\omega(a) < \omega(b) > \omega(c)$.  It follows from Lemma \ref{Avoids natural-strict} that $\MinOne(\KPomega) = 0$.
Similarly, if $(P_{\lm}, \omega_{\lm})$ is disconnected, then by Corollary~\ref{cor:connected}, we have $\MinOne(\KPomega) = 0$.
Therefore the only way to have $\MinOne(\KPomega) \neq 0$ is if $\lm$ is connected and contains no $2 \times 2$ square, that is, if it is a border strip.

Using the terminology of Lemma~\ref{Computing MinOne}, when $\lm$ is a border strip, the elements of $(P_{\lm}, \omega_{\lm})$ that lie in $I$ are the leftmost boxes in the rows of the Young diagram with shape $\lm$.  Therefore $|I|$ is equal to the number of rows of $\lm$.  The only element of $J$ is the box in the southwest corner. By Lemma \ref{Computing MinOne}, we have that $\MinOne(\KPomega) = (-1)^{\height(\lambda/\mu)}$.
\end{proof}


\begin{example}
The following is $(P_{\lm}, \omega_{\lm})$ when $\lambda = 6332$ and $\mu = 221$.
\[
 \vc{
\begin{ytableau}
\none & \none & 6 & 7 & 8 & 9 \\
\none & \none & 5 \\
 \none & 3 & 4 \\
 1& 2 \\
\end{ytableau}
}
\hspace{2 cm}
\vc{
\begin{tikzpicture}
 [auto,
 vertex/.style={circle,draw=black!100,fill=black!100,thick,inner sep=0pt,minimum size=1mm}, scale = .6]
\node (v1) [draw,circle,minimum size=.45cm,inner sep=0pt] at (0,0) {$1$};
\node (v2) [draw,circle,minimum size=.45cm,inner sep=0pt] at (1,1) {$2$};
\node (v3) [draw,circle,minimum size=.45cm,inner sep=0pt] at (2,0) {$3$};
\node (v4) [draw,circle,minimum size=.45cm,inner sep=0pt] at (3,1) {$4$};
\node (v5) [draw,circle,minimum size=.45cm,inner sep=0pt] at (4,0) {$5$};
\node (v7) [draw,circle,minimum size=.45cm,inner sep=0pt] at (5,-1) {$6$};
\node (v9) [draw,circle,minimum size=.45cm,inner sep=0pt] at (6,0) {$7$};
\node (v10) [draw,circle,minimum size=.45cm,inner sep=0pt] at (7,1) {$8$};
\node (v11) [draw,circle,minimum size=.45cm,inner sep=0pt] at (8,2) {$9$};
\draw [-] (v1) to (v2);
\draw [very thick] [-] (v2) to (v3);
\draw [-] (v3) to (v4);
\draw  [very thick]  [-] (v4) to (v5);
\draw [very thick] [-] (v5) to (v7);
\draw [-] (v7) to (v9);
\draw [-] (v9) to (v10);
\draw [-] (v10) to (v11);
\end{tikzpicture}
}
\]
Since $\lm$ is a border strip, it follows that $\MinOne(K_{(P_{\lm}, \omega_{\lm})}(\x)) = (-1)^3$.  We see that  $I = \{1, 3, 5, 6\}$ and $J = \{1\}$. 
\end{example}

\begin{cor}
	For partitions $\lambda$ and $\mu$ and compositions $\alpha$,
	\[ s_{\lambda / \mu} = \sum_{\alpha}\chi^{\lambda / \mu}(\alpha)\psi_\alpha.\]
	Here $\chi^{\lambda / \mu}(\alpha) = \sum_T(-1)^{\height(T)}$, where the sum ranges over all border-strip tableaux of shape $\lambda / \mu$ and type $\alpha$.  
\end{cor}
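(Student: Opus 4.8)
The plan is to specialize Theorem~\ref{Combinatorial Description of Psi} to the poset $(P_{\lm}, \omega_{\lm})$, whose generating function is $s_{\lm}$, and then to match the resulting pointed $(P_{\lm}, \omega_{\lm})$-partitions with border-strip tableaux using Proposition~\ref{MN Rule}. By Theorem~\ref{Combinatorial Description of Psi}, the coefficient of $\psi_\alpha$ in $s_{\lm} = K_{(P_{\lm}, \omega_{\lm})}(\x)$ is $\sum_{f^*} \sign(f^*)$, where the sum ranges over all pointed $(P_{\lm}, \omega_{\lm})$-partitions $f^*$ with $\comp(f^*) = \alpha$. So it suffices to exhibit a sign-compatible bijection between such $f^*$ and the border-strip tableaux $T$ of shape $\lm$ and type $\alpha$, under which $\sign(f^*) = (-1)^{\height(T)}$.

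First I would define the map $f^* \mapsto T$ by $T(x) = |f^*(x)|$ for each box $x$. Recall that a pointed $\Pomega$-partition decomposes $P$ into the convex pieces $P_i = \{x : |f^*(x)| = i\}$, each of which is rooted, with $P_1 \cup \cdots \cup P_i$ an order ideal for every $i$ and $\comp(f^*)_i = |P_i|$. I would then check that $T$ is a genuine filling of $\lm$ of the correct type with rows and columns weakly increasing: the type condition is immediate since $\type(T)_i = |P_i| = \comp(f^*)_i = \alpha_i$, and the monotonicity follows because $|f^*|$ is order-preserving for the componentwise order of boxes, which is exactly the statement that $T$ weakly increases along rows and down columns.

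The key step is to show that each $P_i$ occupies a border strip and to track the sign. Because $P_1 \cup \cdots \cup P_{i-1}$ and $P_1 \cup \cdots \cup P_i$ are order ideals, $P_i$ is a convex subposet, hence a skew subshape; moreover the labeling $\omega_{\lm}$ restricted to $P_i$ induces on every covering pair the same relative order as the canonical labeling $\omega_{P_i}$ of that standalone skew shape (both proceed up columns from left to right, so within a column labels increase upward and every label in an earlier column is smaller). Thus $K_{(P_i, \omega_{\lm})}(\x) = s_{P_i}(\x)$, and the rootedness of $P_i$ together with Proposition~\ref{MN Rule} forces $P_i$ to be a border strip with $\MinOne\bigl(K_{(P_i,\omega)}(\x)\bigr) = (-1)^{\height(P_i)}$. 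By Lemma~\ref{Combinatorial Description of MinOne}, the restriction $f^*|_{P_i}$ is the unique pointed $(P_i,\omega)$-partition of weight $(|P_i|)$ and satisfies $\sign(f^*|_{P_i}) = \MinOne\bigl(K_{(P_i,\omega)}(\x)\bigr) = (-1)^{\height(P_i)}$. Since both the sign and the height factor over the pieces, $\sign(f^*) = \prod_i (-1)^{\height(P_i)} = (-1)^{\height(T)}$, so $T$ is a border-strip tableau with the claimed sign.

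Finally I would verify that the map is a bijection: conversely, a border-strip tableau $T$ of shape $\lm$ and type $\alpha$ decomposes $\lm$ into border strips $P_i$, each rooted by Proposition~\ref{MN Rule}, and Lemma~\ref{Combinatorial Description of MinOne} then determines $f^*$ uniquely on each $P_i$, producing the unique pointed partition with $T = |f^*|$. Summing $\sign(f^*) = (-1)^{\height(T)}$ over all such $T$ yields $\chi^{\lm}(\alpha)$, as desired. I expect the one genuinely delicate point to be the claim that the induced labeling on each convex piece $P_i$ agrees, up to the relative order of covers, with the canonical labeling of the corresponding skew shape, since this is precisely what allows Proposition~\ref{MN Rule} to be applied piece by piece; everything else is a direct assembly of results already established.
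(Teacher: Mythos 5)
Your proof is correct and follows essentially the same route as the paper: the paper likewise specializes Theorem~\ref{Combinatorial Description of Psi} to $(P_{\lm}, \omega_{\lm})$ and invokes Proposition~\ref{MN Rule} to identify pointed $(P_{\lm}, \omega_{\lm})$-partitions $f^*$ with border-strip tableaux $T$, matching $\wt(f^*) = \type(T)$ and $\sign(f^*) = (-1)^{\height(T)}$. The only difference is that you spell out the details the paper leaves implicit (convexity of the pieces, agreement of the restricted labeling with the canonical labeling of each piece, multiplicativity of the sign, and the inverse map), all of which check out.
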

\begin{proof}
	By Proposition~\ref{MN Rule}, a pointed $(P_{\lambda/\mu}, \omega_{\lambda/\mu})$-partition $f^*$ is equivalent to a border-strip tableau $T$ of shape $\lambda/\mu$. In this case, $\wt(f^*) = \type(T)$ and $\sign(f^*) = (-1)^{\height(T)}$, so the result follows immediately from Theorem~\ref{Combinatorial Description of Psi}.
\end{proof}

\begin{example}
Let $\lambda = (6, 4, 4, 4, 2)$ and $\mu = (3, 2, 1, 1)$.  The following is a border strip tableau of shape $\lm$ and the pointed $(P_{\lm}, \omega_{\lm})$-partition $f^*$ that corresponds to it.
\[
 \vc{
\begin{ytableau}
\none & \none & \none & 2 & 2 & 2 \\
\none & \none & 1 & 2 \\
\none & 1 & 1& 2\\
 \none & 1 & 3 & 3 \\
 4& 4 \\
\end{ytableau}
}
\hspace{1 cm}
\vc{
\begin{tikzpicture}
 [auto,
 vertex/.style={circle,draw=black!100,fill=black!100,thick,inner sep=0pt,minimum size=1mm}, scale = .7]
\node (v1) at ( 0, 0) [vertex, label=left:$4^*$] {};
\node (v2) at ( 1, 1) [vertex, label=above:$4$] {};
\node (v3) at ( 2, 0) [vertex, label=below:$1^*$] {};
\node (v4) at (3, -1) [vertex, label=below:$-1$] {};
\node (v5) at (3, 1) [vertex, label=above:$3^*$] {};
\node (v6) at (4, 0) [vertex, label=above:$1$] {};
\node (v7) at (4, 2) [vertex, label=above:$3$] {};
\node (v8) at (5, -1) [vertex, label=below:$-1$] {};
\node (v9) at (5, 1) [vertex, label=right:$2^*$] {};
\node (v10) at (6, 0) [vertex, label=right:$-2$] {};
\node (v11) at (7, -1) [vertex, label=below:$-2$] {};
\node (v12) at (8, 0) [vertex, label=right:$2$] {};
\node (v13) at (9, 1) [vertex, label=right:$2$] {};
\draw [-] (v1) to (v2);
\draw [very thick] [-] (v2) to (v3);
\draw [very thick] [-] (v3) to (v4);
\draw [-] (v3) to (v5);
\draw [-] (v4) to (v6);
\draw [very thick] [-] (v5) to (v6);
\draw [-] (v5) to (v7);
\draw [very thick] [-] (v6) to (v8);
\draw [-] (v6) to (v9);
\draw [very thick] [-] (v7) to (v9);
\draw [-] (v8) to (v10);
\draw [very thick] [-] (v9) to (v10);
\draw [very thick] [-] (v10) to (v11);
\draw [-] (v11) to (v12);
\draw [-] (v12) to (v13);
\end{tikzpicture}
}
\]
\end{example}

\section{Acknowledgments}

The authors would like to thank Peter McNamara and Robin Sulzgruber for useful conversations.

\bibliographystyle{plain}
\bibliography{sample}

\end{document}